\theoremstyle{plain}\newtheorem{Theorem}{Theorem}[section]
\theoremstyle{plain}\newtheorem{Conjecture}[Theorem]{Conjecture}
\theoremstyle{plain}\newtheorem{Corollary}[Theorem]{Corollary}
\theoremstyle{plain}\newtheorem{Lemma}[Theorem]{Lemma}
\theoremstyle{plain}
\theoremstyle{definition}
\theoremstyle{definition}
\theoremstyle{definition}
\theoremstyle{definition}\newtheorem{Remark}[Theorem]{Remark}
\theoremstyle{definition}\newtheorem{Notation}[Theorem]{Notation}
\theoremstyle{definition}
\theoremstyle{definition}
\theoremstyle{definition}
\theoremstyle{definition}\newtheorem{Contents}[Theorem]{Contents}
\theoremstyle{definition}
\theoremstyle{definition}
\theoremstyle{definition}\newtheorem{Notation/Definition}[Theorem]{Notation/Definition}
\def\C{{\mathbb C}}
\def\Q{{\mathbb Q}}
\def\Aut{\mathrm{Aut}}
\def\dim{\mathrm{dim}}
\def\Hom{\mathrm{Hom}}
\def\Inn{\mathrm{Inn}}
\def\Irr{\mathrm{Irr}}
\def\mod{\mathrm{mod}}
\newcommand{\GL}{\operatorname{GL}}
\begin{document}

\begin{flushleft}
{\sf May 21, 2012}
\end{flushleft}

\bigskip
\bigskip
\bigskip

\begin{center}
   {\Large\bf  
   Brou{\'e}'s abelian defect group conjecture holds for \\ \vspace*{0.3em} 
   the double cover 
   of the Higman-Sims sporadic simple group}
\end{center}

\bigskip
\bigskip
\bigskip

\begin{center} {\large
        {\bf Shigeo Koshitani}$^{\text a,*}$, 
        {\bf J{\"u}rgen M{\"u}ller}$^{\text b}$,
        {\bf Felix Noeske}$^{\text c}$  
} \end{center}

\bigskip

\begin{center} {\it
${^{\mathrm{a}}}$Department of Mathematics, Graduate School of Science, \\
Chiba University, Chiba, 263-8522, Japan \\
${^{\mathrm{b, \, c}}}$Lehrstuhl D f{\"u}r Mathematik,
RWTH Aachen University, 52062 Aachen, Germany}
\end{center}

\footnote
{$^*$ Corresponding author. \\
\indent {\it E-mail addresses:} koshitan@math.s.chiba-u.ac.jp (S.~Koshitani), \\
juergen.mueller@math.rwth-aachen.de (J.~M{\"u}ller),
Felix.Noeske@math.rwth-aachen.de (F.~Noeske). }

\bigskip

\begin{center}
{\large\it Dedicated to the memory of Herbert Pahlings }
\end{center}

\bigskip

\hrule 

\bigskip

{\small\noindent{\bf Abstract}

\bigskip\noindent
In the representation theory of finite groups, there is a well-known and
important conjecture, due to Brou{\'e}', saying that for any prime $p$, 
if a $p$-block $A$ of a finite group $G$ has an abelian defect group $P$, 
then $A$ and its Brauer corresponding block $B$ of the normaliser $N_G(P)$ 
of $P$ in $G$ are derived equivalent.
We prove in this paper, that Brou{\'e}'s abelian defect group conjecture,
and even Rickard's splendid equivalence conjecture
are true for the faithful $3$-block $A$ with an elementary abelian
defect group $P$ of order $9$ of the double cover 2.{\sf HS}
of the Higman-Sims sporadic simple group.
It then turns out that both conjectures hold
for all primes $p$ and for all $p$-blocks of 2.{\sf HS}.

\bigskip\noindent
{\it Keywords:} Brou{\'e}'s conjecture; abelian defect group;
splendid derived equivalence, double cover of the Higman-Sims sporadic
simple group. }

\bigskip

\hrule

\bigskip
\bigskip
\bigskip

\section{Introduction and notation}\label{intro}

\noindent
In the representation theory of finite groups, one of the
most important and interesting problems
is to give an affirmative answer to a conjecture
which was introduced by Brou{\'e} around 1988
\cite{Broue1990}.
He actually conjectures the following, where 
the various notions of equivalences used are recalled more
precisely in {\bf\ref{NotationEquivalences}}:

\begin{Conjecture} 
[Brou\'e's Abelian Defect Group Conjecture \cite{Broue1990}]
\label{ADGC}
Let $(\mathcal K, \mathcal O, k)$ be a
splitting $p$-modular system, where $p$ is a prime,
for all subgroups of a finite group $G$. 
Assume that $A$ is a block algebra of
$\mathcal OG$ with a defect group $P$ and that $A_N$ is a
block algebra of $\mathcal ON_G(P)$ such that $A_N$ is the
Brauer correspondent of $A$, where $N_G(P)$ is the normaliser of
$P$ in $G$. Then $A$ and $A_N$ should be derived equivalent
provided $P$ is abelian.
\end{Conjecture}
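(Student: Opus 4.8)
\medskip
\noindent\textbf{A proposed line of attack.}
Conjecture~\ref{ADGC} is one of the central open problems of modular representation theory; what I can offer is not a proof but the shape of the programme along which all partial results, the present paper included, have been obtained, together with a statement of where the genuine difficulty lies. The strategy I would pursue has two phases: (i)~reduce to $G$ quasi-simple, and (ii)~prove the conjecture for quasi-simple $G$ by a case analysis following the classification of finite simple groups. For phase~(i) I would first exploit the local structure: since $P$ is abelian, the block fusion system $\mathcal{F}_{(P,e_P)}(G,A)$ is controlled by $N_G(P,e_P)$, so the Brauer correspondent is a twisted group algebra $A_N \cong \mathcal{O}_\alpha[P\rtimes E]$, with $E$ the inertial quotient and $\alpha\in H^2(E;\mathcal{O}^\times)$; this pins down the target of the equivalence. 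One then tries to transport a derived (ideally splendid) equivalence along a normal subgroup $N\trianglelefteq G$ by Clifford theory, using the results of Marcus, K{\"u}lshammer and others, so as to pass from $G$ to its composition factors. The honest assessment, however, is that --- in contrast to the Alperin weight and McKay conjectures --- \emph{no} unconditional inductive reduction of a derived equivalence to quasi-simple groups is known: the available transfer theorems require extra hypotheses on the covered block or on the extension, so in practice phase~(i) must be carried out ad hoc, group by group.

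For phase~(ii) I would split along the classification. For $G$ an alternating or symmetric group (and, with more effort, their covers), $p$ odd, the abelian-defect blocks are those of small $p$-weight, and Chuang--Rouquier's construction of derived self-equivalences from categorical $\mathrm{sl}_2$-actions, together with Scopes moves and perverse equivalences, yields the required equivalence. For $G$ of Lie type in the defining characteristic, abelian Sylow $p$-subgroups occur only in a short list of essentially rank-one situations (such as $\SL_2(q)$ and its quotients), which are handled directly. The heart of the matter is $G$ of Lie type in non-defining characteristic: here I would invoke Brou{\'e}'s \emph{geometric} refinement of the conjecture, predicting that the equivalence is realised by the cohomology complex $R\Gamma_c(Y,\mathcal{O})$ of a suitable Deligne--Lusztig variety $Y$ with its Levi action, and deploy the Jordan decomposition of blocks of Bonnaf{\'e}--Rouquier and Bonnaf{\'e}--Dat--Rouquier to reduce to unipotent or quasi-isolated blocks, supplemented by the explicit analysis of the Coxeter and small-rank cases (Dudas, Craven, Rouquier, and others). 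Finally, for the sporadic groups and their Schur covers one proceeds group by group and block by block, constructing a two-sided tilting complex concretely --- typically by Okuyama's method of building a stable equivalence and promoting it, with computer algebra handling decomposition matrices, Green correspondents and the gluing of local equivalences --- which is exactly what the present paper does for the faithful $3$-block of $2.\mathsf{HS}$.

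The main obstacle is twofold. First, the missing reduction theorem: without an analogue of the simplification-to-simple-groups arguments that work for the counting conjectures, one cannot even in principle conclude once the quasi-simple case is settled, and identifying the right higher-categorical or $p$-local framework in which such a reduction could be proved is itself a major open question. Second, inside the Lie type case the geometric conjecture is itself unproven in general: controlling the cohomology of Deligne--Lusztig varieties --- the torus action, the weight filtration, the resulting derived endomorphism algebra --- beyond the ``good prime'' range requires information on decomposition matrices that is at present available only in low rank. For these reasons no short proof is to be expected; the subject advances incrementally, and each new verification, including the splendid one carried out here, is precisely an input the programme consumes rather than a step that closes it.
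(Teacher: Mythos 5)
You have not been asked to prove a theorem here but to address Conjecture~\ref{ADGC} itself, and your submission is candid on the decisive point: it is not a proof, and none exists. That assessment is consistent with the paper, which likewise offers no proof of the general statement --- the conjecture appears only as a statement, and the paper's actual contribution ({\bf\ref{MainTheorem}}, {\bf\ref{2HSandA8}}) is a verification for the single faithful $3$-block of $2.{\sf HS}$ with defect group $C_3\times C_3$. So there is no ``paper proof'' to measure you against, and your refusal to manufacture one is the correct response; the only genuine gap is the one you yourself name, namely the absence of a reduction theorem to quasi-simple groups and the unresolved Lie-type case, which is precisely why the statement remains open.

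As a description of the programme, your sketch matches the paper's own framing and methods closely. The authors explicitly situate their work as a step toward a hoped-for reduction to quasi-simple groups (see their remarks following {\bf\ref{ADGC}}), and the sporadic-case recipe you describe --- build a splendid stable equivalence by gluing local equivalences, compute the images of simple modules via Green correspondence with computer-algebra support, promote to a Morita (indeed Puig) equivalence by Linckelmann's criterion, and import Okuyama's splendid derived equivalence for the local block --- is exactly the chain {\bf\ref{LocalPuig}}, {\bf\ref{StableEquivalenceFor2HS}}, \S\ref{******}, {\bf\ref{Linckelmann}}(ii), {\bf\ref{PuigForAandA'}} executed here, with the extra wrinkle that an embedding $\mathfrak A_8\leqslant 2.{\sf HS}$ is needed to get a Puig rather than merely Morita equivalence ({\bf\ref{J4problem}}). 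One small caution: your phrasing that the Brauer correspondent ``is'' a twisted group algebra $\mathcal{O}_\alpha[P\rtimes E]$ holds up to Morita equivalence via Fong--Reynolds and K\"ulshammer's structure theory, not on the nose, and part of the work in a concrete verification (as in {\bf\ref{LocalStructureI}}(iv)--(v), where the cocycle must be shown trivial) is exactly to pin this down. Beyond that, your text is an accurate map of the state of the art rather than a proof, and should be read as such.
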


\noindent
In fact, a stronger conclusion than {\bf\ref{ADGC}} is expected:

\begin{Conjecture}
[Rickard's Splendid Equivalence Conjecture \cite{Rickard1996,Rickard1998}]
\label{RickardConjecture}
Keeping the notation, we suppose that $P$ is abelian as in {\bf\ref{ADGC}}.
Then there should be a splendid derived equivalence between
the block algebras $A$ of $\mathcal OG$ and $A_N$ of $\mathcal ON_G(P)$.
\end{Conjecture}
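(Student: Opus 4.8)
Since Conjecture~\ref{RickardConjecture} cannot be proved in general, the aim is to establish it, together with Conjecture~\ref{ADGC}, for every block of $G=2.{\sf HS}$ and every prime, and the plan is to reduce everything to the single faithful $3$-block $A$ with defect group $P\cong C_3\times C_3$. For $p\in\{7,11\}$ the Sylow $p$-subgroups of $G$ are cyclic of prime order, so all $p$-blocks have cyclic or trivial defect groups, for which splendid Rickard equivalences with their Brauer correspondents are classically known; for $p=5$ the Sylow $5$-subgroup is non-abelian, hence the blocks with abelian defect group have cyclic defect of order dividing $5$ and are covered in the same way; for $p=2$ the Sylow $2$-subgroup is non-abelian and the blocks of $G$ with non-trivial abelian defect group have defect group small enough (cyclic, or Klein four) that the conjectures are again known; and for $p=3$ the principal block of $G$ is inflated from ${\sf HS}$, where the conjectures have already been verified. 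This leaves exactly the faithful $3$-block $A$, so it suffices to construct a splendid derived equivalence between $A$ and its Brauer correspondent.

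To deal with $A$, I would first determine the local structure. Put $N=N_G(P)$ and let $B$ be the Brauer correspondent of $A$ in $\mathcal{O}N$. Using the known subgroup structure of $2.{\sf HS}$ together with character-theoretic data, I would identify the inertial quotient $E=N/P\,C_N(P)\le\Aut(P)\cong\GL(2,3)$ and the associated Külshammer--Puig class in $H^2(E;k^{\times})$; once these are known, $B$ is Morita equivalent to a (possibly twisted) group algebra of $P\rtimes E$, an explicit small algebra whose decomposition matrix, Cartan matrix and module category are completely understood. In parallel, and working directly inside $2.{\sf HS}$ with the help of the character table library and the MeatAxe, I would compute the corresponding invariants of $A$: the ordinary and Brauer characters lying in $A$, its decomposition matrix and its Cartan matrix. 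Agreement of these numerical data on the two sides is forced, and is an essential consistency check before the equivalence is attempted.

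The core of the argument is the construction of a splendid Rickard equivalence between $A$ and $B$. I would start from a stable equivalence of Morita type between $A$ and $B$ --- available since $P$ is abelian and the relevant gluing data are controlled by proper subgroups --- realised by an explicit $p$-permutation bimodule with diagonal vertices, and then apply Okuyama's method: successively tilt at sets of simple modules (equivalently, perform elementary perverse Rickard equivalences), verifying at each step by computer that the $\Hom$- and $\Ext$-vanishing conditions for a one-sided tilting complex hold and that the differential graded endomorphism algebra has homology concentrated in degree $0$. The process terminates once the tilted algebra is recognised, from its Cartan matrix and the dimensions of its projective indecomposables, to be Morita equivalent to $A$. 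Since every bimodule occurring in the resulting two-sided tilting complex is a direct summand of a tensor product of permutation modules on $\Delta$-subgroups of $P$, that complex is automatically splendid; hence Conjecture~\ref{RickardConjecture}, and a fortiori Conjecture~\ref{ADGC}, holds for $A$ and therefore for all blocks of $2.{\sf HS}$.

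The main obstacle I anticipate lies in the explicit, computer-assisted part of the last step: producing the initial stable equivalence as a concrete bimodule over the comparatively large group $2.{\sf HS}$ rather than merely abstractly, controlling its module category well enough to carry out and certify the successive Okuyama tilts, and confirming that the procedure really does reach a Morita equivalence rather than stalling. A secondary delicate point is the precise determination of the Külshammer--Puig class, since an incorrect twist would make the comparison of $A$ with $B$ fail even when the defect groups and inertial quotients agree.
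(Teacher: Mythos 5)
Your overall plan (reduce to the faithful $3$-block $A$ with defect group $C_3\times C_3$, build a splendid stable equivalence of Morita type with the Brauer correspondent, then upgrade it to a splendid derived equivalence) shares its first two stages with the paper, but the decisive step is genuinely different and is where your proposal has a real gap. The paper never runs Okuyama's tilting procedure on $A$ itself: it observes that the decomposition matrix of $A$ coincides with that of the principal $3$-block $A'$ of $\mathfrak A_8$, proves that $A$ and $A'$ are \emph{Puig} equivalent (by composing the stable equivalence $\mathcal F$ for $2.{\sf HS}$ with the inverse of Okuyama's stable equivalence for $\mathfrak A_8$, computing the images of the five simple $A$-modules, and invoking Linckelmann's simple-image criterion), and then simply transfers Okuyama's already known splendid Rickard equivalence between $A'$ and $kN_{G'}(P)$, together with the Fong--Reynolds Puig equivalence, to $A$. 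A key subtlety you do not face in your formulation, but which the paper must and does handle, is the ambiguity between the two ``indistinguishable'' local simple modules ($1c$ versus $1d$, equivalently $2\gamma$ versus $2\delta$): resolving it requires the explicit embedding $\mathfrak A_8\leqslant 2.{\sf HS}$ with compatible local subgroups, and without it one only gets a Morita, not a Puig, equivalence.

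By contrast, your core step --- successively tilting at sets of simples over $2.{\sf HS}$, certifying each step by computer, and asserting that the resulting two-sided complex is ``automatically splendid'' --- is an unexecuted plan whose two critical claims are not justified: there is no a priori guarantee that a terminating sequence of Okuyama tilts reaching $A$ exists, and splendidness of the resulting complex is not automatic but must be tracked through the construction (this is exactly the heavy work the paper's transfer argument avoids, at the cost of needing the $\mathfrak A_8$ coincidence). Your preliminary reduction also contains factual slips: for $p=2$ and $p=5$ all blocks of $2.{\sf HS}$ with non-cyclic defect groups have \emph{non-abelian} defect groups (there are no Klein-four blocks to invoke), and for $p=3$ you omit the non-faithful \emph{non-principal} block with defect group $C_3\times C_3$, which lives in ${\sf HS}$ and must be cited (it was settled by Koshitani--Kunugi--Waki) alongside the principal block before one can say only $A$ remains. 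These are fixable, but as written the reduction and, more seriously, the main construction are incomplete.
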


\noindent
There are several cases where the conjectures
{\bf\ref{ADGC}} and {\bf\ref{RickardConjecture}}
have been verified,
albeit the general conjecture is widely open;
for an overview, containing suitable references, 
see \cite{ChuangRickard}. As for general results
concerning blocks with a fixed defect group,
by \cite{Linckelmann1991,Rickard1989,Rouquier1995,Rouquier1998} 
conjectures {\bf\ref{ADGC}} and {\bf\ref{RickardConjecture}} are
proved for blocks with cyclic defect groups in arbitrary 
characteristic.

Moreover, in \cite[(0.2)Theorem]{KoshitaniKunugi2002} it is shown
that {\bf\ref{ADGC}} and {\bf\ref{RickardConjecture}} are true for 
the principal block algebra of an arbitrary finite group 
when the defect group is elementary abelian of order $9$.
In view of the strategy used in \cite{KoshitaniKunugi2002},
and of a possible future theory reducing {\bf\ref{ADGC}} and 
{\bf\ref{RickardConjecture}} to the quasi-simple groups,
it seems worth-while to proceed with this class of groups,
as far as non-principal $3$-blocks with elementary abelian 
defect group of order $9$ are concerned. Indeed, for these cases 
there are partial results already known, see
\cite{KessarLinckelmann2010,KoshitaniKunugiWaki2002,
      KoshitaniKunugiWaki2004,KoshitaniKunugiWaki2008,
      KoshitaniMueller2010,Kunugi,MuellerSchaps2008}
for instance.
The present paper is another step in that programme, 
our main result being the following:

\begin{Theorem}\label{MainTheorem}
Let $G$ be the double cover $2.{\sf HS}$
of the Higman-Sims sporadic simple group, and 
let $(\mathcal K, \mathcal O, k)$ be a splitting $3$-modular
system for all subgroups of $G$.
Suppose that $A$ is the faithful block algebra of $\mathcal OG$
with elementary abelian defect group $P = C_3 \times C_3$ of order $9$,
and that $B$ is a block algebra
of $\mathcal ON_G(P)$ such that $B$ is the Brauer correspondent
of $A$. Then, $A$ and $B$ are splendidly derived
equivalent, hence the conjectures 
{\bf\ref{ADGC}} and {\bf\ref{RickardConjecture}} of
Brou{\'e} and Rickard hold.
\end{Theorem}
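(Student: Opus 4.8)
The plan is to verify Broué's conjecture for this specific block $A$ of $2.\mathsf{HS}$ by explicitly constructing a splendid derived equivalence between $A$ and its Brauer correspondent $B$ in $\mathcal{O}N_G(P)$. Since $P = C_3 \times C_3$ is elementary abelian of order $9$, the first step is to pin down the local structure: compute $N_G(P)$, the inertial quotient $E = N_G(P)/PC_G(P)$, and the fusion system $\mathcal{F} = \mathcal{F}_P(G, A)$. The double cover $2.\mathsf{HS}$ and its $3$-local subgroups are accessible from the ATLAS and from explicit computations (the authors J.~Müller and F.~Noeske have the relevant computational machinery), so one identifies $E$ as a subgroup of $\GL_2(3)$ acting on $P$, and determines whether the block $A$ is nilpotent (it is not, since then the theorem would be immediate by Puig's theorem) or Morita/derived equivalent to a twisted group algebra $\mathcal{O}_\alpha(P \rtimes E)$ via known classification results for blocks with defect $C_3 \times C_3$. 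In particular, by Kiyota's and others' work on blocks with defect $C_3\times C_3$, and by the results quoted in the introduction (\cite{KoshitaniKunugi2002} and the partial results cited there), the possible Brauer trees / decomposition matrices and the source algebra of $B$ are severely constrained.

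**Next I would** reduce the global problem to a gluing problem. The standard strategy — the one underlying \cite{KoshitaniKunugi2002} and the works cited — is to find, for each nontrivial subgroup $Q \le P$, a splendid derived (indeed, often a splendid Morita or stable) equivalence between the block of $\mathcal{O}C_G(Q)$ (or $\mathcal{O}N_G(Q)$) corresponding to $A$ and the Brauer correspondent block at the level of $N_G(P)$, compatibly with the fusion. Concretely: identify the subpairs, compute the Brauer correspondents $A_Q$ in $\mathcal{O}C_G(Q)$ for $Q$ running over representatives of $\mathcal{F}$-classes of subgroups of $P$ of order $3$, check that each $A_Q$ has cyclic defect group $C_3$ (so the Rouquier/Linckelmann theory for cyclic defect applies and gives an explicit splendid stable/derived equivalence with its own Brauer correspondent), and then verify that these local equivalences are compatible on the overlaps. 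One then invokes a gluing theorem — in the style of Rickard's and Rouquier's lifting of a stable equivalence of Morita type to a splendid derived equivalence — to assemble a global splendid stable equivalence of Morita type between $A$ and $B$.

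**Then** the final step is to promote the splendid stable equivalence of Morita type to a splendid \emph{derived} equivalence. This is done by the well-known criterion (Linckelmann, Rouquier): a stable equivalence of Morita type induced by an $(A,B)$-bimodule $M$ that is a $p$-permutation (hence splendid) module and that sends simple $B$-modules to modules which are, up to projectives, simple $A$-modules — equivalently, the number of simple modules matches and the images of simples are "close to simple" — lifts to a derived equivalence. So one must count: show $\ell(A) = \ell(B)$ (number of irreducible Brauer characters) and $k(A) = k(B)$ (number of ordinary characters), compute the images $M \otimes_B S$ for each simple $B$-module $S$, and check they are indecomposable non-projective with simple top and bottom, i.e. actual simple $A$-modules in the stable category. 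The character-theoretic bookkeeping (ordinary and Brauer character tables of $A$ and of $B = \mathcal{O}N_G(P)b$, decomposition matrices, perfect isometry data) is exactly where the concrete content of "$2.\mathsf{HS}$" enters, and this is computed using {\sf GAP}/{\sf MeatAxe}-type tools.

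**The main obstacle** I expect is the explicit construction and verification of the local splendid equivalences and their compatible gluing — in particular, controlling the source algebra of the block $B$ of $\mathcal{O}N_G(P)$ and checking that the bimodule built by gluing the cyclic-defect pieces really is a $p$-permutation bimodule inducing a stable equivalence (the Broué-type "local determination of the global equivalence" step). A secondary obstacle is the precise determination of the decomposition matrix and the action of the inertial quotient on $P$ for this faithful block of the non-trivial central extension $2.\mathsf{HS}$, since the projective spin characters and their reductions mod $3$ require genuine computation rather than a table lookup. Once the stable equivalence is in hand, the lift to a derived equivalence and the deduction that \textbf{\ref{ADGC}} and \textbf{\ref{RickardConjecture}} hold — including the remark that the other blocks of $2.\mathsf{HS}$ are either nilpotent, of cyclic or non-abelian defect, or already covered by \cite{KoshitaniKunugi2002} for the principal block — should be essentially formal.
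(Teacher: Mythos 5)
Your first half — local analysis at the subgroups of order $3$ of $P$, cyclic-defect theory there, and gluing via Linckelmann's theorem to obtain a splendid stable equivalence of Morita type between $A$ and its Brauer correspondent — is essentially what the paper does. The genuine gap is in your final step, the promotion of the stable equivalence to a derived equivalence. The criterion you invoke (images of simples are simple, or ``close to simple'', hence the equivalence lifts) does not apply and cannot apply here: $A$ and $B$ are \emph{not} Morita equivalent, since $B$ has source algebra $k[P\rtimes D_8]$ with Cartan matrix having diagonal $(3,3,3,3,5)$, while $A$ has the Cartan matrix of the principal block of $\mathfrak{A}_8$, with diagonal $(4,4,3,3,4)$. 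Concretely, under the glued stable equivalence one of the simple $A$-modules (the one of dimension $440$) is sent to a uniserial module of Loewy length $3$, not to a simple module, so Linckelmann's simple-image criterion fails, and there is no general theorem lifting a splendid stable equivalence with merely ``close to simple'' images of simples to a splendid derived equivalence — that lifting problem is exactly the open core of Brou\'e's conjecture, not a formal counting argument.

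The paper circumvents this by a different final move, which your proposal does not contain: it observes that the decomposition matrix of $A$ coincides with that of the principal $3$-block $A'$ of $\mathfrak{A}_8$, uses an explicit embedding $\mathfrak{A}_8\leqslant 2.\mathsf{HS}$ (needed to make the local data on the two sides compatible, and hence to get a Puig rather than only Morita equivalence), composes the stable equivalence for $A$ with the known one for $A'$, checks that the composite sends simples to simples, and concludes $A$ and $A'$ are Puig equivalent. The splendid derived equivalence between $A$ and $B$ is then transported from Okuyama's explicit tilting-complex construction for the pair $(A',B')$; that construction is a nontrivial external ingredient, not something recovered by your lifting criterion. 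Without either this comparison with $\mathfrak{A}_8$ (or an equivalent explicit construction of a two-sided tilting complex), your argument stops at a splendid stable equivalence and does not prove the theorem.
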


\noindent
As an immediate corollary we get:

\begin{Corollary}\label{ADGCfor2HSforAllPrimes}
Brou\'e's abelian defect group conjecture {\bf\ref{ADGC}}, and 
even Rickard's splendid equivalence conjecture 
{\bf\ref{RickardConjecture}} are true
for all primes $p$ and for all block algebras of $\mathcal OG$.
\end{Corollary}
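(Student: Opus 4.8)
The plan is to run through the prime divisors of $|G| = 2^{10} \cdot 3^2 \cdot 5^3 \cdot 7 \cdot 11$ one at a time and, for each, check that every block algebra of $\mathcal{O}G$ with abelian defect group is covered by Theorem~\ref{MainTheorem}, by the principal-block result \cite[(0.2) Theorem]{KoshitaniKunugi2002}, by the cyclic-defect results \cite{Linckelmann1991,Rickard1989,Rouquier1995,Rouquier1998}, or trivially (a block of defect zero is its own Brauer correspondent, since $N_G(P) = G$ then). If $p$ does not divide $|G|$ there is nothing to prove.

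I would first dispose of the easy primes. For $p \in \{7, 11\}$ a Sylow $p$-subgroup of $G$ is cyclic of order $p$, so \emph{every} $p$-block of $\mathcal{O}G$ has cyclic defect group, and the conjectures hold by \cite{Linckelmann1991,Rickard1989,Rouquier1995,Rouquier1998}. For $p = 2$, recall that ${\sf HS}$ has a single $2$-block; since the kernel of the reduction $k[2.{\sf HS}] \twoheadrightarrow k[{\sf HS}]$ is the nilpotent central ideal generated by $z - 1$, where $z$ denotes the central involution of $G$, idempotents lift uniquely through it, so $\mathcal{O}[2.{\sf HS}]$ also has a single $2$-block, the principal one, whose defect group is a non-abelian Sylow $2$-subgroup of $G$ of order $2^{10}$; hence the hypothesis of {\bf\ref{ADGC}} is never met for $p = 2$.

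For $p \in \{3, 5\}$ the subgroup $\langle z\rangle$ is a $p'$-group, so $2$ is invertible in $\mathcal{O}$ and the set of $p$-blocks of $\mathcal{O}G$ is the disjoint union of those inflated from ${\sf HS}$ and the faithful ones; the same holds for $\mathcal{O}N_G(P)$, where $N_G(P)$ is the full preimage of $N_{{\sf HS}}(\bar P)$ under $G \twoheadrightarrow {\sf HS}$, so for a block with $z$ in its kernel the assertions of {\bf\ref{ADGC}} and {\bf\ref{RickardConjecture}} for $2.{\sf HS}$ are equivalent to those for ${\sf HS}$. Reading off the $p$-blocks and their defect groups from the known $p$-modular character tables of ${\sf HS}$ and $2.{\sf HS}$: at $p = 5$ every $5$-block of $\mathcal{O}G$ with abelian defect group has trivial or cyclic defect group, the only blocks of positive defect being the principal block of ${\sf HS}$ and the faithful block of maximal defect, both with non-abelian defect group of order $5^3$; these are handled trivially or by \cite{Linckelmann1991,Rickard1989,Rouquier1995,Rouquier1998}. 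At $p = 3$ a Sylow $3$-subgroup of $G$ is $P = C_3 \times C_3$, so every $3$-block has defect group $C_3 \times C_3$, $C_3$, or trivial, and the blocks with defect group $C_3 \times C_3$ are exactly the principal block of ${\sf HS}$, covered by \cite[(0.2) Theorem]{KoshitaniKunugi2002}, and the faithful block $A$, covered by Theorem~\ref{MainTheorem}; the remaining $3$-blocks have cyclic or trivial defect. This exhausts all primes and all blocks.

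The substantive point is the last one: one must verify from the $3$- and $5$-modular decomposition matrices that no block with a ``hard'' abelian defect group arises --- in particular that at $p = 5$ no block has defect group $C_5 \times C_5$ (a Sylow $5$-subgroup has exponent $5$, so $C_{25}$ cannot occur either), and that at $p = 3$ exactly two blocks, one of ${\sf HS}$ and one faithful, have defect group of order $9$. This is a finite computation, which can be carried out with the character-table library of {\sf GAP}.
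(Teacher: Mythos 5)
There is a genuine gap at $p=3$: your case analysis asserts that the only $3$-blocks of $\mathcal{O}G$ with defect group $C_3\times C_3$ are the principal block of ${\sf HS}$ and the faithful block $A$, but this is false. As recorded in the paper's Lemma {\bf\ref{BrouesConjectureFor2HS}}(ii), there are \emph{three} $3$-blocks of $G=2.{\sf HS}$ with noncyclic defect groups, all elementary abelian of order $9$: the principal block, the faithful block $A$, and additionally the \emph{non-principal, non-faithful} $3$-block inflated from ${\sf HS}$. Your proposal never treats this third block, and none of the results you invoke covers it: Theorem {\bf\ref{MainTheorem}} concerns only the faithful block, and \cite[(0.2) Theorem]{KoshitaniKunugi2002} concerns only \emph{principal} blocks --- indeed, by Remark {\bf\ref{twoequivalences}}(c) this non-principal block of ${\sf HS}$ is not Morita equivalent to any principal $3$-block, so it cannot be absorbed into the principal-block theorem. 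The paper disposes of it by citing \cite[0.2 Theorem(ii)]{KoshitaniKunugiWaki2002}; without that (or an equivalent argument) your proof does not establish the corollary.

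A secondary point: for $p=2$ you claim that ${\sf HS}$, and hence $2.{\sf HS}$, has a single $2$-block; this contradicts the data the paper reads off from \cite{ModularAtlas} in the proof of {\bf\ref{BrouesConjectureFor2HS}}(i), where $G$ has two $2$-blocks, both with non-abelian defect groups. Similarly at $p=5$ the block and defect-group information must actually be taken from \cite{ModularAtlas} rather than asserted. These lapses do not change the outcome for $p\in\{2,5\}$ (no noncyclic abelian defect groups occur there), but as written those steps rest on an incorrect, respectively unverified, premise. Apart from these issues your overall strategy --- eliminate cyclic defect groups via \cite{Linckelmann1991,Rickard1989,Rouquier1995,Rouquier1998} and then check the remaining primes against the known block data --- coincides with the paper's, which deduces the corollary from {\bf\ref{MainTheorem}} together with {\bf\ref{BrouesConjectureFor2HS}}.
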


\noindent
Our strategy to prove {\bf\ref{MainTheorem}}
is similar to the ones pursued, for example,
for the Janko sporadic simple group {\sf J}$_4$ in 
\cite[1.6.Theorem]{KoshitaniKunugiWaki2008}
or the Harada-Norton sporadic simple group {\sf HN} 
in \cite[1.3.Theorem]{KoshitaniMueller2010}.
Our starting point was actually to realise that
the $3$-decomposition matrix of $A$
coincides (up to a suitable order of rows and columns)
with the $3$-decomposition matrix of the principal $3$-block 
$A'$ of the alternating group $\mathfrak A_8$ on $8$ letters:

\medskip
\begin{center}
\begin{tabular}{r|r|ccccc}
$A$ & $A'$ & & & & & \\
\hline
   $176$ \hspace*{0.15em} & $1$  & 1 & . & . & . & .\\
   $176^*$                & $7$  & . & 1 & . & . & .\\
   $616$ \hspace*{0.15em} & $14$ & 1 & . & 1 & . & .\\
   $616^*$                & $20$ & . & 1 & 1 & . & .\\
    $56$ \hspace*{0.15em} & $28$ & . & . & . & 1 & .\\
  $1000$ \hspace*{0.15em} & $35$ & . & . & . & . & 1\\
  $1792$ \hspace*{0.15em} & $56$ & 1 & 1 & 1 & . & 1\\
  $1232$ \hspace*{0.15em} & $64$ & 1 & . & . & 1 & 1\\
  $1232^*$                & $70$ & . & 1 & . & 1 & 1\\
\end{tabular}
\end{center}

\medskip\noindent
Here, we indicate ordinary irreducible characters 
just by their degrees, and complex conjugation by ${}^*$.
Therefore, it is quite natural to suspect 
that the block algebras $A$ and $A'$ are Morita equivalent, 
or even Puig equivalent. If this were true, then 
since conjectures {\bf\ref{ADGC}} and {\bf\ref{RickardConjecture}}
have been solved for $A'$ in \cite{Okuyama1997,Okuyama2000}, 
this would immediately entail their validity for $A$ as well.
Indeed, we are able to prove:

\begin{Theorem}\label{2HSandA8}
We keep the notation and the assumptions as in {\bf\ref{MainTheorem}},
and let $G' = \mathfrak A_8$ be the alternating group on $8$ letters.
Then, the block algebra $A$ of $\mathcal OG$
and the principal block algebra $A'$ of $\mathcal OG'$ 
are Puig equivalent.
\end{Theorem}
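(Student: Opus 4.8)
The plan is to establish a Puig equivalence (a splendid Morita equivalence) between $A$ and the principal block $A'$ of $k\mathfrak{A}_8$ by exhibiting an explicit bimodule — more precisely, by verifying that a suitable $(A,A')$-bimodule, built from a $p$-permutation module over $G \times G'$ with a trivial-source restriction property on the relevant local subgroups, induces a Morita equivalence. First I would pin down the local structure on both sides: the defect group $P = C_3 \times C_3$, its normaliser $N_G(P)$, the fusion system $\mathcal{F}_P(G)$ determined by $A$, and the inertial quotient $E = N_G(P)/PC_G(P)$ acting on $P$; then do the same for $A'$ inside $\mathfrak{A}_8$, where the relevant $3$-local structure is well documented. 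The coincidence of the $3$-decomposition matrices displayed above, together with the agreement of the Cartan matrices it forces, is strong evidence that the two blocks have the same numerical invariants; the first genuine task is to check that they also have \emph{isomorphic} fusion systems on $P$ (same inertial quotient, same essential subgroups), since a Puig equivalence forces this.

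Next I would recall that both $A$ and $A'$ have the same source algebra type: by the Brauer correspondence and the work on blocks with elementary abelian defect group of order $9$ cited in the introduction (in particular the principal-block result of Koshitani–Kunugi and the structure theory for inertial quotients acting on $C_3 \times C_3$), the candidate for each source algebra is essentially determined by the fusion system and the Külshammer–Puig class in $H^2(E;k^\times)$, which is trivial here. Concretely, I expect to reduce to showing that $A$ is \emph{nilpotent-covered} in the appropriate sense, or rather that $A$ and $A'$ have a common source algebra, by comparing each with a third explicit model — for instance a block of $k[(C_3 \times C_3) \rtimes E]$ or of $k\,\mathrm{SL}_2(9)$-type data — and invoking the classification of source algebras in this defect group case. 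The splendid/Puig part then comes for free: a stable equivalence of Morita type given by a trivial-source bimodule that is moreover a Morita equivalence is automatically a Puig equivalence, and one checks the trivial-source condition by restricting the bimodule to $\Delta P$ and to the local subgroups $\Delta Q$ for $Q \le P$, using the Brauer construction.

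The key steps in order: (1) compute $N_G(P)$, $C_G(P)$, the inertial quotient $E$ and the fusion system for $A$ using the character-theoretic and local data of $2.\mathsf{HS}$ at $p=3$ (decomposition matrix, generalised decomposition numbers, and the known $3$-local subgroups of $\mathsf{HS}$, lifted to the double cover on the faithful block); (2) do the analogous computation for $A'$ in $\mathfrak{A}_8$ and verify the fusion systems match; (3) identify the common source-algebra structure, either via the classification of such blocks or by an explicit Morita equivalence with a twisted group algebra model, and verify the Morita bimodule can be taken to be a $p$-permutation (trivial-source) module with the correct Brauer quotients; (4) conclude that $A$ and $A'$ are Puig equivalent, whence — invoking the resolution of \cite{Okuyama1997,Okuyama2000} for $A'$ and the fact that Puig equivalence composes with splendid derived equivalence — Theorem~\ref{MainTheorem} and Corollary~\ref{ADGCfor2HSforAllPrimes} follow. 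The main obstacle will be step (3): establishing the Puig (not merely Morita) equivalence, i.e. controlling the source algebras rather than just the character-theoretic fingerprint, since equality of decomposition matrices alone does not imply Morita equivalence. I anticipate this requires either a clean appeal to the structure theory of blocks with defect group $C_3 \times C_3$ and a specified inertial quotient, or an explicit hands-on construction of the bimodule — likely via computer-algebra computations with concrete module representatives in $k[2.\mathsf{HS}]$ and $k\mathfrak{A}_8$ — to certify that the stable equivalence of Morita type between the blocks is in fact a splendid Morita equivalence.
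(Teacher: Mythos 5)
There is a genuine gap at what you yourself identify as the crux, step (3). No ``classification of source algebras'' or ``structure theory of blocks with defect group $C_3\times C_3$ and a specified inertial quotient'' exists that you could invoke: knowing that both blocks have defect group $P=C_3\times C_3$, inertial quotient $D_8$, the same fusion system and trivial K\"ulshammer--Puig class does not determine the source algebra, nor even the Morita type (this is essentially why Donovan's and Puig's conjectures are open even in this tiny case, and why the paper must work so hard). Equality of decomposition matrices is likewise only ``evidence''. So the first branch of your step (3) cannot be carried out, and the second branch (``explicit hands-on construction'') is the right direction but is left entirely unspecified, whereas it is the entire content of the proof. What is actually needed is: (a) a splendid stable equivalence of Morita type between $A$ and its Brauer/Fong--Reynolds correspondent $B$, obtained not by guessing a bimodule but by gluing local Puig equivalences at the subgroups $Q\le P$ of order $3$ (here $C_G(Q)/Q\cong(\mathfrak A_5\times 2).2$, a cyclic-defect situation handled by Koshitani--Kunugi, then Koshitani--Linckelmann indecomposability and Linckelmann's gluing theorem); (b) an explicit determination of the images of the five simple $A$-modules under this stable equivalence, which in the paper requires showing $S_1,S_2,S_4,S_5$ have trivial source via induction from subgroups $U_3(5)$, $\mathsf M_{11}$, $2.\mathsf M_{22}$, plus a stripping-off argument for $S_3$; only then does Linckelmann's criterion (simples go to simples) upgrade the stable equivalence to a Morita equivalence induced by a $\Delta P$-projective trivial source bimodule, i.e.\ a Puig equivalence.

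Moreover, even granting all of that, your proposal misses the one subtlety that separates ``Puig'' from ``merely Morita'' in this example, and which the paper flags explicitly (Remark {\bf\ref{twoequivalences}}(a) and {\bf\ref{J4problem}}): the local algebra $k[P\rtimes D_8]$ has an outer automorphism swapping the two simple modules $1c\leftrightarrow 1d$ (equivalently $2\gamma\leftrightarrow 2\delta$ for $A_N$), so treating $2.\mathsf{HS}$ and $\mathfrak A_8$ as unrelated abstract groups one cannot decide whether the composed equivalence matches $f(S_4)$ with $\mathcal F'(28)$ or with its twisted partner; twisting by the outer automorphism repairs the mismatch at the cost of destroying $\Delta P$-projectivity, leaving only a Morita equivalence. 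The paper resolves this by fixing an embedding $\mathfrak A_8=G'\le G$ with $P\le H'\le H$ compatible, and restricting $S_4$ to $G'$ to prove $1\delta=1d$ (Lemma {\bf\ref{match2delta2d}}). Your outline contains no mechanism for making this identification, so as written it would at best deliver a Morita equivalence between $A$ and $A'$, not the asserted Puig equivalence.
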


\begin{Remark}\label{twoequivalences}
A few remarks on {\bf\ref{2HSandA8}} are appropriate:

(a)
In order to prove {\bf\ref{2HSandA8}} in its full strength,
the detailed local analysis leads to a problem similar to the
one already encountered in \cite[6.14.Question]{KoshitaniKunugiWaki2008}:
Viewing $G$ and $G'$ as (unrelated) abstract groups would
only allow to prove that the block algebras $A$ and $A'$
are Morita equivalent, but not necessarily Puig equivalent.
In its consequence this would mean
that we were only able to verify Brou{\'e}'s conjecture {\bf\ref{ADGC}},
but not Rickard's conjecture {\bf\ref{RickardConjecture}} for $G$.
To remedy this, and to circumvent 
\cite[6.14.Question]{KoshitaniKunugiWaki2008},
we use the fact that $G'$ can be embedded as a subgroup into $G$, 
leading to an explicit configuration of groups
allowing for compatible local analysis.

\medskip
(b)
Note that complex conjugation induces a non-trivial
permutation both on the irreducible ordinary and Brauer
characters of $A$; in terms of columns of the decomposition
matrix this amounts to interchanging the first two columns.
But all ordinary and Brauer characters of $A'$ are real-valued.
Hence the Puig equivalence asserted by {\bf\ref{2HSandA8}}
does not commute with the self-equivalences
of the module categories of $A$ and $A'$ induced by
taking contragredient modules.

Actually, our proof of {\bf\ref{2HSandA8}} 
provides two distinct Puig equivalences,
one inducing the bijection between the simple $A$- and $A'$-modules
as indicated in the decomposition matrix above,
the other one inducing the bijection obtained by
interchanging its first two columns.

\medskip
(c)
As far as we have experienced,
it looks that most of all non-principal 
$3$-blocks with elementary abelian defect group $P$ of
order $9$ are just Morita equivalent to 
certain principal $3$-blocks with defect group $P$, see 
\cite{KoshitaniKunugiWaki2002,KoshitaniKunugiWaki2004,
KoshitaniKunugiWaki2008,KoshitaniMueller2010}, for instance.
One might be tempted to say that these non-principal blocks
are \emph{pseudo-principal}.
So, the non-principal block algebra $A$ considered here is 
even pseudo-principal in two ways, each leading to different 
`trivial' character; and the principal block algebra $A'$ 
is also pseudo-principal with a different `trivial' character.

However, there are non-principal $3$-blocks of
finite groups with defect group $P$ 
which are not pseudo-principal in the above sense,
that is they are not Morita equivalent to any principal $3$-block:
For example, it has been already noted in
\cite[Remark 4.4]{Koshitani2003} that the non-principal $3$-block 
with defect group $P$ of the Higman-Sims sporadic simple group {\sf HS} 
has this property, and the faithful $3$-blocks of $4.${\sf M}$_{22}$ 
described in \cite{MuellerSchaps2008} have as well.
\end{Remark}

\begin{Contents}
This paper is organised as follows:
In \S\ref{prel} we recall a few of the most important
ingredients of our proofs.
In \S\ref{***} we present the local data related to $G' = \mathfrak A_8$.
In \S\ref{**} we present the local data related to $G = 2.{\sf HS}$,
and relate the groups $G'$ and $G$; in particular we comment
on how the explicit embedding is achieved in a computational setting.
In \S\ref{*****} we proceed to give a stable equivalence for $A$
and its Brauer correspondent.
In \S\ref{******} we determine the images of the simple $A$-modules
with respect to this stable equivalence.
In \S\ref{proof} we finally complete the
proofs of {\bf\ref{MainTheorem}}, {\bf\ref{ADGCfor2HSforAllPrimes}} 
and {\bf\ref{2HSandA8}}, and we also give details on the 
phenomena in {\bf\ref{twoequivalences}}(a) and {\bf\ref{twoequivalences}}(b).

A further comment on the computational contents of the present
paper is in order:
As tools, we use the computer algebra system {\sf GAP} \cite{GAP},
to calculate with permutation groups as well as with ordinary
and Brauer characters. We also make use of the data library
\cite{CTblLib}, in particular allowing for easy access to the data
compiled in \cite{Atlas,ModularAtlas,ModularAtlasProject},
and of the interface \cite{AtlasRep} to the data library \cite{ModAtlasRep}.
Moreover, we use the computer algebra system {\sf MeatAxe} \cite{MA}
to handle matrix representations over finite fields,
as well as its extensions to compute submodule lattices 
\cite{LuxMueRin,MueLatt},
radical and socle series \cite{LuxWie},
and homomorphism spaces, endomorphism rings and direct sum decompositions 
\cite{LuxSzokeII,LuxSzoke}.
\end{Contents}

\begin{Notation/Definition}\label{NotationEquivalences}
(a)
Throughout this paper, we use the standard 
notation and terminology as is used in
\cite{Atlas,NagaoTsushima,Thevenaz}. We recall a few for convenience:

If $A$ and $B$ are finite dimensional $k$-algebras, where $k$ is a field,
we denote by $\mathrm{mod}{\text -}A$,
$A{\text -}\mathrm{mod}$ and
$A{\text -}\mathrm{mod}{\text -}B$
the categories of finitely generated right $A$-modules,
left $A$-modules and $(A,B)$-bimodules, respectively.
We write $M_A$, $_AM$ and $_AM_B$ when $M$ is 
a right $A$-module,
a left $A$-module and an $(A,B)$-bimodule. 
A module always refers to 
a finitely generated right module, unless stated otherwise.
We let $M^\vee =\Hom_A(M_A, A_A)$ be the $A$-dual
of the $A$-module $M$, so that $M^\vee$ becomes a left $A$-module 
via $(a\phi)(m) = a{\cdot}\phi(m)$ for $a \in A$, $\phi \in M^\vee$
and $m \in M$.
We denote by ${\mathrm{soc}}(M)$ and ${\mathrm{rad}}(M)$
the socle and the radical of $M$, respectively.
For simple $A$-modules $S_1, \cdots, S_n$, 
and positive integers $a_1, \cdots, a_n$,
we write that 
`\emph{$M = a_1 \times S_1 + \cdots + a_n \times S_n$,
as composition factors}'
when the set of all composition factors are
$a_1$ times $S_1$, $\cdots$, $a_n$ times $S_n$.
For another $A$-module $L$, we write $M{\Big |}L$ when
$M$ is isomorphic to a direct summand of $L$ as
an $A$-module.
If $A$ is self-injective,
the stable module category $\underline{\mathrm{mod}}{\text -}A$
is the quotient category of $\mathrm{mod}{\text -}A$
with respect to the projective $A$-homomorphisms, that is
those factoring through a projective module.

By $G$ we always denote a finite group, and we fix a
prime number $p$. Assume that $(\mathcal K, \mathcal O, k)$ is a
splitting $p$-modular system for all subgroups of $G$, that is
to say, $\mathcal O$ is a complete discrete valuation ring of
rank one such that its quotient field $\mathcal K$ is
of characteristic zero, and its residue field
$k=\mathcal O/\mathrm{rad}(\mathcal O)$ is of
characteristic $p$, and that $\mathcal K$ and $k$ are
splitting fields for all subgroups of $G$.
We denote by $k_G$ the trivial $kG$-module.
If $X$ is a $kG$-module, then we write $X^* =\mathrm{Hom}_k(X,k)$ 
for the {\sl contragredient} of $X$, 
namely, $X^*$ is again a $kG$-module
via $(\varphi g) (x) = \varphi(xg^{-1})$ for $x \in X$,
$\varphi \in X^*$ and $g \in G$;
if no confusion may arise we also call this the {\sl dual} of $X$.
Let $H$ be a subgroup of $G$, and let $M$ and $N$ be
a $kG$-module and an $kH$-module, respectively.
Then let ${M}{\downarrow}^G_H = {M}{\downarrow}_H$ be the
restriction of $M$ to $H$, and let 
${N}{\uparrow}_H^G = {N}{\uparrow}^G = 
(N \otimes_{kH}kG)_{kG}$ 
be the induction (induced module) of $N$ to $G$.

We denote by $\mathrm{Irr}(G)$ and $\mathrm{IBr}(G)$ the sets of
all irreducible ordinary and Brauer characters of $G$, respectively;
we write $1_G$ for the trivial character of $G$.
Since the character field 
$\Q(\chi):=\Q(\chi(g)\: ; \:g\in G)\subseteq\mathcal K$ 
of any character $\chi\in\mathrm{Irr}(G)$
is contained in a cyclotomic field, we may identify $\Q(\chi)$
with a subfield of the complex number field $\C$, hence we may think
of characters having values in $\C$. In particular,
we write $\chi^{*}$ for the complex conjugate of $\chi$,
where of course $\chi^{*}$ is the character of the 
$\mathcal K G$-module contragredient to the $\mathcal K G$-module
affording $\chi$.
If $A$ is a block algebra of $\mathcal OG$,
then we write $\mathrm{Irr}(A)$ and $\mathrm{IBr}(A)$ for
the sets of all characters in $\mathrm{Irr}(G)$ and $\mathrm{IBr}(G)$
which belong to $A$, respectively.

\medskip
(b)
Let $G'$ be another finite group, and let $V$ be an 
$(\mathcal OG, \mathcal OG')$-bimodule. 
Then we can regard $V$ as a right
$\mathcal O[G \times G']$-module
via $v{\cdot}(g,g') = g^{-1}vg'$ for $v \in V$ and $g, g' \in G$.
We denote by $\Delta G= \{ (g,g) \in G \times G \, | \, g \in G \}$.
Let $A$ and $A'$ be block algebras of $\mathcal OG$ and $\mathcal OG'$, 
respectively. Then we say that $A$ and $A'$ are {\sl Puig equivalent}
if $A$ and $A'$ have a common defect group $P$, 
and if there is a Morita equivalence between $A$ and $A'$
which is induced by an $(A,A')$-bimodule $\mathfrak M$ 
such that, as a right
$\mathcal O[G \times G']$-module, 
$\mathfrak M$ is a trivial source
module and $\Delta P$-projective. 
This is equivalent to a condition that
$A$ and $A'$ have source algebras which are isomorphic as
interior $P$-algebras, 
see \cite[Remark 7.5]{Puig1999} and \cite[Theorem 4.1]{Linckelmann2001}.
We say that $A$ and $A'$ are 
{\sl stably equivalent of Morita type} 
if there exists an $(A, A')$-bimodule 
$\mathfrak M$ such that both
${_A}\mathfrak M$ and $\mathfrak M_{A'}$ are
projective and that 
$_A(\mathfrak M \otimes_{A'} \mathfrak M^\vee)_A 
  \cong {_A}{A}{_A} \oplus
 ({\mathrm{proj}} \ (A,A){\text{-}}{\mathrm{bimod}})$
and
$_{A'} (\mathfrak M^\vee \otimes_A \mathfrak M)_{A'} 
\cong {_{A'}}{A'}{_{A'}} \oplus
 ({\mathrm{proj}} \ (A' ,A'){\text{-}}{\mathrm{bimod}})$.
We say that $A$ and $A'$ are 
{\sl splendidly stably equivalent of Morita type}
if $A$ and $A'$ have a common defect group $P$ and 
the stable 
equivalence of Morita type is induced by
an $(A,A')$-bimodule $\mathfrak M$ 
which is a trivial source
$\mathcal O[G \times G']$-module and is $\Delta P$-projective,
see \cite[Theorem~3.1]{Linckelmann2001}.

We say that $A$ and $A'$ are 
{\sl derived equivalent (or Rickard equivalent)} if
${\mathrm{D}}^b({\mathrm{mod}}{\text{-}}A)$ and
${\mathrm{D}}^b({\mathrm{mod}}{\text{-}}A')$ are
equivalent as triangulated categories,
where 
${\mathrm{D}}^b({\mathrm{mod}}{\text{-}}A)$
is the bounded derived category of ${\mathrm{mod}}{\text{-}}A$.
In that case, there even is a {\sl Rickard complex}
$M^{\bullet} \in
{\mathrm{C}}^b (A{\text{-}}{\mathrm{mod}}{\text{-}}A')$,
where the latter 
is the category of bounded complexes
of finitely generated $(A,A')$-bimodules,
all of whose terms are projective both as 
left $A$-modules and as right $A'$-modules, such that 
$M^{\bullet}\otimes_{A'}(M^{\bullet})^{\vee}\cong A$
in $K^b(A{\text{-}}{\mathrm{mod}}{\text{-}}A)$
and
$(M^{\bullet})^{\vee}\otimes_A M^{\bullet}\cong A'$
in $K^b(A'{\text{-}}{\mathrm{mod}}{\text{-}}A')$,
where $K^b(A{\text{-}}{\mathrm{mod}}{\text{-}}A)$ is the homotopy
category associated with 
${\mathrm{C}}^b (A{\text{-}}{\mathrm{mod}}{\text{-}}A)$;
in other words, in that case we even have 
$K^b({\mathrm{mod}}{\text{-}}A) \cong 
K^b({\mathrm{mod}}{\text{-}}A')$. 
We say that $A$ and $A'$ are 
{\sl splendidly derived equivalent} 
if $K^b({\mathrm{mod}}{\text{-}}A)$ and 
$K^b({\mathrm{mod}}{\text{-}}A')$ are equivalent
via a Rickard complex 
$M^{\bullet}\in {\mathrm{C}}^b (A{\text{-}}{\mathrm{mod}}{\text{-}}A')$
as above,
such that additionally each of its terms is a 
direct sum of $\Delta P$-projective 
trivial source modules as an
$\mathcal O[G \times G']$-module;
see \cite{Linckelmann1998,Linckelmann2001}.
\end{Notation/Definition}

\section{Preliminaries}\label{prel}

\begin{Lemma}[Scott]\label{Scott}
The following holds:
\begin{enumerate}
\renewcommand{\labelenumi}{\rm{(\roman{enumi})}}
    \item If $M$ is a trivial source $kG$-module,
then $M$ lifts uniquely (up to isomorphism) 
to a trivial source
$\mathcal O G$-lattice $\widehat M$.
    \item If $M$ and $N$ are both trivial source $kG$-modules,
then $[M,N]^G = (\chi_{\widehat M}, \chi_{\widehat N})^G$.
\end{enumerate}
\end{Lemma}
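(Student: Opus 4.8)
The plan is to derive both parts of this (essentially Scott's) lemma from a single homological fact: for trivial source $\mathcal O G$-lattices $X$ and $Y$ one has $\Ext^1_{\mathcal O G}(X,Y)=0$. Recall that, by definition, a trivial source module (resp.\ lattice) is a direct summand of a permutation module $k[\Omega]$ (resp.\ permutation lattice $\mathcal O[\Omega]$) for some finite $G$-set $\Omega$; as $\Ext^1$ is additive in each variable it is enough to check the vanishing for $X=\mathcal O[G/H]=\Ind_H^G\mathcal O$ and $Y=\mathcal O[G/K]=\Ind_K^G\mathcal O$. Here the Eckmann--Shapiro lemma together with the Mackey decomposition of $\Res^G_H\Ind_K^G\mathcal O$ into a direct sum of modules $\Ind^H_{H\cap{}^gK}\mathcal O$ yields $\Ext^1_{\mathcal O G}(\mathcal O[G/H],\mathcal O[G/K])\cong\bigoplus_g H^1(H\cap{}^gK,\mathcal O)$, and each summand vanishes because $H^1$ of a finite group with trivial coefficients in $\mathcal O$ is $\Hom(-,\mathcal O^{+})=0$, the additive group $\mathcal O^{+}$ being torsion free. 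I expect this to be the only real obstacle: it is exactly where the trivial source hypothesis enters (for arbitrary lattices the statement is false), and everything afterwards is formal.

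From the vanishing I would extract the key reduction statement. For trivial source lattices $X,Y$ apply $\Hom_{\mathcal O G}(X,-)$ to the short exact sequence $0\to Y\xrightarrow{\ \pi\ }Y\to Y/\pi Y\to 0$, where $\pi$ is a uniformiser of $\mathcal O$; since $\Ext^1_{\mathcal O G}(X,Y)=0$ and $\Hom_{\mathcal O G}(X,Y/\pi Y)=\Hom_{kG}(X/\pi X,Y/\pi Y)$, this gives a natural isomorphism $\Hom_{\mathcal O G}(X,Y)\tenO k\;\cong\;\Hom_{kG}(\overline X,\overline Y)$, where the bar denotes reduction modulo $\pi$. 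In particular the reduction map $\End_{\mathcal O G}(\mathcal O[\Omega])\to\End_{kG}(k[\Omega])$ is surjective with kernel $\pi\cdot\End_{\mathcal O G}(\mathcal O[\Omega])$.

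Part (i) now follows. For existence, given a trivial source $kG$-module $M$, write $M$ as the image of an idempotent $\overline e\in\End_{kG}(k[\Omega])$; since $\mathcal O$ is complete, $\End_{\mathcal O G}(\mathcal O[\Omega])$ is a module-finite $\mathcal O$-algebra whose radical contains the kernel $\pi\cdot\End_{\mathcal O G}(\mathcal O[\Omega])$ of the surjection onto $\End_{kG}(k[\Omega])$, so idempotents lift; pick $e$ lifting $\overline e$ and set $\widehat M:=e\cdot\mathcal O[\Omega]$, a direct summand of a permutation lattice, hence a trivial source lattice, with $\widehat M/\pi\widehat M\cong M$. For uniqueness, if $X,X'$ are trivial source lattices with $\overline X\cong M\cong\overline X'$, lift $\mathrm{id}_M$ through the isomorphism $\Hom_{\mathcal O G}(X,X')\tenO k\cong\Hom_{kG}(\overline X,\overline X')$ to a homomorphism $\varphi\colon X\to X'$; then $\overline\varphi$ is an isomorphism, so $\varphi$ is surjective by Nakayama's lemma, and as $X$ and $X'$ are $\mathcal O$-free of the same rank $\dim_k M$ it is an isomorphism.

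For part (ii), let $\widehat M$ and $\widehat N$ be the lifts provided by (i). The $\mathcal O$-module $\Hom_{\mathcal O G}(\widehat M,\widehat N)$ is a submodule of the free $\mathcal O$-module $\Hom_{\mathcal O}(\widehat M,\widehat N)$, hence itself free over the discrete valuation ring $\mathcal O$; extending scalars to $\mathcal K$ identifies its rank with $\dim_{\mathcal K}\Hom_{\mathcal K G}(\mathcal K\tenO\widehat M,\mathcal K\tenO\widehat N)=(\chi_{\widehat M},\chi_{\widehat N})^G$, while reducing modulo $\pi$ via the isomorphism above identifies the same rank with $\dim_k\Hom_{kG}(M,N)=[M,N]^G$. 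Comparing the two computations of the rank gives $[M,N]^G=(\chi_{\widehat M},\chi_{\widehat N})^G$, as claimed.
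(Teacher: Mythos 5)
Your proposal is correct. Note that the paper does not actually prove this lemma: it simply cites \cite[II Theorem 12.4 and I Proposition 14.8]{Landrock} and \cite[Corollary 3.11.4]{Benson}, so there is no in-paper argument to compare against; what you have written is essentially the standard proof underlying those references. Your three ingredients are all sound: the vanishing $\Ext^1_{\mathcal O G}(X,Y)=0$ for summands of permutation lattices (reduced via Mackey and Eckmann--Shapiro to $H^1(L,\mathcal O)=\Hom(L,\mathcal O^+)=0$ for $L$ finite and $\mathcal O^+$ torsion free), the resulting isomorphism $\Hom_{\mathcal O G}(X,Y)\otimes_{\mathcal O}k\cong\Hom_{kG}(\overline X,\overline Y)$, and then idempotent lifting in the module-finite $\mathcal O$-algebra $\End_{\mathcal O G}(\mathcal O[\Omega])$ (using completeness of $\mathcal O$) together with the Nakayama argument for uniqueness in (i), and the two computations of the $\mathcal O$-rank of $\Hom_{\mathcal O G}(\widehat M,\widehat N)$ over $\mathcal K$ and over $k$ for (ii). The only point worth making explicit is that the uniqueness step uses the surjectivity of the reduction map on $\Hom$ for the two given lifts $X,X'$, which is again exactly your $\Ext^1$-vanishing applied to trivial source lattices; you do invoke it correctly, so the argument is complete.
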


\begin{proof}
See \cite[II Theorem 12.4 and I Proposition 14.8]{Landrock} and 
\cite[Corollary 3.11.4]{Benson}.
\end{proof}

\begin{Lemma} [Linckelmann] 
\label{Linckelmann}
Let $A$ and $B$ be finite-dimensional $k$-algebras
such that $A$ and $B$ are both
self-injective and indecomposable as algebras, but not simple.
Suppose that there is an 
$(A,B)$-bimodule $M$ such that $M$ induces a stable equivalence
between the algebras $A$ and $B$.

\begin{enumerate}
  \renewcommand{\labelenumi}{\rm{(\roman{enumi})}}
    \item
If $M$ is indecomposable then for any simple $A$-module $S$, the $B$-module
$(S \otimes_A M)_B$ is non-projective and indecomposable.
    \item
If for all simple $A$-module $S$ the $B$-module 
$S \otimes_A M$ is simple, then $M$ induces a Morita
equivalence between $A$ and $B$.
\end{enumerate}
\end{Lemma}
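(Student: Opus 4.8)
The plan is to work with the exact functor $F:=-\tenA M\colon\mod\text{-}A\to\mod\text{-}B$ and its partner $-\tenB M^\vee$. First I would fix conventions: the stable equivalence may be assumed of Morita type (the setting relevant to the applications), so that ${}_AM$ and $M_B$ are projective — whence $F$ is exact, preserves projectives, and induces the given equivalence $\underline F\colon\underline{\mod}\text{-}A\xrightarrow{\;\sim\;}\underline{\mod}\text{-}B$ — and $M\tenB M^\vee\cong A\oplus U$, $M^\vee\tenA M\cong B\oplus U'$ with $U$, $U'$ projective bimodules; note that an indecomposable $M$ has no projective direct summand as a bimodule, since a stable equivalence between the non-simple algebras $A$, $B$ is not the zero functor. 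I would also record two facts for repeated use: (a) as $A$ is indecomposable as an algebra but not simple, it has no simple projective module, such a module being also injective and hence an isolated vertex of the connected Ext-quiver of $A$, forcing $A$ simple; and (b) ${}_AM$ is a generator of $A\text{-}\mod$, since ${}_AA$ is a direct summand of ${}_AM^{\oplus n}$ by $M\tenB M^\vee\cong A\oplus U$. The symmetric statements hold for $B$ and $M^\vee$.

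For (i): by (a) a simple $A$-module $S$ is a non-zero object of $\underline{\mod}\text{-}A$, so since $\underline F$ is an equivalence and $S$ is indecomposable, $F(S)=S\tenA M$ is non-zero and indecomposable in $\underline{\mod}\text{-}B$; in particular it is not projective, and writing $S\tenA M\cong V\oplus W$ with $W$ projective and $V$ free of projective summands, $V$ is indecomposable and non-projective. Everything then comes down to proving $W=0$ — equivalently, that $S\tenA M$ has simple top, or simple socle — and this I expect to be the main obstacle. The indecomposability of $M$ is genuinely required here: juggling only the identities $M\tenB M^\vee\cong A\oplus U$ and $M^\vee\tenA M\cong B\oplus U'$ yields merely the weaker implication ``if $S\tenA U=0$ then $W=0$'', and $S\tenA U$ need not vanish when $F$ is not a Morita equivalence. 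The route I would first try: applying $F$ to the projective cover $P_S\twoheadrightarrow S$ presents $S\tenA M$ as a quotient of the projective $B$-module $P_S\tenA M$, and applying $F$ to the injective envelope $S\hookrightarrow I_S$ (with $I_S$ the indecomposable projective--injective $A$-module with socle $S$) presents it as a submodule of the projective--injective $B$-module $I_S\tenA M$; if either $P_S\tenA M$ or $I_S\tenA M$ is indecomposable we are done, since a non-zero quotient of an indecomposable projective has simple top and a non-zero submodule of an indecomposable projective--injective has simple socle. Proving that $F$ carries some indecomposable projective $A$-module to an indecomposable $B$-module — where the indecomposability of $M$ must enter essentially — is the technical heart; here I would follow Linckelmann's original argument, which combines this with the explicit bimodule isomorphisms above.

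For (ii): assume $S\tenA M$ is simple for every simple $A$-module $S$ and put $T_S:=S\tenA M$. I would first note that no indecomposable projective bimodule $Ae\tenk fB$ can be a summand of $M$, since it would contribute the non-zero projective direct summand $fB$ to the simple module $S\tenA M$ for $S=\mathrm{top}(eA)$ — impossible, as a simple module has no non-zero projective summand, there being no simple projective $B$-modules by (a). Hence $M$ is indecomposable and (i) applies, so the $T_S$ are pairwise non-isomorphic simple $B$-modules. By associativity and $M\tenB M^\vee\cong A\oplus U$,
\[
T_S\tenB M^\vee\;\cong\;S\tenA(M\tenB M^\vee)\;\cong\;S\;\oplus\;(S\tenA U),
\]
in which $S\tenA U$ is projective, being a summand of $S\tenA(A\tenk A)^{\oplus n}$, a free right $A$-module. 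Applying (i) with the roles of $A$ and $B$ interchanged to the bimodule $M^\vee$ — which induces the inverse stable equivalence of Morita type and is again indecomposable — at the simple $B$-module $T_S$ shows $T_S\tenB M^\vee$ is indecomposable; Krull--Schmidt then forces $S\tenA U=0$. As this holds for all simple $S$ and $U$ is a projective $(A,A)$-bimodule, $U=0$, i.e.\ $M\tenB M^\vee\cong A$. It follows formally that $U'=0$ as well: by associativity $M^\vee\cong M^\vee\tenA(M\tenB M^\vee)\cong(M^\vee\tenA M)\tenB M^\vee\cong M^\vee\oplus(U'\tenB M^\vee)$, so $U'\tenB M^\vee=0$; hence $U'\oplus(U'\tenB U')\cong U'\tenB(M^\vee\tenA M)=(U'\tenB M^\vee)\tenA M=0$, whence $U'=0$. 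Therefore $M$ and $M^\vee$ are mutually inverse bimodules, and $F=-\tenA M$ is a Morita equivalence between $A$ and $B$.
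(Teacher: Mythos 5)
The paper does not actually prove this lemma: its ``proof'' is the citation \cite[Theorem 2.1(ii) and (iii)]{Linckelmann1996MathZ}, so there is no internal argument to compare with, and your proposal has to be judged on its own. Your part (ii) is essentially the correct (and standard, i.e.\ Linckelmann's own) deduction from part (i): the isomorphism $T_S\otimes_B M^\vee\cong S\oplus(S\otimes_A U)$ with $S\otimes_A U$ projective, the use of indecomposability of $T_S\otimes_B M^\vee$ to force $S\otimes_A U=0$, the Nakayama-type conclusion $U=0$, and the formal elimination of $U'$ are all fine. Two reservations there: the claim that absence of projective bimodule summands already makes $M$ indecomposable is true in this setting but is precisely the content of \cite[Theorem 2.1(i)]{Linckelmann1996MathZ} (one needs the Krull--Schmidt argument using that $A$ is indecomposable as an $(A,A)$-bimodule, including the cross terms $M_i\otimes_B M_j^\vee$), and you assert it without proof; and you silently strengthen the hypothesis ``$M$ induces a stable equivalence'' to ``of Morita type'', which is what Linckelmann's theorem (and this paper's applications) actually use, so that should be stated as part of the reading of the statement rather than assumed tacitly.

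The genuine gap is part (i). You correctly reduce it to showing that the projective part $W$ of $S\otimes_A M$ vanishes, but you then defer exactly this step to ``Linckelmann's original argument'' --- i.e.\ to the same reference the paper cites --- so the technical heart is not proved. Moreover, the route you propose to try first cannot work: for a stable equivalence of Morita type which is not a Morita equivalence, indecomposable projective $A$-modules are in general sent to \emph{decomposable} projective $B$-modules. This already happens in the classical example of a block with TI defect groups and its Brauer correspondent (restriction of a PIM of dimension larger than the largest PIM of the local block must split), and it happens for the functors $\mathcal F'$ and $\mathcal F_N$ used in this paper; so neither $P_S\otimes_A M$ nor $I_S\otimes_A M$ can be expected to be indecomposable, and the ``simple top / simple socle'' argument never gets off the ground. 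Linckelmann's proof of (i) is of a different nature: it exploits the indecomposability of the bimodule $M$ itself, via the bimodule isomorphisms $M\otimes_B M^\vee\cong A\oplus U$, $M^\vee\otimes_A M\cong B\oplus U'$ and an analysis of endomorphisms/idempotents, rather than indecomposability of images of projectives. As it stands, then, your proposal proves (ii) modulo (i) and an unproved reduction, and for (i) it ultimately relies on the very citation the paper gives.
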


\begin{proof}
(i) and (ii) respectively are given in 
\cite[Theorem 2.1(ii) and (iii)]{Linckelmann1996MathZ}.
\end{proof}

\begin{Lemma}[Fong-Reynolds]\label{FongReynolds}
Let $H$ be a normal subgroup of $G$, and let 
$A$ and $B$ be block algebras of $\mathcal OG$ and
$\mathcal OH$, respectively, such that $A$ covers $B$.
Let $T = T_G(B)$ be the inertial subgroup (stabiliser)
of $B$ in $G$. Then, there is a block algebra
$\tilde A$ of $\mathcal OT$ such that
$\tilde A$ covers $B$,
$1_A 1_{\tilde A} = 1_{\tilde A} 1_A = 1_{\tilde A}$, 
$A = \tilde A ^G$ (block induction), and the block
algebras $A$ and $\tilde A$ are Morita equivalent via a
pair 
$(1_A{\cdot}{\mathcal OG}{\cdot}1_{\tilde A}, \, 
  1_{\tilde A}{\cdot}{\mathcal OG}{\cdot}1_A)$, 
that is, the Morita equivalence is a Puig equivalence
and induces a bijection
$$
{\mathrm{Irr}}(\tilde A) \rightarrow
{\mathrm{Irr}}(A),   \quad
\tilde{\chi} \mapsto \tilde{\chi}{\uparrow}^G;
\qquad
{\mathrm{Irr}}(A) \rightarrow
{\mathrm{Irr}}(\tilde A),   \quad
\chi \mapsto {\chi}{\downarrow}_T{\cdot}1_{\tilde A}
$$
between $\mathrm{Irr}(\tilde A)$ and $\mathrm{Irr}(A)$,
and a bijection
$$
{\mathrm{IBr}}(\tilde A) \rightarrow
{\mathrm{IBr}}(A),   \quad
\tilde{\phi} \mapsto \tilde{\phi}{\uparrow}^G;
\qquad
{\mathrm{IBr}}(A) \rightarrow
{\mathrm{IBr}}(\tilde A),   \quad
\phi \mapsto {\phi}{\downarrow}_T{\cdot}1_{\tilde A}
$$
between $\mathrm{IBr}(\tilde A)$ and $\mathrm{IBr}(A)$,
\end{Lemma}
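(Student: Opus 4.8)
The final statement to prove is the Fong--Reynolds Lemma (\ref{FongReynolds}). The plan is to invoke the classical Fong--Reynolds theorem in the sharp, block-algebra-theoretic form due to Fong and, in the refined shape needed here, to K\"ulshammer and Puig. First I would recall the setup: $H\trianglelefteq G$, $B$ a block of $\mathcal{O}H$ with block idempotent $e=1_B$, and $T=T_G(B)=\{g\in G \mid {}^g e=e\}$ the stabiliser (inertial group) of $B$ under the conjugation action of $G$ on $\mathcal{O}H$; since $A$ covers $B$, the idempotent $e$ is not annihilated by $1_A$. The orthogonal idempotent $f:=\sum_{g\in [G/T]}{}^g e \in \mathcal{O}H$ is then a $G$-stable idempotent, central in $\mathcal{O}H$, and one has the standard fact that $1_A f = 1_A$, so that $1_A \mathcal{O}G 1_A$ is an $\mathcal{O}$-algebra direct factor of $f\mathcal{O}G f$.

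The core step is the well-known algebra isomorphism (``Fong reduction''): with $\tilde e$ the block idempotent of $\mathcal{O}T$ covering $B$ and satisfying $1_A 1_{\tilde A}=1_{\tilde A}=1_{\tilde A}1_A$, where $\tilde A = \tilde e\,\mathcal{O}T$, the map
\[
\tilde A \longrightarrow 1_A\,\mathcal{O}G\,1_{\tilde A}\otimes_{\tilde A}1_{\tilde A}\,\mathcal{O}G\,1_A
\]
obtained from multiplication exhibits $(1_A\mathcal{O}G1_{\tilde A},\,1_{\tilde A}\mathcal{O}G1_A)$ as a pair of bimodules inducing a Morita equivalence $A\sim \tilde A$; concretely $1_A\mathcal{O}G1_{\tilde A}\otimes_{\tilde A}1_{\tilde A}\mathcal{O}G1_A\cong A$ as $(A,A)$-bimodules and symmetrically on the other side. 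I would cite here \cite{NagaoTsushima} (Chapter 5) and \cite{Thevenaz} for the isomorphism itself, and for the statement that $A=\tilde A^{\,G}$ as block induction. The Puig-equivalence refinement then comes from noting that $1_A\mathcal{O}G1_{\tilde A}$, viewed as an $\mathcal{O}[G\times T]$-module, is a direct summand of $\mathcal{O}G$, hence a trivial source module, and that a common defect group $P$ of $A$ and $\tilde A$ (any defect group of $\tilde A$ is one of $A$ by block induction, since $T\supseteq H\supseteq$ a defect group of $B$ and $P\le H$ as $P$ is the defect group of a covering block) acts so that the bimodule is $\Delta P$-projective; this is exactly the criterion recalled in {\bf\ref{NotationEquivalences}}(b) for a Puig equivalence. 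The character and Brauer character bijections are then read off directly: the functor $-\otimes_{\tilde A}1_{\tilde A}\mathcal{O}G1_A = -\uparrow^G$ (restricted and truncated by $1_A$) sends $\mathrm{Irr}(\tilde A)$ bijectively to $\mathrm{Irr}(A)$ and $\mathrm{IBr}(\tilde A)$ to $\mathrm{IBr}(A)$, with inverse $\chi\mapsto \chi\downarrow_T\cdot 1_{\tilde A}$, because induction from $T$ followed by multiplication by $1_A$ and restriction back to $T$ followed by multiplication by $1_{\tilde A}$ is naturally isomorphic to the identity on $\tilde A$-modules; this is the classical content of Clifford theory for $B$ being $T$-stable.

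The main obstacle, such as it is, is purely bookkeeping: one must be careful that the three idempotents $1_A\in\mathcal{O}G$, $\tilde e=1_{\tilde A}\in\mathcal{O}T$, and $e=1_B\in\mathcal{O}H$ are chosen compatibly so that $1_A1_{\tilde A}=1_{\tilde A}$, that $\tilde e$ is the \emph{unique} block idempotent of $\mathcal{O}T$ below $f$ mapping to $1_A$ under block induction, and that the defect groups match up on the nose so that $\Delta P$-projectivity is literally satisfied rather than merely $\Delta Q$-projectivity for some conjugate $Q$. Once these normalisations are fixed, every assertion in the lemma --- the covering relation, $1_A1_{\tilde A}=1_{\tilde A}1_A=1_{\tilde A}$, $A=\tilde A^{\,G}$, the Morita/Puig equivalence via the stated bimodule pair, and the two explicit bijections on ordinary and Brauer characters --- follows from the references \cite{NagaoTsushima,Thevenaz} together with \cite[Remark 7.5]{Puig1999} and \cite[Theorem 4.1]{Linckelmann2001} for the interior-$P$-algebra reformulation of Puig equivalence. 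I would therefore present the proof as a short assembly of these cited facts rather than reproving the Fong reduction from scratch.
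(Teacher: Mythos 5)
Your proposal takes essentially the same route as the paper, which proves the lemma purely by citation (to \cite[1.5.Theorem]{KoshitaniKunugiWaki2004} and \cite[Chap.5, Theorem 5.10]{NagaoTsushima}); your more detailed sketch of the Fong reduction, the trivial-source and $\Delta P$-projectivity of $1_A\mathcal{O}G1_{\tilde A}$, and the Clifford-theoretic character bijections is consistent with what those references supply. The only slip is your parenthetical claim that $P\le H$ because $P$ is a defect group of a covering block (in general only $P\cap H$ is a defect group of $B$), but this does not affect the argument since the existence of a common defect group of $A$ and $\tilde A$ inside $T$ is part of the classical theorem you cite.
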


\begin{proof} See \cite[1.5.Theorem]{KoshitaniKunugiWaki2004} and
\cite[Chap.5, Theorem 5.10]{NagaoTsushima}.
\end{proof}

\begin{Remark}\label{FongReynoldsRemark}
In {\bf\ref{FongReynolds}} $\tilde A$ is called a
{\sl Fong-Reynolds correspondent} of $A$ and vice versa.
Note that there can be more than one Fong-Reynolds correspondent
in general.
\end{Remark}

\begin{Lemma}\label{StrippingOffMethod}
Let $A$ and $B$ be finite dimensional $k$-algebras
for a field $k$ such that $A$ and $B$ are both self-injective.
Let $F$ be a covariant functor such that
 \begin{enumerate}
  \renewcommand{\labelenumi}{\rm{(\arabic{enumi})}}
   \item $F$ is exact.
   \item If $X$ is a projective $A$-module, then $F(X)$ is a 
    projective $B$-module,
   \item $F$ induces a stable equivalence from
    $\mod{\text{-}}A$ to $\mod{\text{-}}B$.
 \end{enumerate}
Then the following holds:
\begin{enumerate}
 \renewcommand{\labelenumi}{\rm{(\roman{enumi})}}
 \item
{\sf (Stripping-off method, case of socle)} \, 
  Let $X$ be a projective-free $A$-module, and write
  $F(X) = Y \oplus ({\mathrm{proj}})$ for a projective-free $B$-module $Y$. 
  Let $S$ be a simple $A$-submodule of $X$, and set $T = F(S)$. 
  Now, if $T$ is a simple $B$-module, then we may assume that $Y$
  contains $T$ and that 
  $$ F(X/S)= Y/T \oplus ({\mathrm{proj}}) .$$
 \item 
{\sf (Stripping-off method, case of radical)} \,
  Similarly, 
  let $X$ be a projective-free $A$-module, and write
  $F(X) = Y \oplus ({\mathrm{proj}})$ for a projective-free $B$-module $Y$. 
  Let $X'$ be an $A$-submodule of $X$ such that $X/X'$ is simple,
  and set $T = F(X/X')$. Now, if $T$ is a simple $B$-module,
  then we may assume that $T$ is an epimorphic image of $Y$ and that
  $$ {\mathrm{Ker}}(F(X) \twoheadrightarrow T) ={\mathrm{Ker}}(Y
  \twoheadrightarrow T) \oplus ({\mathrm{proj}}) .$$
\end{enumerate}
\end{Lemma}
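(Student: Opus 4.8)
The plan is to set up a functor $F$ satisfying the three hypotheses and then chase composition series through it, using that $F$ is a stable equivalence to transport simples. First I would recall the standard source of such an $F$: if $\mathfrak{M}$ is an $(A,B)$-bimodule inducing a stable equivalence of Morita type with $_A\mathfrak{M}$ and $\mathfrak{M}_{B}$ projective (equivalently, $-\otimes_A\mathfrak{M}$ is exact, sends projectives to projectives, and descends to an equivalence $\underline{\mathrm{mod}}{\text -}A\to\underline{\mathrm{mod}}{\text -}B$), then $F=-\otimes_A\mathfrak{M}$ is exactly of the type in the statement; but the lemma is phrased abstractly so we need only the three listed properties. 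The key structural fact to exploit throughout is that, since $F$ is exact and preserves projectives, for any short exact sequence $0\to S\to X\to X/S\to 0$ of $A$-modules we get a short exact sequence $0\to F(S)\to F(X)\to F(X/S)\to 0$ of $B$-modules, and modulo projective summands this sequence is "the same" as the image in the stable category.

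For part (i): given the simple submodule $S\subseteq X$ with $T=F(S)$ simple, apply $F$ to $0\to S\to X\to X/S\to 0$ to obtain $0\to T\to F(X)\to F(X/S)\to 0$. Write $F(X)=Y\oplus Q$ and $F(X/S)=Y'\oplus Q'$ with $Y,Y'$ projective-free and $Q,Q'$ projective. The point is to show the inclusion $T\hookrightarrow F(X)$ can be arranged to land in $Y$ (not hitting the projective summand $Q$), and that the resulting quotient is $Y/T\oplus(\text{proj})$. Here I would argue that $T$, being non-projective (it is $F$ of a non-projective module, and $F$ reflects projectivity on the stable category), cannot map into $Q$ in a way that splits off; more carefully, since $T$ is simple and non-projective, any map $T\to Q$ with $Q$ projective factors through $\mathrm{rad}(Q)$ or is zero on the stable level, and one uses Fitting-type/Krull–Schmidt bookkeeping to replace the embedding by one into $Y$. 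Then $F(X/S)=F(X)/T=(Y/T)\oplus Q$, and since $Q$ is projective and $Y/T$ is the projective-free part up to stable isomorphism, we get $F(X/S)=Y/T\oplus(\text{proj})$ after possibly absorbing/cancelling projective summands — this is the "we may assume" in the statement, i.e. the identification is up to the choices of projective-free representatives.

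Part (ii) is formally dual: apply $F$ to $0\to X'\to X\to X/X'\to 0$ to get $0\to F(X')\to F(X)\to T\to 0$ with $T=F(X/X')$ simple, so $T$ is an epimorphic image of $F(X)=Y\oplus Q$; since $T$ is simple and non-projective, the composite $Q\to T$ must be zero (a projective cover argument: if $Q\twoheadrightarrow T$ were nonzero it would be a split surjection onto the simple projective... but $T$ is not projective, contradiction), hence $T$ is already an epimorphic image of $Y$, and $\mathrm{Ker}(F(X)\twoheadrightarrow T)=\mathrm{Ker}(Y\twoheadrightarrow T)\oplus Q$. I expect the main obstacle to be precisely this bookkeeping with projective summands: making rigorous the passage between "isomorphism in $\mathrm{mod}{\text -}B$" and "isomorphism in $\underline{\mathrm{mod}}{\text -}B$" and justifying that one may \emph{choose} the splitting so that the simple module $T$ sits inside (resp. surjects from) the projective-free part. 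The clean way to handle this is to note $T$ has no projective direct summand (as $A,B$ are self-injective but not simple, and $T$ is non-projective by Lemma~\ref{Linckelmann}(i) applied to an indecomposable summand of $\mathfrak M$, or directly since $F$ is a stable equivalence), together with the fact that a nonzero homomorphism from an indecomposable projective onto a simple module forces that simple to be projective; everything else is an application of exactness of $F$ and the Krull–Schmidt theorem.
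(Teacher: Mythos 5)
Your overall skeleton (apply the exact functor $F$ to the short exact sequence and then adjust projective summands) is the right one, but the decisive steps are either missing or rest on false claims. The central gap is that you never rule out the bad case in which the simple $T$ lies entirely inside (resp.\ is a quotient only of) the projective summand $Q$ of $F(X)=Y\oplus Q$. Your argument for this in (ii) is wrong: you assert that a nonzero composite $Q\twoheadrightarrow T$ would have to be a split surjection, but every simple module is a \emph{non-split} quotient of its (indecomposable projective) projective cover, so non-projectivity of $T$ does not force the component $Q\to T$ to vanish; similarly in (i), a non-projective simple $T$ can perfectly well embed into a projective module (for instance into its socle), so ``$T$ cannot map into $Q$ in a way that splits off'' does not show that the component $T\to Y$ of the embedding $T\hookrightarrow Y\oplus Q$ is nonzero. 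The correct route --- and the only place where the hypotheses that $X$ is \emph{projective-free} and that $A$ is self-injective enter, hypotheses you never invoke --- is to show first that the inclusion $S\hookrightarrow X$ (resp.\ the projection $X\twoheadrightarrow X/X'$) is not stably trivial: if it factored through a projective, then the injective hull $I(S)$, which is projective-injective since $A$ is self-injective, would embed into and hence split off $X$ (resp.\ the projective cover $P(X/X')$ would be a direct summand of $X$, via the superfluousness of its radical), contradicting projective-freeness. Transporting this along the stable equivalence induced by $F$ shows that the component $T\to Y$ (resp.\ $Y\to T$) is stably nonzero, in particular nonzero, hence injective (resp.\ surjective) because $T$ is simple.

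The remaining ``bookkeeping'' is then not a vague Fitting/Krull--Schmidt matter but an explicit change of decomposition using self-injectivity of $B$: in (i), since $Q$ is injective and $T\to Y$ is injective, the component $T\to Q$ extends to some $h\colon Y\to Q$, and the automorphism $(y,q)\mapsto (y,\,q-h(y))$ of $Y\oplus Q$ moves $T$ into $Y$, whence $F(X/S)\cong (Y/T)\oplus Q$; dually in (ii), since $Q$ is projective and $Y\to T$ is onto, the component $Q\to T$ lifts to $h\colon Q\to Y$, and replacing $Q$ by the complement $Q'=\{(-h(q),q)\,:\,q\in Q\}$ puts the projective summand inside the kernel, giving $\mathrm{Ker}(F(X)\twoheadrightarrow T)=\mathrm{Ker}(Y\twoheadrightarrow T)\oplus Q'$. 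Note that the paper itself does not prove the lemma but cites \cite[1.11.Lemma]{KoshitaniKunugiWaki2004} and \cite[A.1.Lemma]{KoshitaniMuellerNoeske2011}, where the argument proceeds along exactly these lines; as written, your proposal would not survive the case $\iota_Y=0$ (resp.\ $\pi_Y=0$), and its claimed shortcut via split surjections onto simples is incorrect.
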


\begin{proof} See 
\cite[1.11.Lemma]{KoshitaniKunugiWaki2004} or
\cite[A.1.Lemma]{KoshitaniMuellerNoeske2011}.
\end{proof}

\begin{Lemma}\label{SourceVertex}
Let $H$ be a proper subgroup of $G$, and 
let $A$ and $B$ be block algebras of $kG$ and $kH$, respectively.
Now, let $M$ and $M'$ be finitely generated $(A,B)$-
and $(B,A)$-bimodules, respectively, which satisfy the following:
\begin{enumerate}
 \renewcommand{\labelenumi}{\rm{(\arabic{enumi})}}
   \item
${_A}M_B \mid 1_A{\cdot}kG{\cdot}1_B$ and
${_B}{M'}_A \mid 1_B{\cdot}kG{\cdot}1_A$. 
   \item
The pair $(M, M')$ induces a stable equivalence between
${\mathrm{mod}}{\text{-}}A$ and ${\mathrm{mod}}{\text{-}}B$.
\end{enumerate}
Then we get the following:
\begin{enumerate}
  \renewcommand{\labelenumi}{\rm{(\roman{enumi})}}
   \item Assume that $X$ is a non-projective indecomposable $kG$-module
    in $A$ with vertex $Q$. Then there exists a non-projective
    indecomposable $kH$-module $Y$ in $B$, unique up to isomorphism,
    such that $(X \otimes_A M)_B = Y \oplus ({\mathrm{proj}})$, and
    $Q^g$ is a vertex of $Y$ for some element $g \in G$ 
    {\rm{(}}and hence $Q^g \subseteq H${\rm{)}}. 
    Since $Q^g$ is also a vertex of $X$, this means that
    $X$ and $Y$ have at least one vertex in common.
   \item Assume that $Y$ is a non-projective indecomposable $kH$-module
    in $B$ with vertex $Q$. Then there exists a non-projective
    indecomposable $kG$-module $X$ in $A$, unique up to isomorphism,
    such that $(Y \otimes_B {M'})_A = X \oplus ({\mathrm{proj}})$, and
    $Q$ is a vertex of $X$.
   \item Let $X, Y$ and $Q \leqslant H$ be the as in {\rm{(i)}}.
    Then there is an indecomposable $kQ$-module $L$ such that $L$ is a
    source of both $X$ and $Y$. This means that $X$ and $Y$ have 
    at least one source in common.
   \item Let $X, Y$ and $Q \leqslant H$ be the same as in {\rm{(ii)}}.
    Then there is an indecomposable $kQ$-module $L$ such that $L$ is a
    source of both $X$ and $Y$. This means that $X$ and $Y$ have
    at least one source in common.
   \item Let $X, Y$, $Q$ and $L$ be the same as in {\rm{(iii)}}.
    In addition, suppose that $A$ and $B$ have a common defect group $P$
    {\rm{(}}and hence $P \subseteq H${\rm{)}} 
    and that $H \geqslant N_G(P)$. Let $f$
    be the Green correspondence with respect to $(G, P, H)$. If $ Q \in
    {\mathfrak A}(G, P, H)$, see \cite[Chap.4, \S 4]{NagaoTsushima}
    then we have $(X \otimes_A M)_B = f(X) \oplus ({\mathrm{proj}})$.
  \item Let $X$, $Y$, $Q$ and $L$ be the same as in {\rm{(ii)}}.
    Furthermore, as in {\rm{(v)}}, assume that $P$ is a common defect
    group of $A$ and $B$, and that $H \geqslant N_G(P)$, and let $f$
    and $\mathfrak A$ be the same as in {\rm{(v)}}. Now, if $Q \in
    {\mathfrak A}(G, P, H)$, then we have 
    $(Y \otimes_B M')_A = f^{-1}(Y) \oplus ({\mathrm{proj}})$.
\end{enumerate} 
\end{Lemma}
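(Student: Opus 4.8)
The plan is to derive all six parts from two facts about the functors $F = -\tenA M\colon \mod\text{-}A\to\mod\text{-}B$ and $F' = -\tenB M'\colon\mod\text{-}B\to\mod\text{-}A$. First, since $1_A{\cdot}kG{\cdot}1_B$ and $1_B{\cdot}kG{\cdot}1_A$ are projective as left and as right modules, hypothesis~(1) makes $M$ and $M'$ projective on each side, so $F$ and $F'$ are exact, preserve projectives, and by hypothesis~(2) descend to mutually inverse equivalences $\underline{\mod}\text{-}A\simeq\underline{\mod}\text{-}B$. Second, $F$ and $F'$ are retracts of restriction and induction: for $X$ in $A$ the canonical isomorphism $X\tenkG({}_{kG}(kG)_{kH})\cong X{\downarrow}_H$ of right $kH$-modules, together with $M\mid 1_A{\cdot}kG{\cdot}1_B$ and $X{\cdot}1_A = X$, shows $X\tenA M$ is isomorphic to a direct summand of $X{\downarrow}_H$; dually, $Y\tenkH kG\cong Y{\uparrow}^G$ together with $M'\mid 1_B{\cdot}kG{\cdot}1_A$ shows $Y\tenB M'$ is isomorphic to a direct summand of $Y{\uparrow}^G$ for $Y$ in $B$.

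For (i) and (ii): because $X$ is non-projective indecomposable its image in $\underline{\mod}\text{-}A$ is indecomposable, so $F(X)$ has a unique (up to isomorphism) non-projective indecomposable summand $Y$ and $(X\tenA M)_B = Y\oplus(\mathrm{proj})$; this gives the existence and uniqueness of $Y$. For the vertices I would combine two subconjugacy relations: from $Y\mid X{\downarrow}_H$ and the Mackey decomposition of $(\text{source of }X){\uparrow}_Q^G{\downarrow}_H$, the vertex of $Y$ is $G$-subconjugate to $Q$; from $X\tenA M\tenB M'\cong X\oplus(\mathrm{proj})$ we get $X\mid Y\tenB M'\mid Y{\uparrow}^G$, so the vertex of $X$ is $G$-subconjugate to that of $Y$. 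Comparing orders, the two vertices are $G$-conjugate, and since $Y$ is a $kH$-module a conjugate $Q^g$ of $Q$ lies in $H$, proving (i). Part (ii) is the same with the roles of $(G,A,M)$ and $(H,B,M')$ exchanged; there one first gets that $X$ has vertex $G$-conjugate to $Q$, and then, feeding $Q\leqslant {}^gQ_X$ for a vertex $Q_X$ of $X$ from the Mackey decomposition of $Q_X{\uparrow}^G{\downarrow}_H$ together with the equality of orders, one sees that $Q$ itself is a vertex of $X$.

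For (iii) and (iv): after replacing $Q$ by a $G$-conjugate we may take $Q\leqslant H$ to be a common vertex of $X$ and $Y$. Choosing a source $S$ of $X$ at $Q$, we have $Y\mid X{\downarrow}_H\mid S{\uparrow}_Q^G{\downarrow}_H = \bigoplus_{g}({}^gS){\downarrow}_{{}^gQ\cap H}{\uparrow}_{{}^gQ\cap H}^H$, so $Y$ is a summand of one term; since $\mathrm{vtx}(Y) = Q$ has order $|{}^gQ|$, that term must have ${}^gQ\leqslant H$ and be induced from ${}^gQ$, and after an $H$-conjugation it becomes $V{\uparrow}_Q^H$ with $V$ an $N_G(Q)$-conjugate of $S$. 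Inspecting the Mackey decomposition of $V{\uparrow}_Q^H{\downarrow}_Q$ then shows a source of $Y$ is an $N_H(Q)$-conjugate of $V$, hence an $N_G(Q)$-conjugate of $S$; as the sources of $X$ at $Q$ form a single $N_G(Q)$-orbit, $X$ and $Y$ admit a common source $L$. Part (iv) is the symmetric statement applied to $Y\tenB M'\mid Y{\uparrow}^G$ via part (ii).

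For (v) and (vi): under the extra hypotheses let $f$ be the Green correspondence for $(G,P,H)$, so that if $X$ has vertex $Q\in\mathfrak{A}(G,P,H)$ then $X{\downarrow}_H = f(X)\oplus W$, where $f(X)$ is the unique indecomposable summand with vertex in the "good" class and every summand of $W$ has vertex in the complementary classes of the partition of \cite[Chap.4, \S4]{NagaoTsushima}. The unique non-projective indecomposable summand $Y$ of $X\tenA M$ is a summand of $X{\downarrow}_H$ whose vertex is, by (i), a $G$-conjugate of $Q$ lying in $H$ and hence again in $\mathfrak{A}(G,P,H)$; so $Y$ cannot be a summand of $W$, forcing $Y = f(X)$ and $(X\tenA M)_B = f(X)\oplus(\mathrm{proj})$. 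Part (vi) follows symmetrically from (ii), using $Y{\uparrow}^G = f^{-1}(Y)\oplus W'$. The main obstacle is the bookkeeping in (iii)–(iv): one must push sources and vertices through the Mackey decompositions while controlling the ambiguity of a source up to $N_G(Q)$- versus $N_H(Q)$-conjugacy, and similarly in (v)–(vi) one must pin down the Green correspondent precisely among the summands of the restriction, respectively induction; everything else is a routine combination of the stable-equivalence formalism with standard vertex–source theory.
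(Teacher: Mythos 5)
The paper offers no argument of its own here: its ``proof'' is the single citation \cite[A.3.Lemma]{KoshitaniMuellerNoeske2011}. Your reconstruction is exactly the standard argument underlying that reference: condition (1) makes $M$ and $M'$ projective on either side, hence the functors are exact and preserve projectives; $X\otimes_A M\mid X{\downarrow}_H\cdot 1_B$ and $Y\otimes_B M'\mid Y{\uparrow}^G\cdot 1_A$; the two subconjugacy relations obtained from $Y\mid X{\downarrow}_H$ and $X\mid Y\otimes_B M'\mid Y{\uparrow}^G$ force the vertices to be $G$-conjugate; and the Mackey bookkeeping with sources, respectively the Green correspondence, yields (iii)--(iv) and (v)--(vi). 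Parts (i)--(iv) and (vi) are correct as you argue them.

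There is, however, one step in (v) that does not hold as written: you assert that the vertex of $Y$, being a $G$-conjugate of $Q$ lying in $H$, is ``hence again in $\mathfrak{A}(G,P,H)$''. The members of $\mathfrak{A}(G,P,H)$ are by definition subgroups of $P$ (not $G$-subconjugate to any ${}^xP\cap P$ with $x\notin H$), and a $G$-conjugate of $Q$ inside $H$ need not lie in $P$, nor is it $H$-conjugate into $P$ on purely group-theoretic grounds, since $P$ need not be a Sylow $p$-subgroup of $H$. The repair uses precisely the extra hypothesis of (v) that you did not invoke: $Y$ belongs to the block $B$ with defect group $P$, so a vertex $R$ of $Y$ may be chosen with $R\leqslant P$; since $R$ is $G$-conjugate to $Q\in\mathfrak{A}(G,P,H)$ and $\mathfrak{A}$ is defined by $G$-subconjugacy into $\mathfrak{X}$, we get $R\in\mathfrak{A}(G,P,H)$. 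One then needs the exclusion implicit in your ``partition'' phrasing: an indecomposable $kH$-module with vertex $R\leqslant P$, $R\in\mathfrak{A}$, cannot be $\mathfrak{Y}$-projective; indeed, if ${}^hR\leqslant {}^xP\cap H$ with $h\in H$ and $x\notin H$, then ${}^hR\leqslant {}^xP\cap{}^hP$, so $R\leqslant {}^{h^{-1}x}P\cap P\in\mathfrak{X}$ with $h^{-1}x\notin H$, a contradiction. With these two sentences inserted, $Y$ cannot occur in the $\mathfrak{Y}$-projective complement of $f(X)$ in $X{\downarrow}_H$, so $Y\cong f(X)$ and (v) follows. Part (vi) needs no such care, since there $Q\in\mathfrak{A}(G,P,H)$ is assumed outright and $Q$ is a vertex of $X$ by (ii), so a summand of the $\mathfrak{X}$-projective complement is excluded directly by the definition of $\mathfrak{A}$.
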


\begin{proof} See 
\cite[A.3.Lemma]{KoshitaniMuellerNoeske2011}.
\end{proof}

\begin{Lemma}\label{A5SemidirectC4}
Set $G = \mathfrak A_5 \rtimes C_4 = (\mathfrak A_5 \times 2).2$,
where the action on $C_4$ of $\mathfrak A_5$ is that
$C_4/C_2$ acts faithfully on $\mathfrak A_5$. 
Let $P$ be a Sylow $3$-subgroup of $\mathfrak A_5$
(and hence $P \cong C_3$).
\begin{enumerate}\renewcommand{\labelenumi}{\rm{(\roman{enumi})}}
    \item 
There is a faithful non-principal block algebra $A$ of $kG$
(that is, not having the central subgroup of order $2$ in its kernel) 
with defect group $P$.
    \item
We can write 
${\mathrm{Irr}}(A) = \{ \chi_1, \chi_2, \chi_3 \}$
such that
$\chi_1(1) = 1$, $\chi_2(1) = 4$, $\chi_3(1) = 5$
and 
$\chi_1(u) = \chi_2(u) = 1$
for any element $u \in P - \{1\}$.
Moreover, we can write
${\mathrm{IBr}}(A) = \{ \varphi_1, \varphi_2 \}$
such that the $3$-decomposition matrix of $A$ is
\begin{center}
{
{\rm
\begin{tabular}{l|cc}
            & $\varphi_1$ & $\varphi_2$   \\
\hline
$\chi_{1}$ & 1  &  . \\
$\chi_{2}$ & .  &  1 \\
$\chi_{3}$ & 1  &  1 \\
\end{tabular} 
}}
\end{center}
     \item
Set $H = N_G(P)$. Then
$H = \mathfrak S_3 \times C_4$,
and let $B$ be a block algebra
of $kH$ which is the Brauer correspondent of $A$.
      \item
Set $M = \mathfrak f(A)$, where $\mathfrak f$ is the Green
correspondence with respect to $(G \times G, \Delta P, G \times H)$.
Then, $M$ induces a Morita equivalence between 
$A$ and $B$ (and hence $M$ induces a Puig equivalence
between $A$ and $B$). Furthermore, the simple $kG$-modules in $A$
affording $\varphi_1$ and $\varphi_2$ 
are both trivial source $kG$-modules.
\end{enumerate}
\end{Lemma}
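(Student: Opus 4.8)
The plan is to analyze the small group $G = \mathfrak{A}_5 \rtimes C_4$ very explicitly, since everything here is finite and completely computable. I would proceed as follows.

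\medskip

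\emph{Step 1: Identify the block $A$ and its characters.} First I would note that $G$ has a normal subgroup $\mathfrak{A}_5 \times C_2$, and $G/C_2 \cong \mathfrak{S}_5$ (since $C_4/C_2$ acts as the outer automorphism of $\mathfrak{A}_5 = \PSL_2(5)$, giving $\PGL_2(5) = \mathfrak{S}_5$ on the quotient by the kernel of that action... more precisely one checks $G$ has $\mathfrak{S}_5$ as a quotient). The faithful ordinary irreducible characters of $G$ are those not factoring through $G/C_2$; these come from the faithful characters of $\mathfrak{A}_5 \times C_2$ (the ones with the nontrivial character of $C_2$), namely the $\mathfrak{A}_5$-characters of degrees $1, 3, 3, 4, 5$ tensored with the sign of $C_2$, and we must see how the outer $C_2$ acts: it fuses the two degree-$3$ characters and fixes the others. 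So the faithful characters of $G$ have degrees $1$, $4$, $5$ (fixed by the action, hence extending/inducing to a single character of $G$ of that degree) and a degree-$6$ character (induced from $3 \oplus 3^*$). Restricting to a Sylow $3$-subgroup, the $3$-singular information shows the degree-$1$, $4$, $5$ characters lie in one block $A$ with defect group $P \cong C_3$ (the degree-$6$ one is of defect zero). This establishes (i) and the degree list in (ii).

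\medskip

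\emph{Step 2: The decomposition matrix.} Since $A$ has cyclic defect group $C_3$, its Brauer tree is a line or a star; with three ordinary and two modular characters it must be a line $\chi_1 - \varphi_1 - \chi_3 - \varphi_2 - \chi_2$ (the exceptional vertex being trivial since $|P| = 3$ forces no exceptional multiplicity). This immediately gives the stated decomposition matrix. For the character-value claim $\chi_1(u) = \chi_2(u) = 1$ for $u \in P \setminus\{1\}$: $\chi_1$ is linear (it is the faithful linear character of $G$, trivial on the derived subgroup containing $\mathfrak{A}_5$ hence trivial on $P$, so value $1$), and for $\chi_2$ of degree $4$ one reads off from the $\mathfrak{A}_5 \times C_2$ restriction that the degree-$4$ character of $\mathfrak{A}_5$ takes value $1$ on a $3$-element.

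\medskip

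\emph{Step 3: The local structure and (iii).} A Sylow $3$-normalizer in $\mathfrak{A}_5$ is $\mathfrak{S}_3$; since $C_4$ centralizes $\mathfrak{A}_5$ modulo the kernel of its action and the action of $C_4/C_2$ is the outer automorphism which normalizes a chosen $\mathfrak{S}_3 \leq \mathfrak{A}_5$... here one checks directly in $\mathfrak{S}_5$ that the normalizer of a Sylow $3$ is $\mathfrak{S}_3 \times C_2$, and lifting, $N_G(P) = \mathfrak{S}_3 \times C_4$. The Brauer correspondent $B$ is then the faithful block of $k(\mathfrak{S}_3 \times C_4)$ with the nontrivial $C_4/C_2$ behavior — it is itself a cyclic-defect (indeed, Brauer-tree) algebra with the same tree shape, hence Morita equivalent to $kP \rtimes \Aut$-data matching $A$.

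\medskip

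\emph{Step 4: The Puig equivalence (the heart, (iv)).} Let $M = \mathfrak{f}(A)$ be the Green correspondent of the $\mathcal{O}[G \times G]$-module $A$ with respect to $(G \times G, \Delta P, G \times H)$. By the general theory of Green correspondence and the fact that $A$, as a bimodule, has $\Delta P$ as a vertex (it is a $p$-permutation module), $M$ is an $(A, B)$-bimodule inducing a stable equivalence of Morita type between $A$ and $B$ — this is the standard argument (cf. the references cited in the paper for analogous setups). Then I would apply Lemma~\ref{Linckelmann}(ii): it suffices to show that for each of the two simple $A$-modules $S$, the $B$-module $S \otimes_A M$ is again simple. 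Equivalently, via Lemma~\ref{SourceVertex}(v), $S \otimes_A M = f(S) \oplus (\text{proj})$ where $f$ is the ordinary Green correspondence for $(G, P, H)$ — provided the vertices of the $S$ lie in $\mathfrak{A}(G,P,H)$, which here is automatic since $P \cong C_3$ has only the trivial and full subgroups and $H = N_G(P)$. So I must compute the two Green correspondents $f(S_1)$, $f(S_2)$ and check they are simple $B$-modules (equivalently, the simple $kH$-modules in $B$). Since $\dim S_1$, $\dim S_2$ are small (from the decomposition matrix, $\dim S_1 = 1$ and $\dim S_2 = \chi_2(1) = 4$), and $kH = k(\mathfrak{S}_3 \times C_4)$ is tiny, this is a direct, finite check: the trivial-ish simple (image of $\varphi_1$) is $1$-dimensional hence trivially has simple Green correspondent, and for $\varphi_2$ one computes $S_2\!\downarrow_H$, strips the projective summands, and identifies the Green correspondent with the $4$-dimensional... no, with the appropriate simple $B$-module. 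Finally, trivial-source-ness: the two simple modules in $A$ are $1$-dimensional and a module whose Green correspondent is a (one-dimensional, hence trivial-source) $kH$-module — a one-dimensional $kG$-module in a nonprincipal block is trivial-source once we know its restriction to $P$ is trivial, which holds since $P$ is in the kernel of both linear characters $\varphi_1, \varphi_2$ lift to; for the $4$-dimensional simple one verifies its restriction to $P$ is a permutation module by checking it has trivial source via the Scott-module/Lemma~\ref{Scott} criterion comparing $[S, S]^G$ with the inner product of the lifted character.

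\medskip

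\emph{Main obstacle.} The genuinely nontrivial point is Step~4: confirming that \emph{both} $S_i \otimes_A M$ are simple, i.e.\ that the stable equivalence is actually a Morita equivalence. The $1$-dimensional simple is free, but for the $4$-dimensional simple $S_2$ one must genuinely carry out the restriction-to-$H$, strip projectives, and verify the leftover indecomposable non-projective summand is a simple $kH$-module (rather than, say, a length-$2$ uniserial module). This is a concrete but careful computation in the Brauer-tree algebras involved; once it is done, Lemma~\ref{Linckelmann}(ii) upgrades the stable equivalence to a Morita equivalence, and since $M$ is by construction a $\Delta P$-projective trivial source $\mathcal{O}[G\times G]$-module (being a direct summand of $1_A \cdot \mathcal{O}G \cdot 1_B$), this Morita equivalence is automatically a Puig equivalence. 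The final trivial-source assertion for the simple modules then follows from the observation that their Green correspondents to $H$ are one-dimensional (hence trivial-source), together with the fact that trivial source is preserved under the Green correspondence.
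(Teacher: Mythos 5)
Your outline for (iv) is workable in principle, but it is not the paper's argument and, more importantly, it stops exactly where the real work begins. The paper disposes of (iv) in one line: having recorded in (i)--(iii) the defect group, the Brauer correspondent, and in particular the character values $\chi_1(u)=\chi_2(u)=1$ for $u\in P-\{1\}$, it invokes Theorem~1.2 of Koshitani--Kunugi (\emph{Trivial source modules in blocks with cyclic defect groups}), whose hypothesis is precisely that character-value condition; that is the only reason this seemingly odd condition appears in part (ii) at all. Your proposal never uses it and instead proposes a hands-on verification via {\bf\ref{SourceVertex}}(v) and {\bf\ref{Linckelmann}}(ii). That route can be made to work, but the decisive step --- showing that the Green correspondent $f(S_2)$ of the $4$-dimensional simple is a \emph{simple} $B$-module --- is exactly what you label the ``main obstacle'' and leave undone (your own text wavers: ``with the $4$-dimensional\ldots no, with the appropriate simple $B$-module''). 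This check has a genuine wrinkle you do not address: $S_2{\downarrow}_H$ does not stay inside the block over the ``same'' faithful character of $C_4$; its block components have dimensions $3$ and $1$, the $3$-dimensional part is a PIM, and the Green correspondent is the $1$-dimensional module lying in the \emph{other} faithful block of $kH=k[\mathfrak S_3\times C_4]$. Without carrying this out (or citing the Koshitani--Kunugi theorem), the Morita/Puig claim is not proved.

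Two further points. In Step~1 your character count is wrong: a $C_2$-stable character of the index-two normal subgroup $\mathfrak A_5\times C_2$ extends to $G$ in \emph{two} ways, so the faithful characters of $G$ have degrees $1,1,4,4,5,5,6$ (sum of squares $120$), not $1,4,5,6$; consequently ``the degree-$1$, $4$, $5$ characters lie in one block'' needs an actual argument about how these six characters distribute into the two complex-conjugate faithful blocks, which your miscount conceals (the existence statement (i) survives, but the reasoning as written does not establish (ii)). Finally, your trivial-source argument for $S_2$ via {\bf\ref{Scott}} is circular as stated (that lemma compares modules already known to have trivial source); the correct and easy argument is the one you gesture at in the last paragraph: once $f(S_2)$ is known to be a one-dimensional module on which $P$ acts trivially, it has trivial source, and $S_2$ shares a source with its Green correspondent. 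In short: the skeleton is sound and genuinely different from the paper's citation-based proof, but the central computation and the block bookkeeping it requires are missing, so as it stands the proposal does not yet prove (iv).
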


\begin{proof}
(i)-(iii) are easy.
(iv) follows from (i)-(iii) and 
\cite[Theorem 1.2]{KoshitaniKunugi2010}.
\end{proof}

\section{Green correspondences for $\mathfrak A_8$}\label{***}

\begin{Notation}\label{NotationA8}
We introduce some further notation which we use through out the rest of the
paper.
Let $G'$ be the alternating group on $8$ letters, namely,
$G' = \mathfrak A_8$. Since Sylow $3$-subgroups of $G'$ are 
isomorphic to $C_3 \times C_3$,
we can assume that $P$ is a Sylow $3$-subgroup of 
$\mathfrak A_8$ as well,
which is originally a defect group of $A$ and also a 
Sylow $3$-subgroup of $G = 2.${\sf HS},
see {\bf\ref{Notation2}} and {\bf\ref{Notation3}}.
There are exactly two conjugacy classes of $G'$ which contain
elements of order $3$, that is, $P$ has exactly two 
$G'$-conjugacy classes of subgroups of order $3$,
so we call them $Q$ and $R$, see
{\bf\ref{CenQ}}(ii), and see also \cite[p.22]{Atlas}.
Let $H' = N_{G'}(P)$, and hence $H' = P \rtimes D_8$;
note that the subgroups of order $3$ of $P$ still fall
into two $H'$-conjugacy classes. 
Let $A'$ and $B'$, respectively, be the principal block algebras
of $kG'$ and $kH'$. Thus $B' = kH'$.
\end{Notation}

\begin{Lemma}\label{CharactersOfA8}
\begin{enumerate} \renewcommand{\labelenumi}{\rm{(\roman{enumi})}}
    \item 
The $3$-decomposition matrix and the Cartan matrix
of $A'$, respectively, are the following:
{\rm
\begin{center}
\begin{tabular}{l|ccccc}
  & $k_{G'}$ & $7$ & $13$ & $28$ & $35$ \\
\hline
$\chi'_{1} $ & 1 & . & . & . & . \\
$\chi'_{7} $ & . & 1 & . & . & . \\
$\chi'_{14}$ & 1 & . & 1 & . & . \\ 
$\chi'_{20}$ & . & 1 & 1 & . & . \\ 
$\chi'_{28}$ & . & . & . & 1 & . \\ 
$\chi'_{35}$ & . & . & . & . & 1 \\ 
$\chi'_{56}$ & 1 & 1 & 1 & . & 1 \\ 
$\chi'_{64}$ & 1 & . & . & 1 & 1 \\ 
$\chi'_{70}$ & . & 1 & . & 1 & 1 \\ 
\end{tabular} 
\quad
\quad
\quad
\quad
\begin{tabular}{r|ccccc}
      & $P(k_{G'})$ & $P(7)$ & $P(13)$ & $P(28)$ & $P(35)$ \\
\hline
$k_{G'}$ & 4 & 1 & 2 & 1 & 2   \\ 
$7$      & 1 & 4 & 2 & 1 & 2   \\ 
$13$     & 2 & 2 & 3 & 0 & 1   \\ 
$28$     & 1 & 1 & 0 & 3 & 2   \\ 
$35$     & 2 & 2 & 1 & 2 & 4   \\ 
\end{tabular}
\end{center}
}
\smallskip
    \item 
All simple $kG'$-modules $k_{G'}, 7, 13, 28, 35$
in $A'$ have $P$ as their vertices.
\end{enumerate}
\end{Lemma}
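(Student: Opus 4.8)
The statement is essentially a fact about the 3-modular character theory of $\mathfrak{A}_8$, so the plan is to extract part (i) from the known decomposition matrix of $\mathfrak{A}_8$ at $p=3$ and then derive part (ii) as a consequence of the block having elementary abelian defect group of order $9$. First I would recall that Sylow $3$-subgroups of $\mathfrak{A}_8 = G'$ have order $9$ and are elementary abelian, so the principal block $A'$ has full defect; the number of ordinary irreducibles in $A'$ is $9$ and the number of simple modules is $5$, matching the listed $\chi'_1,\dots,\chi'_{70}$ and $k_{G'},7,13,28,35$. The decomposition matrix itself is available from the modular Atlas \cite{ModularAtlas,ModularAtlasProject} (equivalently it can be recomputed with {\sf GAP} \cite{GAP,CTblLib} using the Brauer characters of $\mathfrak{A}_8 \cong \GL_4(2)$), and the Cartan matrix is then obtained as $C = D^{\mathrm{tr}} D$, which is the displayed $5\times 5$ matrix. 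This is a finite, explicit verification and presents no obstacle.

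For part (ii), the point is that every simple module in a block whose defect group $P$ is noncyclic and in which no simple module is projective must in fact have $P$ itself as vertex when $P$ has order $p^2$ and is elementary abelian — more simply, I would argue directly. Since $A'$ is the principal block, the trivial module $k_{G'}$ has vertex $P$. For the remaining four simple modules $S \in \{7,13,28,35\}$: none of them is projective (the Cartan matrix has no entry forcing a simple projective summand — equivalently each projective cover has length $>1$), so each has a nontrivial vertex $Q \leqslant P$, and $Q$ is either conjugate to one of the order-$3$ subgroups or equals $P$. The key step is to rule out $|Q|=3$. One way: if $S$ had a cyclic vertex $Q$ of order $3$, then by Green correspondence and the theory of blocks with cyclic defect — or more concretely by Knörr's theorem relating vertices of simple modules to defect groups of blocks — the Brauer correspondent situation would force $S$ to lie in a block with cyclic defect group, contradicting the structure we see; alternatively one checks that the dimensions $7,13,28,35$ are each divisible by $3$ exactly to the first power would be needed for a vertex of order $3$, whereas in fact $|G'|_3/|Q|_3$ must divide $\dim S$ — here $|G'|_3 = 9$, so a vertex of order $3$ would force $3 \mid \dim S$ but $9 \nmid \dim S$; since $28$ and $35$ satisfy $9 \nmid \dim S$ this rough divisibility test alone is inconclusive and I would instead invoke the sharper statement that for a simple module in a $p$-block of defect $2$ the vertex is either trivial or the full defect group (this is classical; see e.g. the discussion around Knörr's bound in \cite{NagaoTsushima}). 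Since no simple module here is projective, the vertex cannot be trivial, so it must be $P$.

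The main obstacle is the clean justification of the vertex statement in (ii): the bare decomposition matrix does not by itself pin down vertices, and one must appeal to a structural result (Knörr-type vertex theorem for blocks of defect $2$, or an explicit Green-correspondence computation in the {\sf MeatAxe} \cite{MA} over $\Fp$ with $p = 3$ exhibiting that each simple $kG'$-module restricted to $H' = P \rtimes D_8$ has the trivial-source-like behaviour detecting vertex $P$). I would present the Knörr-type argument as the primary line, since it is short and avoids computation, and merely remark that the alternative explicit restriction computation is feasible and consistent. Everything else — the decomposition matrix, the Cartan matrix, the count of irreducibles — is routine data retrieval and a matrix multiplication.
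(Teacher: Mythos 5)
Your proposal is correct and follows essentially the same route as the paper: part (i) is read off from the Modular Atlas (or recomputed in {\sf GAP}), and part (ii) is exactly the citation of Kn\"orr's theorem, whose Corollary~3.7 gives directly that every simple module in a block with abelian defect group $P$ has vertex $P$, so your detours (the divisibility test, the cyclic-defect discussion) are unnecessary but harmless since your primary Kn\"orr-type argument is the one the paper uses.
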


\begin{proof}
(i) follows from \cite[$A_8$ (mod $3$)]{ModularAtlas}
and \cite[p.22]{Atlas},
and for (ii) see \cite[3.7.Corollary]{Knoerr}.
\end{proof}

\begin{Notation}\label{NotationB}
We use the notation
$\chi'_1, \cdots , \chi'_{70}$ and
$k_{G'}, 7, 13, 28, 35$ as in {\bf\ref{CharactersOfA8}},
where the numbers mean the degrees (dimensions) of characters
(modules).
\end{Notation}

\begin{Lemma}\label{CharacterTableOfBprime}
The following holds:
\begin{enumerate} \renewcommand{\labelenumi}{\rm{(\roman{enumi})}}
    \item 
The character table of 
$H' = P \rtimes D_8 \cong (C_3 \times C_3) \rtimes D_8$
is given as follows:
\smallskip

\begin{center}
\begin{tabular}{c|rrrrrrrrr}
{\rm{centraliser}} & $72$& $8$& $12$& $12$& $18$& $18$& $4$& $6$& $6$ \\
{\rm{element}}     & $1A$& $2A$& $2B$& $2C$& $3A$& $3B$& $4A$& $6A$& $6B$ \\
\hline
$\chi_{1a}$ & $1$& $1$& $1$& $1$& $1$& $1$& $1$& $1$& $1$ \\
$\chi_{1b}$ & $1$& $1$& $-1$& $-1$& $1$& $1$& $1$& $-1$& $-1$ \\
$\chi_{1c}$ & $1$& $1$& $-1$& $1$& $1$& $1$& $-1$& $-1$& $1$ \\
$\chi_{1d}$ & $1$& $1$& $1$& $-1$& $1$& $1$& $-1$& $1$& $-1$ \\
$\chi_2$ \  & $2$& $-2$& $0$& $0$& $2$& $2$& $0$& $0$& $0$ \\
$\chi_{4a}$ & $4$& $0$& $0$& $2$& $-2$& $1$& $0$& $0$& $-1$ \\
$\chi_{4b}$ & $4$& $0$& $0$& $-2$& $-2$& $1$& $0$& $0$& $1$ \\
$\chi_{4c}$ & $4$& $0$& $2$& $0$& $1$& $-2$& $0$& $-1$& $0$ \\
$\chi_{4d}$ & $4$& $0$& $-2$& $0$& $1$& $-2$& $0$& $1$& $0$ \\
\end{tabular}
\end{center}
Note that $\chi_{1b}$ is distinguished amongst the non-trivial
linear characters, for example 
by having an element of order $4$ in its kernel.

\smallskip
   \item 
$H'=\Inn(H')\vartriangleleft\Aut(H')$ such that $|\Aut(H')/H'|=2$,
and any non-inner automorphism of $H'$ 
induces a non-inner automorphism of $D_8=H'/P$, 
and interchanges the two conjugacy classes of subgroups of order $3$ of $P$.
In particular, there is an induced character table automorphism of 
$\Irr(H')$ interchanging 
$$\chi_{1c}\leftrightarrow\chi_{1d},\quad
  \chi_{4a}\leftrightarrow\chi_{4c},\quad
  \chi_{4b}\leftrightarrow\chi_{4d} .$$

\smallskip
   \item 
The $3$-decomposition matrix and the Cartan matrix
of $B' = kH' = k[P \rtimes D_8]$,
respectively, are the following:
{\rm
\begin{center}
\begin{tabular}{l|ccccc}
  & $1a$ & $1b$ & $1c$ & $1d$ & $2$ \\
\hline
$\chi_{1a}$ & 1  &  .    & .     &  .    &  . \\
$\chi_{1b}$ & .  &  1    & .     &  .    &  . \\
$\chi_{1c}$ & .  &  .    & 1     &  .    &  . \\ 
$\chi_{1d}$ & .  &  .    & .     &  1    &  . \\ 
$\chi_{2}$  & .  &  .    & .     &  .    &  1 \\ 
$\chi_{4a}$ & 1  &  .    & 1     &  .    &  1 \\ 
$\chi_{4b}$ & .  &  1    & .     &  1    &  1 \\ 
$\chi_{4c}$ & 1  &  .    & .     &  1    &  1 \\ 
$\chi_{4d}$ & .  &  1    & 1     &  .    &  1 \\ 
\end{tabular} 
\quad
\quad
\quad
\quad
\begin{tabular}{r|ccccc}
      & $P(1a)$ & $P(1b)$ & $P(1c)$ & $P(1d)$ & $P(2)$ \\
\hline
$1a$      & 3 & 0 & 1 & 1 & 2   \\ 
$1b$      & 0 & 3 & 1 & 1 & 2   \\ 
$1c$      & 1 & 1 & 3 & 0 & 2   \\ 
$1d$      & 1 & 1 & 0 & 3 & 2   \\ 
$2$       & 2 & 2 & 2 & 2 & 5   \\ 
\end{tabular}
\end{center}
}
\smallskip
    \item
All simple $kH'$-modules $1a, 1b, 1c, 1d, 2$
in $B'$ have $P$ as their vertices.
\end{enumerate}
\end{Lemma}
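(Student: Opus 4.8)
The plan is to identify $H'$ explicitly and then to settle (i)--(iv) by a finite computation, the only non-routine ingredient being Kn{\"o}rr's theorem, which is used for~(iv). Put $P=\langle(1\,2\,3),(4\,5\,6)\rangle\leqslant\mathfrak A_8$. A permutation normalising $P$ must permute the support $\{1,\dots,6\}$ and the two $P$-orbits $\{1,2,3\}$, $\{4,5,6\}$, so $N_{\mathfrak S_8}(P)=(\mathfrak S_3\wr\mathfrak S_2)\times\mathfrak S_2$ and $H'=N_{\mathfrak A_8}(P)$ has order $72$, with $P=O_3(H')\trianglelefteq H'$ and $H'/P\cong D_8$. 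The image $D_8\leqslant\GL_2(3)=\Aut(P)$ is, up to conjugacy, the unique dihedral subgroup of order $8$ (it has index $2$ in the semidihedral Sylow $2$-subgroup $SD_{16}$), generated by the two ``sign changes'' of the coordinate factors of $P$ together with their interchange; such a $D_8$ has exactly \emph{two} orbits on the four subgroups of order $3$ of $P$ --- the two coordinate factors and the two ``diagonal'' subgroups $\langle(1,1)\rangle$, $\langle(1,-1)\rangle$ --- because the central element $-1\in\GL_2(3)$ lies in the kernel of the action of $D_8$ on these four subgroups, so its image in $\mathrm{PGL}_2(3)\cong\mathfrak S_4$ is a Klein four-group, and the regular (hence transitive) one among these pulls back to $Q_8$, not to $D_8$. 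This fixes the isomorphism type of $H'=P\rtimes D_8$ and is the source of the classes $Q,R$ of {\bf\ref{NotationA8}}; in practice one realises $H'$ as this permutation group and reads off the data below with {\sf GAP}.

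For~(i), one computes $\Irr(H')$ by Clifford theory over $P$: the five inflations of $\Irr(D_8)$ give $\chi_{1a},\chi_{1b},\chi_{1c},\chi_{1d}$ of degree $1$ --- with $\chi_{1b}$ the inflation of the distinguished linear character of $D_8$, the one whose kernel contains an element of order $4$ --- and $\chi_2$ of degree $2$; each of the two nontrivial $D_8$-orbits on $\widehat P=\Irr(P)$ then contributes, by inducing an extension of the corresponding linear character of $P$ from $P\rtimes C_2$ (the inverse image of a point stabiliser), two characters of degree $4$, giving $\chi_{4a},\dots,\chi_{4d}$. Matching the nine conjugacy classes with $1A,\dots,6B$ is forced by the centraliser orders (order $8$ picks out the involution acting as $-1$ on $P$, order $12$ the four reflections, the order-$4$ elements acting freely on $P$), and evaluating the induced characters fills in the table. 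For~(ii): every automorphism of $H'$ preserves $O_3(H')=P$, so $\Aut(H')$ is governed by $N_{\GL_2(3)}(D_8)=SD_{16}$, and since also $Z(H')=1$ this gives $\Inn(H')=H'\trianglelefteq\Aut(H')$ of index $2$, the outer class being induced by an element of $SD_{16}\setminus D_8$. That element induces the outer automorphism of $D_8=H'/P$ and, as $SD_{16}$ is transitive on the four subgroups of order $3$ of $P$, interchanges the two $D_8$-classes of these; reading off its action on the table of~(i) gives the stated permutation $\chi_{1c}\leftrightarrow\chi_{1d}$, $\chi_{4a}\leftrightarrow\chi_{4c}$, $\chi_{4b}\leftrightarrow\chi_{4d}$, the characters $\chi_{1a},\chi_{1b},\chi_2$ being intrinsically distinguished and hence fixed.

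For~(iii): since $P=O_3(H')$ acts trivially on every simple $kH'$-module, $\IBr(H')$ is precisely the set of five inflations $1a,1b,1c,1d,2$ of the irreducibles of $D_8$ (ordinary equals modular, as $3\nmid 8$). Restricting the nine lifted characters of~(i) to the five $3$-regular classes $1A,2A,2B,2C,4A$ and expressing them in this basis gives the stated decomposition matrix $D$ --- for example $\chi_{4a}=1a+1c+2$, which one verifies class by class --- and the Cartan matrix is then $D^{\mathrm t}D$; in particular $kH'$ is a single block, in agreement with $B'=kH'$. Finally~(iv): $P$ is a Sylow $3$-subgroup and $P=O_3(H')$, so every block of $kH'$ has defect group $P$; as $P$ is abelian, $Z(P)=P$, and by \cite[3.7.Corollary]{Knoerr} --- exactly as for {\bf\ref{CharactersOfA8}}(ii) --- a vertex of any simple $kH'$-module contains a conjugate of $Z(P)=P$, and hence equals $P$.

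No step here is deep: $|H'|=72$, and every assertion is a finite check. The point requiring care is the bookkeeping in~(i)--(ii): getting the $D_8$-action right so that there really are \emph{two} classes of subgroups of order $3$, and pinning down exactly which permutation of $\Irr(H')$ the outer automorphism of $H'$ induces --- since this is precisely the data that must later be made to match the local structure of $G=2.\mathsf{HS}$. The need for this compatibility is the same subtlety recorded in {\bf\ref{twoequivalences}}(a) and in \cite[6.14.Question]{KoshitaniKunugiWaki2008}.
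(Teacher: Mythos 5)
Your proposal is correct, but it takes a genuinely different route from the paper: the paper's proof of this lemma is literally ``(i) and (ii) follow from an explicit computation with {\sf GAP}, the rest is easy'', whereas you derive everything by hand. Your identification $H'=N_{\mathfrak A_8}(P)=P\rtimes D_8$ with $D_8\leqslant\GL_2(3)$ the group of signed permutation matrices, the two $D_8$-orbits on the four order-$3$ subgroups via the image in $\PGL_2(3)\cong\mathfrak S_4$, the character table via Clifford theory over $P$, the outer automorphism via $N_{\GL_2(3)}(D_8)=SD_{16}$ acting on $H'\vartriangleleft P\rtimes SD_{16}$, the decomposition matrix from $\IBr(H')=\Irr(D_8)$ (inflation along $O_3(H')=P$) with $C=D^{\mathrm t}D$, and the vertices via Kn\"orr all check out (I verified the orbit count, the characterisation of $\chi_{1b}$ by an order-$4$ element in its kernel, sample rows such as $\chi_{4a}=1a+1c+2$, and the compatibility of the stated permutation of $\Irr(H')$ with the induced permutation $2B\leftrightarrow 2C$, $3A\leftrightarrow 3B$, $6A\leftrightarrow 6B$ of classes). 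Two small points deserve a remark: in (ii), the step from ``the restriction map $\Aut(H')\to\Aut(P)$ has image in $SD_{16}$'' to ``$|\Aut(H')/\Inn(H')|=2$'' also needs that the kernel of this restriction consists of inner automorphisms, which follows from $H^1(D_8,P)=0$ by coprimality (and surjectivity onto $SD_{16}$ is exactly your conjugation construction inside $P\rtimes SD_{16}$); and in (iv) Kn\"orr's theorem works (and matches how the paper handles {\bf\ref{CharactersOfA8}}(ii) and {\bf\ref{DecompositionMatrixOf2HS}}(ii)), though the still simpler fact that $O_3(H')$ lies in every vertex of every indecomposable module would suffice here. What your approach buys is independence from the computer and, more importantly, transparency about exactly which labelling choices ($Q$ versus $R$, $1c$ versus $1d$, and the outer automorphism swapping them) are conventional — precisely the data whose later compatibility with the local structure of $G=2.\mathsf{HS}$ is the delicate point of {\bf\ref{twoequivalences}}(a) and {\bf\ref{J4problem}}; what the paper's {\sf GAP} computation buys is brevity and consistency with the explicit computational setting of {\bf\ref{Computations}}, where the same fixed permutation realisation is reused for all subsequent restriction and induction computations.
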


\begin{proof}
(i) and (ii) follow from an explicit computation with {\sf GAP} \cite{GAP},
the rest is easy.
\end{proof}

\begin{Notation}\label{NotationBprime}
We use the notation
$\chi_{1a}, \cdots , \chi_{4d}$ and
$1a = k_{H'}, 1b, 1c, 1d, 2$ as in {\bf\ref{CharacterTableOfBprime}},
where the numbers mean the degrees (dimensions) of characters
(modules).
\end{Notation}

\begin{Lemma}\label{TrivialSourceModulesInBprime}
The block algebra $B' = kH' = k[P \rtimes D_8]$ has exactly
$18$ non-isomorphic trivial source
modules over $k$. In fact, they are given in the
following list, where the diagrams are Loewy and socle series:
\smallskip
\begin{enumerate} \renewcommand{\labelenumi}{\rm{(\roman{enumi})}}
    \item 
Five PIM's $P(1a)$, $P(1b)$, $P(1c)$, $P(1d)$, $P(2)$:
\smallskip
{\rm
$$
\boxed{\begin{matrix}
1a \\
2 \\
1a \ 1c \ 1d \\
2 \\
1a
\end{matrix}},
\qquad
\boxed{\begin{matrix}
1b \\
2 \\
1b \ 1c \ 1d \\
2 \\
1b 
\end{matrix}},
\qquad
\boxed{\begin{matrix}
1c \\
2 \\
1a \ 1b \ 1c \\
2 \\
1c
\end{matrix}},
\qquad
\boxed{\begin{matrix}
1d \\
2 \\
1a \ 1b \ 1d \\
2 \\
1d
\end{matrix}},
\qquad
\boxed{\begin{matrix}
2 \\ 
1a \ 1b \ 1c \ 1d \\
2 \ 2 \ 2 \\
1a \ 1b \ 1c \ 1d \\
2
\end{matrix}}.
$$
}
\smallskip
    \item 
Five trivial source modules with vertex $P$: 
The simple modules $1a$, $1b$, $1c$, $1d$, $2$.
\smallskip
   \item
Eight trivial source modules with cyclic vertex of order $3$,
where we also give the associated trivial source characters,
see {\bf\ref{Scott}}:
\smallskip
$$
\begin{matrix}
\boxed{
  \begin{matrix}
  1a \ \ 1d \\
     2      \\
  1a \ \ 1d 
  \end{matrix}
  }
            \\
  \updownarrow \\
\chi_{1a}+\chi_{1d}+\chi_{4c}
\end{matrix},
\qquad
\begin{matrix}
  \boxed{
  \begin{matrix}
  1b \ \ 1c \\
     2      \\
  1b \ \ 1c \\
  \end{matrix}
  }
  \\
  \updownarrow  \\ 
   
\chi_{1b}+\chi_{1c}+\chi_{4d}
\end{matrix},
\qquad
\begin{matrix}
  \boxed{
  \begin{matrix}
     2 \\
  1a \ \ 1d \\
     2     \\
  \end{matrix}
  }
   \\
  \updownarrow \\
\chi_2+\chi_{4c}
\end{matrix},
\qquad
\begin{matrix}
\boxed{
 \begin{matrix}
     2 \\
  1b \ \ 1c\\
     2  
 \end{matrix}
 }
            \\
  \updownarrow \\
\chi_2+\chi_{4d}
\end{matrix},
$$ 
\bigskip
$$
\begin{matrix}
   \boxed{
   \begin{matrix}
  1a \ \ 1c \\
     2      \\
  1a \ \ 1c \\
   \end{matrix}
   }  
  \\  
\updownarrow \\
\chi_{1a}+\chi_{1c}+\chi_{4a}
\end{matrix},
\qquad
\begin{matrix}
  \boxed{ 
  \begin{matrix}
  1b \ \ 1d \\
     2      \\
  1b \ \ 1d \\
   \end{matrix}
    }
    \\
  \updownarrow \\
\chi_{1b}+\chi_{1d}+\chi_{4b}
\end{matrix},
\qquad
\begin{matrix}
  \boxed{
  \begin{matrix}
     2 \\
  1a \ \ 1c \\
     2     \\
   \end{matrix}
   }
   \\
  \updownarrow \\
\chi_2+\chi_{4a}
\end{matrix},
\qquad
\begin{matrix}
   \boxed{
   \begin{matrix}
     2 \\
  1b \ \ 1d\\
     2     \\
   \end{matrix}
        }
     \\
  \updownarrow \\
\chi_2+\chi_{4b}
\end{matrix}.
$$
\end{enumerate}
\end{Lemma}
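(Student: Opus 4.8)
The plan is to enumerate the indecomposable trivial source $B'$-modules by going through the $H'$-conjugacy classes of $3$-subgroups of $P$, since by definition every indecomposable trivial source module has a $p$-subgroup as vertex and, by the Green correspondence, is determined by its vertex $R$ together with an indecomposable trivial source $k[N_{H'}(R)/R]$-module of trivial vertex — that is, a projective indecomposable $k[N_{H'}(R)/R]$-module. The $3$-subgroups of $P\cong C_3\times C_3$ fall into four classes under $H'=P\rtimes D_8$: the trivial group, the full group $P$, and (using that the subgroups of order $3$ split into two $H'$-classes, as recorded in \ref{NotationA8} and \ref{CharacterTableOfBprime}(ii)) two classes of cyclic subgroups of order $3$, call them $Q_1$ and $Q_2$. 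For vertex $1$ one gets the five PIMs listed in (i), whose Loewy structure one reads off from the Cartan matrix in \ref{CharacterTableOfBprime}(iii) together with the fact that $H'$ has a normal Sylow $3$-subgroup, so $B'=k[P\rtimes D_8]$ and its PIMs are the projective covers of the $D_8$-modules inflated and then induced appropriately; symmetry of $P(2)$ and the self-duality of all these modules pins down the pictures. For vertex $P$ the inertial quotient acts, $N_{H'}(P)=H'$, $H'/P\cong D_8$ has a unique simple projective module only in the sense that $k[D_8]$ is not semisimple — but here the relevant statement is that the trivial source modules with vertex $P$ are exactly the $p$-permutation modules that are $P$-projective and indecomposable with vertex $P$, and since $P\trianglelefteq H'$ these are precisely the simple $B'$-modules $1a,1b,1c,1d,2$ (each is a direct summand of $k_P\!\uparrow^{H'}$-type modules; more directly, each simple $kH'$-module is inflated from $D_8$, hence is a trivial source module with vertex $P$). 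This is the content of (ii).

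The real work is in (iii), the eight trivial source modules with cyclic vertex of order $3$. For each of the two classes $Q_i$ of order-$3$ subgroups one computes $N_{H'}(Q_i)$ and the quotient $\bar H_i := N_{H'}(Q_i)/Q_i$; since $P\rtimes D_8$ has $D_8$ acting with the two non-central involutions (up to the $D_8$-action) fixing the respective $Q_i$, one finds $N_{H'}(Q_i)/Q_i\cong C_3\times C_2$ or similar — in any case a group with a normal Sylow $3$-subgroup $C_3$ and cyclic $3'$-part, so it has exactly two projective indecomposable modules (the two blocks, or rather the PIMs of $k[C_3\rtimes C_2]$), each of $k$-dimension $3$. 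Green correspondence $f_i:\mathrm{mod}\,k\bar H_i \to \mathrm{mod}\,kN_{H'}(Q_i)$-trivial-source $\to \mathrm{mod}\,kH'$ then sends each of these two PIMs to an indecomposable trivial source $B'$-module with vertex $Q_i$; inducing up from $N_{H'}(Q_i)$ to $H'$ and stripping off projective summands gives the four modules per class, eight in total. Their Loewy/socle diagrams are forced by: self-duality (trivial source modules over $\mathcal O$ lift and the character is real here, cf.\ \ref{Scott}), the known composition factors coming from the Brauer construction / Green correspondence, and the constraint that they embed as (non-projective) summands of induced modules from $Q_i$. The associated ordinary characters in the list are then obtained from \ref{Scott}(ii): one lifts each module to an $\mathcal O H'$-lattice and decomposes its character using the character table \ref{CharacterTableOfBprime}(i) and the decomposition matrix \ref{CharacterTableOfBprime}(iii) — e.g.\ a uniserial module with factors $1a,2,1a$ has Brauer character $2\cdot\varphi_{1a}+\varphi_2$, whose only lift with the right vertex and trivial source property (real, positive on $3$-singular classes in the right way) is $\chi_{1a}+\chi_{1d}+\chi_{4c}$, and so on.

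Finally one checks completeness: every indecomposable trivial source $B'$-module has vertex one of $1$, $Q_1$, $Q_2$, $P$ (no subgroup of $P$ of order $3$ lies outside the two classes, and there is nothing between order $3$ and order $9$), and for each vertex the count of trivial-source modules equals the number of projective indecomposable $k[N_{H'}(R)/R]$-modules, namely $5+1+1$-per-PIM-count. Summing $5$ (vertex $1$) $+\,5$ (vertex $P$) $+\,4+4$ (the two cyclic classes, $4$ each because $|\bar H_i|_{3'}=2$ gives two PIMs of $k\bar H_i$ but inducing to $H'$ and stripping projectives yields two non-isomorphic indecomposables with vertex $Q_i$, doubled over the two classes $Q_1,Q_2$ after accounting for the Green correspondent being induced) gives exactly $18$; one must be a little careful that the induction from $N_{H'}(Q_i)$ to $H'$ does not identify or split these further, which is where one invokes \ref{SourceVertex}-type bookkeeping or a direct {\sf GAP}/{\sf MeatAxe} computation as mentioned in the Contents. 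The main obstacle is precisely this last point — getting the correct count and the precise Loewy diagrams for the eight cyclic-vertex modules — since the combinatorics of which $D_8$-twist fixes which $Q_i$ and how the induced modules decompose is fiddly; in practice this is settled by the explicit computation with the character table in \ref{CharacterTableOfBprime} together with \ref{Scott}, which is why (i) and (ii) are called ``easy'' while (iii) carries the weight.
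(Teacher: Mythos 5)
Your overall strategy---enumerating the indecomposable trivial source modules by their vertices $1$, $Q$, $R$, $P$, and using the standard parametrization by the projective indecomposable modules of $k[N_{H'}(R)/R]$---is exactly the route the paper takes (it cites \cite[Chap.4, Exc.10]{NagaoTsushima} for this, and parts (i)/(ii) are handled essentially as you describe). However, your treatment of part (iii), which you yourself identify as carrying the weight, contains a genuine error. Since the four subgroups of order $3$ of $P$ fall into exactly two $H'$-classes, each class has length $2$, so the stabiliser of $Q_i$ in $D_8$ has order $4$; hence $N_{H'}(Q_i)=P\rtimes S_i$ with $|S_i|=4$, and $N_{H'}(Q_i)/Q_i$ has order $12$, with normal Sylow $3$-subgroup $P/Q_i\cong C_3$ and abelian quotient of order $4$. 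Therefore $k[N_{H'}(Q_i)/Q_i]$ has \emph{four} projective indecomposable modules (its simple modules are the four linear characters of $S_i$), and the bijection you quoted at the outset immediately yields four trivial source $B'$-modules with vertex $Q_i$, hence eight in total. Your assertion that $N_{H'}(Q_i)/Q_i\cong C_3\times C_2$ ``or similar'' with exactly two PIMs is a miscomputation of the normaliser, and the patch you apply afterwards---two PIMs per class ``but inducing to $H'$ and stripping projectives yields two non-isomorphic indecomposables \dots doubled''---is not a valid argument: the parametrization already counts the vertex-$Q_i$ modules, and with your normaliser it would give only four cyclic-vertex modules altogether, contradicting the list of eight. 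So your final count of $18$ is reached by an unjustified doubling rather than by the correct local computation.

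A smaller gap: in (i) the Cartan matrix together with self-duality does not by itself pin down the displayed Loewy and socle series; one needs $\mathrm{Ext}^1_{kH'}(1x,1y)=0$ for all $x,y\in\{a,b,c,d\}$, which the paper extracts from the Ext-quiver with relations of $k[(C_3\times C_3)\rtimes D_8]$ given in \cite[Section 4, Case 2]{Okuyama1997}; the same vanishing is what the paper uses to see that the eight modules in (iii), whose trivial source characters drop out of the parametrization together with {\bf\ref{Scott}}, are indecomposable with the stated structure. Your deferral of these structural points to an explicit {\sf GAP}/{\sf MeatAxe} computation would be acceptable in principle, but the normaliser miscount above must be repaired for the enumeration in (iii) to stand.
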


\begin{proof}
(i)
The structure of the PIM's is immediate as soon as we know that
${\mathrm{Ext}}_{kH'}^1(1x,1y)=0$ for all $x,y\in\{a,b,c,d\}$.
This in turn follows from the Ext-quiver of $B'$, which is given
as a quiver with relations in \cite[Section 4, Case 2]{Okuyama1997}.

To find the non-projective trivial source modules, we employ
\cite[Chap.4, Exc.10]{NagaoTsushima}. From that (ii) is immediate.
Moreover, this also yields the trivial source characters given in (iii),
from which it is easy to see, using the vanishing of 
${\mathrm{Ext}}_{kH'}^1(1x,1y)$ again, that the associated modules are
indecomposable.
\end{proof}

\begin{Lemma}\label{StableEquivalenceInA8}
An $(A', B')$-bimodule $\mathcal M'$ defined by
$\mathcal M' = f_{(G' \times G', \Delta P, G' \times H')}(A')$
induces a splendid stable equivalence of Morita type
between $A'$ and $B'$, namely by
$$ \mathcal F': {\mathrm{mod}}{\text{-}}A' \rightarrow
                {\mathrm{mod}}{\text{-}}B':
 X_{A'} \mapsto (X \otimes_{A'}{\mathcal M'})_{B'} .$$
In particular, $\mathcal F'$ fulfills the
assumptions of {\bf\ref{SourceVertex}}, and hence its assertions as well.
\end{Lemma}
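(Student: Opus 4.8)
The plan is to realise the $(A',B')$-bimodule $\mathcal{M}'$ as the Green correspondent of the regular bimodule and then invoke the general machinery that converts this into a splendid stable equivalence of Morita type. First I would recall that the principal block $A'$ of $kG'=k\mathfrak{A}_8$ and the principal block $B'=kH'$ of $kH'=k[P\rtimes D_8]$ share the common defect group $P=C_3\times C_3$, and that $H'=N_{G'}(P)$. Viewing $A'$ as the $(A',A')$-bimodule $_{A'}A'_{A'}$, i.e.\ as a $k[G'\times G']$-module with vertex $\Delta P$ and trivial source, the Green correspondence $f=f_{(G'\times G',\,\Delta P,\,G'\times H')}$ applied to $A'$ produces a trivial source $k[G'\times H']$-module $\mathcal{M}'$ that is $\Delta P$-projective. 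The $(A',B')$-bimodule structure is inherited by restricting the right action to $H'$ and then multiplying by the block idempotent $1_{B'}$; since $B'=kH'$ is the whole principal block, no idempotent truncation is actually needed on the $H'$-side.

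The key step is to verify the hypotheses of a Brou\'e-style criterion guaranteeing that the Green correspondent of the regular bimodule induces a stable equivalence of Morita type. The standard such statement (see e.g.\ the results of Brou\'e, Rickard, and Linckelmann on splendid equivalences) requires that for every nontrivial subgroup $1\ne Q\le P$, the Brauer correspondent blocks have the same Brauer pairs / the fusion of $P$ in $G'$ and in $H'$ agree on such subgroups, or more concretely that the relevant local blocks at $Q$ match up. Here this is essentially automatic: $P$ is abelian, so $C_{G'}(P)\le N_{G'}(P)=H'$, and for the two classes of subgroups of order $3$ one checks, using the local data already recorded in {\bf\ref{NotationA8}} and {\bf\ref{CenQ}}, that the Brauer correspondents in $G'$ and $H'$ coincide. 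Granting this, the bimodule $M'=f(A')$ restricts projectively on each side (as a consequence of $\Delta P$-projectivity together with $C_{G'}(Q)\le H'$), and the defect of the bimodule $M'\otimes_{B'}(M')^\vee - A'$ is supported off the diagonal, which forces
$$
{}_{A'}(\mathcal{M}'\otimes_{B'}\mathcal{M}'^\vee)_{A'}\;\cong\;{}_{A'}A'_{A'}\oplus(\mathrm{proj}),
\qquad
{}_{B'}(\mathcal{M}'^\vee\otimes_{A'}\mathcal{M}')_{B'}\;\cong\;{}_{B'}B'_{B'}\oplus(\mathrm{proj}).
$$
That is exactly a stable equivalence of Morita type, and because $\mathcal{M}'$ is by construction a trivial source $\Delta P$-projective $k[G'\times H']$-module, it is a \emph{splendid} such equivalence.

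It remains only to observe that the functor $\mathcal{F}'=(-\otimes_{A'}\mathcal{M}')$ satisfies the hypotheses of {\bf\ref{SourceVertex}}: conditions (1) and (2) of that lemma demand that $\mathcal{M}'$ be a summand of $1_{A'}\cdot kG'\cdot 1_{B'}$ and that the pair $(\mathcal{M}',\mathcal{M}'^\vee)$ induce a stable equivalence. The first is immediate from the definition of the Green correspondent: $f(A')$ is, up to a projective summand, the $(G'\times H')$-component of $A'\!\downarrow_{G'\times H'}$, which is a direct summand of $kG'\!\downarrow_{G'\times H'}$, hence of $1_{A'}\cdot kG'\cdot 1_{B'}$ after projection onto the principal blocks; and the second is what we have just established. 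The main obstacle is genuinely the verification of the local block-theoretic compatibility that makes the Green correspondent of the regular bimodule into a stable equivalence --- i.e.\ checking the Brauer correspondents at the two classes of order-$3$ subgroups --- but since $P$ is abelian this reduces to a short inspection of the centralisers recorded in the local data of \S\ref{***}, so I expect it to be routine rather than deep.
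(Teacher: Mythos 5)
Your overall route differs from the paper's: the paper disposes of this lemma by direct citation of Okuyama (Example 4.3 of \cite{Okuyama1997} together with Corollary 2 of \cite{Okuyama2000}), where the stable equivalence induced by the Green correspondent for the principal $3$-block of $\mathfrak A_8$ and its splendidness are already established; no local analysis for $G'=\mathfrak A_8$ is redone in the paper. Your plan to reprove it by the gluing machinery (Brou\'e/Rouquier/Linckelmann, i.e.\ \cite[Theorem 3.1]{Linckelmann2001}, the same tool the paper uses for $2.{\sf HS}$ in {\bf\ref{StableEquivalenceFor2HS}}) is viable in principle, but as written it has a genuine gap at exactly the step you declare ``essentially automatic.''

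The hypothesis of the gluing theorem is not that ``the Brauer correspondents in $G'$ and $H'$ coincide,'' nor does abelianness of $P$ (or agreement of fusion, which here is automatic since $H'=N_{G'}(P)$ controls fusion) suffice. What must be checked is that for every nontrivial $Q\leq P$ the Brauer construction $\mathcal M'(\Delta Q)$ induces a Morita equivalence between the corresponding local blocks of $kC_{G'}(Q)$ and $kC_{H'}(Q)$. For the class of subgroups generated by $3$-cycles one has $C_{G'}(Q)\cong Q\times\mathfrak A_5\not\leq H'$, so the condition there is a genuine statement about blocks with cyclic defect group $C_3$: one needs that the specific trivial source bimodule arising from the Green correspondence (not merely some abstract equivalence) induces a Morita equivalence between the principal block of $k[C_3\times\mathfrak A_5]$ and $k[C_{H'}(Q)]$; this requires input such as \cite[Theorem 1.2]{KoshitaniKunugi2010} on trivial source modules in cyclic blocks, exactly as the paper invokes in {\bf\ref{A5SemidirectC4}}(iv) and {\bf\ref{LocalPuig}} for the analogous local situation inside $2.{\sf HS}$ (together with \cite{KoshitaniLinckelmann2005} for indecomposability of the relevant bimodule). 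Your sentence ``the defect of the bimodule $\mathcal M'\otimes_{B'}\mathcal M'^{\vee}-A'$ is supported off the diagonal, which forces \dots'' is not an argument but a restatement of the conclusion one is trying to reach; without the verified local Morita equivalences it does not follow. So either carry out that local verification (which is routine only after quoting the cyclic-defect results), or do as the paper does and cite Okuyama --- noting that the paper needs Okuyama's splendid derived equivalence between $A'$ and $B'$ later in the proof of {\bf\ref{MainTheorem}} anyway, so the citation is not avoidable merely by reproving this lemma.
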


\begin{proof}
This follows from 
\cite[Example 4.3]{Okuyama1997} and
\cite[Corollary 2]{Okuyama2000}.
\end{proof}

\begin{Lemma}\label{GreenCorrespondentA8}
Let $f'$ and $g'$ 
be the mutually inverse Green correspondences with respect to $(G', P, H')$.
Then the Green correspondents of simple modules are the following:
\smallskip
$$
\begin{array}{ccccc|ccccc}
g'(1a) &=   &\boxed{k_{G'}}  &\leftrightarrow &\chi'_1 
& & & f'(k_{G'}) &= &1a \\
& & & & & & & & \\
g'(1b)     &=   &\boxed{7} &\leftrightarrow &\chi'_{7} 
& & & f'(7) &= &1b \\
& & & & & & & & \\
g'(1c)     &=   
                 &
            \boxed{ \begin{matrix} 13  \\
                              k_{G'} \ \ 7 \\
                              13
                    \end{matrix}  }
                 &\leftrightarrow  &\chi'_{14} + \chi'_{20} 
& & & f'(13) &=  
&\boxed{\begin{matrix} 1c \ \\ \ 2 \ \\ 1c \ \end{matrix} } \\
& & & & & & & & \\
g'(1d)     &=   &\boxed{28}   &\leftrightarrow  &\chi'_{28} 
& & & f'(28) &= &1d
\\
& & & & & & & & \\
g'(2)      &=   &\boxed{35}  &\leftrightarrow  &\chi'_{35}\ 
& & & f'(35) &= &2
\end{array}
$$
On the left hand we also give the associated trivial source characters,
see {\bf\ref{Scott}}.
Recall that by considering $H'$ just as an abstract group
$\{1c,1d\}$ are 
indistinguishable, see {\bf\ref{CharacterTableOfBprime}},
but now note that by fixing $H'\leqslant G'$
and specifying $f'$, this defines $1c$ and $1d$ uniquely.
\end{Lemma}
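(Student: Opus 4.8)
The goal is to identify the Green correspondents $g'(1a),\dots,g'(2)$ of the five simple $B'$-modules and to pin down which simple $kG'$-module is $g'(1c)$ versus $g'(1d)$. First I would invoke Lemma~\ref{StableEquivalenceInA8}: the bimodule $\mathcal M'=f_{(G'\times G',\Delta P,G'\times H')}(A')$ induces a splendid stable equivalence of Morita type $\mathcal F'$, so it satisfies the hypotheses of Lemma~\ref{SourceVertex}. By Lemma~\ref{CharactersOfA8}(ii) every simple $kG'$-module in $A'$ has vertex $P$, and by Lemma~\ref{CharacterTableOfBprime}(iv) every simple $kH'$-module in $B'$ has vertex $P$; since $H'=N_{G'}(P)$, the set $\mathfrak A(G',P,H')$ contains $P$, so Lemma~\ref{SourceVertex}(v)--(vi) tells us that on these simple modules the stable equivalence $\mathcal F'$ \emph{agrees} with the Green correspondence $f'$ up to projective summands. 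Thus it suffices to compute $(S\otimes_{A'}\mathcal M')_{B'}$, or equivalently $g'(T)\otimes_{B'}(\mathcal M')^\vee$, for the relevant simples and read off the non-projective indecomposable summand.

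\textbf{Key steps.} (1) Compute the Green correspondents at the level of trivial source modules / characters. Four of the five simple $B'$-modules, namely $1a,1b,1d$ (and likewise on the $A'$ side $k_{G'},7,28$) are $1$-dimensional or come from a cyclic-vertex trivial source module, and the simple modules $k_{G'},7,28,35$ in $A'$ are themselves trivial source modules (they are simple with vertex $P$, hence summands of $k\!\uparrow_P^{G'}$, or can be tracked via their Brauer characters lifting to ordinary characters). Using Lemma~\ref{Scott} together with the explicit trivial source module/character list of Lemma~\ref{TrivialSourceModulesInBprime} and the known decomposition matrix of $A'$ in Lemma~\ref{CharactersOfA8}(i), I would match up trivial source characters: $g'$ sends a trivial source $B'$-module $T$ to the Green correspondent $g'(T)$, whose ordinary character is determined by $\chi_{\widehat T}$ together with the Green-correspondence formula $\chi_{\widehat T}\!\uparrow^{G'}=\chi_{\widehat{g'(T)}}+(\text{characters of }H'\text{-summands supported on smaller subgroups})$. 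This forces $g'(1a)=k_{G'}$, $g'(1b)=7$, $g'(1d)=28$, $g'(2)=35$, and $g'(1c)=13{\atop k_{G'}\ 7}{\atop 13}$ (the uniserial $kG'$-module of dimension $33$ with that Loewy structure) with associated trivial source character $\chi'_{14}+\chi'_{20}$. (2) For $1c$, since $\mathrm{Ext}^1_{kH'}(1x,1y)=0$ forces the trivial source module of vertex of order $3$ attached to $\chi_{1a}+\chi_{1c}+\chi_{4a}$ to be uniserial of shape $1a{\atop 2}{\atop 1a}$-type (more precisely $1a\ 1c/2/1a\ 1c$), one checks directly, e.g. by restricting from $G'$ to $H'$ or by computing in the MeatAxe, that the Green correspondent of the $33$-dimensional uniserial $kG'$-module is uniserial $1c/2/1c$; this is exactly $f'(13)$. (3) Finally, break the $\{1c,1d\}$ symmetry: as noted in Lemma~\ref{CharacterTableOfBprime}(ii), $1c$ and $1d$ are interchanged by the outer automorphism of $H'$ and are indistinguishable as abstract-group data, so one \emph{fixes} the embedding $H'\leqslant G'$ and the choice of $f'$, and the computation in step (2) — performed with a concrete matrix representation of $G'=\mathfrak A_8$ and its Sylow normaliser — then unambiguously labels which simple is $1c$ and which is $1d$; by definition $1c$ is the one with $g'(1c)$ the $33$-dimensional uniserial module, $1d$ the one with $g'(1d)=28$.

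\textbf{Main obstacle.} The routine parts — computing the PIMs, the trivial source lists, the restriction of inductions of characters — are straightforward given the earlier lemmas and a character-theoretic computation. The genuine subtlety, and where I expect the real work to lie, is step (3): making the identification of $1c$ versus $1d$ (and correspondingly of the two $4$-dimensional characters $\chi_{4a},\chi_{4c}$, etc.) \emph{canonical} and \emph{compatible} with the concrete embedding $H'\leqslant G'$. As the authors flag in Remark~\ref{twoequivalences}(a), this is precisely the kind of issue that separates a Morita equivalence from a Puig equivalence later on, so one must carry an explicit matrix-representation computation (using MeatAxe and the submodule-lattice/socle-series tools cited in the Contents) rather than argue abstractly; the Ext-quiver vanishing $\mathrm{Ext}^1_{kH'}(1x,1y)=0$ is the key structural input that makes the relevant $B'$-modules uniserial and hence makes the computed Green correspondent recognizable, but the bookkeeping of which uniserial module is which is the delicate point.
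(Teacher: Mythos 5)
The paper's own proof of this lemma is just a citation of Waki's thesis and Okuyama's Example 4.3, so your attempt to reconstruct it by hand is necessarily a different route; your overall plan (identify Green correspondents via trivial source lifts, Scott's lemma {\bf\ref{Scott}}, explicit restrictions, with $1c,1d$ labelled by fixing $H'\leqslant G'$ and $f'$) is the right kind of argument and parallels what the paper actually does for $2.${\sf HS} in Sections 5--6. However, your key step (1) contains a genuine gap: you justify that $k_{G'},7,28,35$ are trivial source modules by saying they are ``simple with vertex $P$, hence summands of $k{\uparrow}_P^{G'}$''. That implication is false: having vertex $P$ does not force the source to be trivial, and the module $13$ in this very block is a counterexample (it has vertex $P$ by {\bf\ref{CharactersOfA8}}(ii), yet its Green correspondent is the non-simple uniserial module $1c\,/\,2\,/\,1c$, so $13$ is not a trivial source module); the same phenomenon occurs for $S_3$ on the $2.${\sf HS} side. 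The assertion that $7$, $28$, $35$ have trivial source (equivalently, that $f'$ sends them to simple $B'$-modules) is precisely the non-trivial content of the lemma and requires a proof --- for instance by exhibiting them inside suitable induced permutation modules, as the paper does for $S_1,S_2,S_4,S_5$ via the subgroups $U_3(5)$, ${\sf M}_{11}$, $2.{\sf M}_{22}$ in {\bf\ref{TrivialSourceS1S2}}--{\bf\ref{TrivialSourceS5}}, or by an explicit {\sf MeatAxe} decomposition of their restrictions to $H'$, or by citing Waki/Okuyama as the paper does. Your fallback ``or can be tracked via their Brauer characters lifting to ordinary characters'' is likewise not a valid criterion for trivial source.

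A second, smaller gap: the claim ``this forces $g'(1a)=k_{G'}$, $g'(1b)=7$, $g'(1d)=28$, $g'(2)=35$'' from matching trivial source characters is not yet an argument. To recognise $\chi_{\widehat{g'(T)}}$ inside $\chi_{\widehat T}{\uparrow}^{G'}$ you must know which character sums correspond to trivial source $kG'$-modules with cyclic vertex and to projectives, i.e.\ local data at $N_{G'}(Q)$ and $N_{G'}(R)$ that none of the earlier lemmas supply on the $G'$ side; you would either have to compute this (the analogue of {\bf\ref{TrivialSourceModulesInBprime}} for $A'$) or replace the character bookkeeping by a direct decomposition of $S{\downarrow}_{H'}$ for each simple $S$, which is in effect what the cited sources do. Finally, your step (3) is correct but is largely a matter of definition rather than the hard point: as {\bf\ref{Computations}} explains, $1d$ is simply identified as $f'(28)$ by an explicit restriction once $H'\leqslant G'$ is fixed; the genuinely delicate identification of this kind is the later one, $1\delta=1d$ in {\bf\ref{match2delta2d}}, which needs the embedding $G'\leqslant G$.
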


\begin{proof}
This follows from \cite[Theorem]{Waki1989} and
\cite[Example 4.3]{Okuyama1997}.
\end{proof}

\section{$3$-Local structure for $2.${\sf HS}}\label{**}

\begin{Notation}\label{Notation2}
\mbox{}From now on, we assume that $G$ is the covering group
$2.${\sf HS} of the sporadic simple Higman-Sims group {\sf HS},
and hence $|G| = 2^{10}{\cdot}3^2{\cdot}5^3{\cdot}7{\cdot}11$,
see \cite[p.80]{Atlas}.
\end{Notation}

\begin{Lemma}\label{BrouesConjectureFor2HS}
We obtain the following:
\begin{enumerate}
  \renewcommand{\labelenumi}{\rm{(\roman{enumi})}}
      \item
In order to prove Brou{\'e}'s abelian defect group conjecture 
{\bf\ref{ADGC}} and Rickard's conjecture {\bf\ref{RickardConjecture}}
for $G = 2.${\sf HS}, it suffices to prove them for the case $p = 3$.
    \item
There exists a unique faithful
$3$-block $A$ with non-cyclic abelian 
defect group $P$, and $P$ is elementary abelian of order $9$,
and in order to prove 
Brou{\'e}'s abelian defect group conjecture for $G = 2.${\sf HS},
it suffices to prove it for this $3$-block $A$.
\end{enumerate}
\end{Lemma}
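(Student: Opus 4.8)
The plan is to reduce the general abelian defect group conjecture for $G = 2.{\sf HS}$ to a single $3$-block, by ruling out all other possibilities prime by prime and block by block. For part (i), I would consult the character table and block distribution of $G = 2.{\sf HS}$ in \cite[p.80]{Atlas} together with the modular data in \cite{ModularAtlas,ModularAtlasProject}. Since $|G| = 2^{10}{\cdot}3^2{\cdot}5^3{\cdot}7{\cdot}11$, the primes to consider are $2, 3, 5, 7, 11$. For $p \in \{7, 11\}$ the Sylow $p$-subgroup is cyclic of order $p$, so every $p$-block of $G$ has either defect zero (nothing to prove) or cyclic defect group, and for cyclic defect groups {\bf\ref{ADGC}} and {\bf\ref{RickardConjecture}} are already known by \cite{Linckelmann1991,Rickard1989,Rouquier1995,Rouquier1998}, as recalled in the introduction. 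For $p = 2$ the defect groups are non-abelian (the Sylow $2$-subgroup has order $2^{10}$), so the abelian defect group conjecture imposes no requirement beyond blocks of smaller non-abelian or abelian defect; one checks from the block data that no $2$-block of $G$ has an abelian defect group of positive order other than possibly defect zero, so there is nothing to prove. For $p = 5$ the Sylow $5$-subgroup has order $5^3$; I would verify from the block data that the $5$-blocks of positive defect again do not have abelian defect groups (the Sylow $5$-subgroup of ${\sf HS}$, hence of $2.{\sf HS}$, is non-abelian of order $125$), so again no condition arises. This leaves only $p = 3$, proving (i).

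For part (ii), fix $p = 3$. Since $3^2 \,\|\, |G|$, the Sylow $3$-subgroup $P$ has order $9$ and is therefore abelian; one reads off from the $2$-modular versus ordinary data (or from the fact that Sylow $3$-subgroups of ${\sf HS}$ are elementary abelian, cf.\ \cite[p.22]{Atlas} and {\bf\ref{NotationA8}}) that $P \cong C_3 \times C_3$ is elementary abelian, and that the central involution $z$ of $G$ does not lie in any $3$-subgroup, so $z$ acts on the set of $3$-blocks. The $3$-blocks of $G$ split into those on which $z$ acts trivially --- these are exactly the $3$-blocks of ${\sf HS} = G/\langle z\rangle$, inflated --- and the faithful ones, on which $z$ acts as $-1$. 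For the non-faithful blocks, {\bf\ref{ADGC}} follows from the corresponding statement for ${\sf HS}$; the principal $3$-block of ${\sf HS}$ has defect group $P$ and {\bf\ref{ADGC}}, {\bf\ref{RickardConjecture}} hold for it by \cite[(0.2)Theorem]{KoshitaniKunugi2002}, and the remaining $3$-blocks of ${\sf HS}$ of positive defect are accounted for in \cite{Koshitani2003} (the non-principal block there has defect group $P$ and its validity is established), while $3$-blocks of defect zero are trivial. So it remains to pin down the faithful $3$-blocks. Going through the faithful irreducible characters of $G$ listed in \cite[p.80]{Atlas} and computing their $3$-defects, one finds that exactly one faithful $3$-block $A$ has positive defect; its defect group is a full Sylow $3$-subgroup, hence $P \cong C_3 \times C_3$, and all other faithful $3$-blocks have defect zero. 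This is the block $A$ whose decomposition matrix is displayed in the introduction, with $\Irr(A)$ consisting of the nine faithful characters of degrees $56, 176, 176^*, 616, 616^*, 1000, 1232, 1232^*, 1792$. Thus proving {\bf\ref{ADGC}} for $G$ reduces to proving it for this single block $A$, which is (ii).

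The main obstacle is not conceptual but bookkeeping: one must reliably extract, from the \textsf{ATLAS} and modular \textsf{ATLAS} data, the complete block decomposition of $G = 2.{\sf HS}$ at every prime dividing $|G|$, the defect groups, and --- crucially --- which defect groups are abelian. In practice this is handled by a routine computation with \textsf{GAP} \cite{GAP} using the character table library \cite{CTblLib}: one computes central characters, $p$-defects of irreducible characters, and the primitive central idempotents to obtain the blocks, then checks abelianness of each defect group against the known Sylow structure. The only point requiring a little care is the identification of the faithful $3$-block of full defect and the verification that it is the unique one of positive defect among the faithful blocks; this is immediate once the defect of each of the faithful irreducible characters has been computed.
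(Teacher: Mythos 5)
Your plan is essentially the paper's own argument: discard the primes with cyclic Sylow subgroups, check from the ATLAS/Modular ATLAS (or {\sf GAP}) block data that the blocks with non-cyclic defect groups at $p=2,5$ have non-abelian defect groups, and at $p=3$ observe that among the three blocks with non-cyclic (elementary abelian of order $9$) defect groups the two non-faithful ones are blocks of {\sf HS} already settled in the literature, leaving exactly the faithful block $A$. Two small corrections: the conjectures for the non-principal $3$-block of {\sf HS} are established in \cite[0.2 Theorem(ii)]{KoshitaniKunugiWaki2002} (the reference the paper uses), not in \cite{Koshitani2003}, which only records that this block is not Morita equivalent to any principal block; and your side claims that all remaining faithful $3$-blocks have defect zero and that no $2$- or $5$-block of positive defect has an abelian defect group are stronger than needed and need not be verified, since any such block with cyclic defect group is already covered by the cyclic-defect results you invoke.
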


\begin{proof}
(i) Since the conjecture is proved when the defect group is
cyclic, we know from {\bf\ref{Notation2}} that it is enough to check
it for the primes $p \in \{ 2,3,5 \}$.
For $p = 2$ it follows from \cite[$HS$, (mod 2)]{ModularAtlas}
that there are a couple of blocks of $G$ and both have
non-abelian defect groups.
For $p = 5$ it follows from \cite[$HS$ (mod 5)]{ModularAtlas}
that there are a couple of blocks of $G$ which have noncyclic
defect groups and the defect group is non-abelian. 

(ii) Assume that $p = 3$. Then, again by
\cite[$HS$ (mod $3$)]{ModularAtlas}, there are three
3-blocks of $G$ which have noncyclic defect groups.
Those 3-blocks have defect groups which are elementary
abelian of order 9. Two of them are non-faithful and therefore
these two blocks show up in {\sf HS}.
For the  principal $3$-block of {\sf HS}, the conjectures
have been checked by Okuyama \cite[Example 4.8]{Okuyama1997}.
For the non-principal $3$-block of {\sf HS},
they have been verified in our previous paper 
\cite[0.2 Theorem(ii)]{KoshitaniKunugiWaki2002}.
Thus, the remaining untreated case is a unique faithful $3$-block $A$ 
of $G$ with noncyclic defect group.
\end{proof}

\begin{Notation}\label{Notation3}
\mbox{}From now on, we assume $p = 3$ and we use the notation
$A$ and $P$ as in {\bf\ref{BrouesConjectureFor2HS}}, that is,
$A$ is a block algebra of $kG$ with defect group
$P \cong C_3 \times C_3$.
Set $N = N_G(P)$, and let $A_N$ be a block algebra of $kN$
which is the Brauer correspondent of $A$.
Let $(P,e)$ be a maximal $A$-Brauer pair in $G$, namely,
$e$ is a block idempotent of $kC_G(P)$ such that 
${\mathrm{Br}}_P(1_A){\cdot}e = e$, see \cite{AlperinBroue},
\cite{BrouePuig1980} and \cite[\S 40]{Thevenaz}.
Set $H = N_G(P,e)$, namely,
$H = \{ g \in N_G(P) | g^{-1}e g = e \}$.
Let $B$ be a block algebra of $kH$ which is
a Fong-Reynolds correspondent of $A_N$, see {\bf\ref{FongReynolds}};
note that there are exactly two distinct
Fong-Reynolds correspondents of $A_N$, see {\bf\ref{LocalStructureII}}(iii).
\end{Notation}

\begin{Lemma}\label{DecompositionMatrixOf2HS}
\begin{enumerate}
  \renewcommand{\labelenumi}{\rm{(\roman{enumi})}}
    \item
The $3$-decomposition matrix of $A$
is given as follows:

\medskip
\begin{center}
\begin{tabular}{r|c|ccccc}
{\rm{degree}} & \cite[p.81]{Atlas} & $S_1$ & $S_2 = {S_1}^*$ 
                            & $S_3$ & $S_4$ & $S_5$ \\
\hline
 $176$ \hspace*{0.15em} & $\chi_{26}$ & $1$  &  .    & .     &  .    &  . \\
 $176^*$& $\chi_{27}$   & .    &  $1$  & .     &  .    &  .\\
 $616$ \hspace*{0.15em} & $\chi_{28}$ & $1$  &  .    & $1$   &  .    &  . \\
 $616^*$  & $\chi_{29}$ & .    &  $1$  & $1$   &  .    &  .\\
  $56$ \hspace*{0.15em} & $\chi_{25}$ & .    &  .    & .     & $1$   &  . \\
$1000$ \hspace*{0.15em} & $\chi_{32}$ & .    &  .    & .     &  .    &  $1$\\
$1792$ \hspace*{0.15em} & $\chi_{35}$ & $1$  &  $1$  & $1$   &  .    &  $1$\\
$1232$ \hspace*{0.15em} & $\chi_{33}$ & $1$  &  .    & .     & $1$   &  $1$\\
$1232^*$  & $\chi_{34}$ & .    &  $1$  & .     & $1$   &  $1$\\
\end{tabular} 
\end{center}

\bigskip\noindent
where $S_1, \cdots, S_5$ are non-isomorphic simple $kG$-modules in $A$
whose $k$-dimensions are $176$, $176$, $440$, $56$, $1000$, respectively.
The simples $S_1$ and $S_2$ are dual to each other, while the
remaining are self-dual. There are three pairs
$(\chi_{26}, \chi_{27})$, $(\chi_{28}, \chi_{29})$ and
$(\chi_{33}, \chi_{34})$ of complex conjugate characters,
and all the other $\chi_i$'s are real-valued.
\medskip
\item
All simple kG-modules $S_1, \cdots , S_5$ in $A$ have
$P$ as a vertex.
\end{enumerate}
\end{Lemma}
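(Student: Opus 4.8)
The plan is to obtain (i) by combining the ordinary character table of $G=2.{\sf HS}$ with its $3$-modular data, and to obtain (ii) from (i) by Knörr's theorem, in complete analogy with the treatment of the principal $3$-block of $\mathfrak A_8$ in \ref{CharactersOfA8}. For (i), I would first record from \cite[p.~81]{Atlas} that $\mathrm{Irr}(A)$ consists of the nine faithful irreducible ordinary characters $\chi_{25},\chi_{26},\chi_{27},\chi_{28},\chi_{29},\chi_{32},\chi_{33},\chi_{34},\chi_{35}$, of degrees $56,176,176,616,616,1000,1232,1232,1792$ (the block distribution being the one listed in the Atlas; it is forced anyway, since by \ref{BrouesConjectureFor2HS}(ii) every faithful block other than $A$ has a cyclic, possibly trivial, defect group). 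From the Atlas one also reads off that $(\chi_{26},\chi_{27})$, $(\chi_{28},\chi_{29})$ and $(\chi_{33},\chi_{34})$ are the complex-conjugate pairs, all other $\chi_i$ in $A$ being real-valued.

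Next, using the $3$-modular data of \cite{ModularAtlas,ModularAtlasProject} accessed via \cite{CTblLib}, together with explicit computations with Brauer characters in {\sf GAP}, I would identify the five irreducible $3$-Brauer characters of $A$: the reductions modulo $3$ of $\chi_{25}$, $\chi_{26}$, $\chi_{27}$ and $\chi_{32}$ remain irreducible, of degrees $56,176,176,1000$, and there is one further irreducible Brauer character, of degree $440$. Once these five (linearly independent) Brauer characters are in hand, the decomposition matrix is forced, each $\chi_i$ restricted to the $3$-regular classes having a unique expression as a non-negative integer combination of them; in particular $\chi_{28}$ has decomposition $S_1+S_3$, whence $\dim_k S_3 = 616-176 = 440$. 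The duality assertions then follow: $S_1$ and $S_2$ are lifted by the complex-conjugate pair $(\chi_{26},\chi_{27})$ and so are interchanged by the contragredient functor; $S_4$ and $S_5$ are lifted by the real characters $\chi_{25}$ and $\chi_{32}$ and so are self-dual; and $S_3$ is self-dual because, applying complex conjugation to the relation that $\chi_{28}$ has decomposition $S_1+S_3$ and using $\chi_{28}^*=\chi_{29}$ and $S_1^*\cong S_2$, one obtains that $\chi_{29}$ has decomposition $S_2+S_3^*$, which together with the relation that $\chi_{29}$ has decomposition $S_2+S_3$ forces $S_3^*\cong S_3$.

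Finally, for (ii): since $P\cong C_3\times C_3$ is abelian and is a defect group of $A$, Knörr's \cite[3.7.Corollary]{Knoerr} applies exactly as in \ref{CharactersOfA8}(ii) and shows that $P$ is a vertex of every simple $kG$-module in $A$, in particular of each of $S_1,\dots,S_5$. The only genuinely non-formal point is the determination in (i) of the five irreducible $3$-Brauer characters of $A$ — that is, verifying that the faithful ordinary characters of degrees $56,176,176,1000$ stay irreducible modulo $3$ and that a single $440$-dimensional simple module then accounts for the remainder of the decomposition. If the required portion of the $3$-modular decomposition matrix of $2.{\sf HS}$ is not directly available from \cite{ModularAtlas,ModularAtlasProject}, it can be confirmed by explicit computation with the matrix representations in \cite{ModAtlasRep} using {\sf MeatAxe}, or by using column orthogonality together with the known ordinary characters and the non-negativity of decomposition numbers, which leave only the displayed possibility.
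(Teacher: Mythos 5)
Your proposal is correct and follows essentially the same route as the paper: part (i) is read off from the Atlas and the Modular Atlas data (the paper cites \cite[$HS$ (mod $3$)]{ModularAtlas} and \cite[p.81]{Atlas} and nothing more), and part (ii) is exactly the paper's appeal to \cite[3.7.Corollary]{Knoerr} for blocks with abelian defect group. Your extra detail on how the decomposition matrix and the duality statements are confirmed is a fleshed-out version of what the paper leaves to the cited sources.
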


\begin{proof}
(i) follows from
\cite[$HS$ (mod $3$)]{ModularAtlas}
and \cite[p.81]{Atlas},
and for (ii) see \cite[3.7.Corollary]{Knoerr}.
\end{proof}

\begin{Notation}\label{chi-Si}
We use the notation 
$\chi_{26}, \chi_{27}, \chi_{28}, \chi_{29}, 
 \chi_{25}, \chi_{32}, \chi_{35}, \chi_{33}, \chi_{34}$,
and $S_1, \cdots , S_5$ as in
{\bf\ref{DecompositionMatrixOf2HS}}.
\end{Notation}

\begin{Lemma}\label{LocalStructureI}
The following holds:
\begin{enumerate}
  \renewcommand{\labelenumi}{\rm{(\roman{enumi})}}
    \item
$N = N_G(P) = 2.\Big(2 \times (P \rtimes SD_{16})\Big)
   = P \rtimes L$ for a subgroup $L$ of $N$ with
$|L| = 2^6$ such that $L \vartriangleright Z \cong C_4$ and
$L/Z \cong SD_{16}$. Moreover, $L/Z$ acts non-trivially on $Z$,
with kernel isomorphic to $D_8$. (Recall that $SD_{16}$ has
a unique subgroup isomorphic to $D_8$.)
    \item
$C_G(P) = Z \times P$ and $L/Z$ acts faithfully on $P$. 
(Note that $SD_{16}$ is a Sylow $2$-subgroup of $\GL_2(3)$.)
    \item 
$Z = O_{3'}(C_G(P)) = O_{3'}(N)$, and we can write
${\mathrm{Irr}}(Z) = \{\psi_0, \psi_1, \psi_2, \psi_3\}$ 
such that
$\psi_i(z) = {\sqrt{-1}\,^i}$ for $i = 0, 1, 2, 3$,
where $z$ is a generator of $Z \cong C_4$,
and $\sqrt{-1}\in \mathcal O$ is a fixed $4$-th root of unity.
Moreover, we have
$T_N(\psi_i) = G$ for $i = 0, 2$, while for $j = 1,3$ we have
$$ T_N(\psi_1) = T_N(\psi_3) \lneqq G \text{ such that }
T_N(\psi_j)/C_G(P) \cong D_8 .$$
    \item
$kN=A_0 \oplus A_2 \oplus A_N$ as block algebras,
having inertial quotients $SD_{16}$, $SD_{16}$ and $D_8$, respectively.
Here, $A_0$ is the principal block algebra of $kN$, covering $\psi_0$,
while $A_2$ covers $\psi_2$; hence $A_N$ is the faithful block algebra
being the Brauer correspondent of $A$.
Moreover, $A_0 \cong A_2 \cong k[P \rtimes SD_{16}]$ as $k$-algebras,
and 
$$ A_N \cong {\mathrm{Mat}}_2(k[P \rtimes D_8]) $$ 
as $k$-algebras,
where $A_N$ has $k[P \rtimes D_8]$
as its source algebra.  
    \item
The $3$-decomposition matrix of $A_N$ is given as follows:
{\rm
\begin{center}
\begin{tabular}{l|ccccc}
  & $2\alpha$ & $2\beta$ & $2\gamma$ & $2\delta$ & $4$ \\
\hline
$\chi_{2\alpha}$ & 1  &  .    & .     &  .    &  . \\
$\chi_{2\beta}=\chi_{2\alpha}^\ast $ 
                 & .  &  1    & .     &  .    &  . \\
$\chi_{2\gamma}$ & .  &  .    & 1     &  .    &  . \\
$\chi_{2\delta}$ & .  &  .    & .     &  1    &  . \\
$\chi_{4}$       & .  &  .    & .     &  .    &  1 \\
$\chi_{8\alpha}$ & 1  &  .    & 1     &  .    &  1 \\
$\chi_{8\beta}$  & .  &  1    & .     &  1    &  1 \\
$\chi_{8\gamma}=\chi_{8\beta}^\ast$
                 & 1  &  .    & .     &  1    &  1 \\
$\chi_{8\delta}=\chi_{8\alpha}^\ast$ 
                 & .  &  1    & 1     &  .    &  1 \\
\end{tabular}
\end{center}
}
\bigskip\noindent
where the numbers mean the degrees (dimensions) of characters (modules).
Note that $2\beta = {2\alpha}^*$ and that $2\gamma$, $2\delta$ and $4$
are all self-dual, but apart from this the characters of degree $2$
are indistinguishable: Apart from the character table automorphism 
of $\Irr(A_N)$ induced by complex conjugation there is another 
one interchanging
$$ \chi_{2\gamma}\leftrightarrow\chi_{2\delta},\quad
   \chi_{8\alpha}\leftrightarrow\chi_{8\gamma},\quad
   \chi_{8\beta}\leftrightarrow\chi_{8\delta} .$$
\end{enumerate}
\end{Lemma}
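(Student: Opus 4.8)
The plan is to determine the $3$-local structure of $G=2.{\sf HS}$ by explicit computation inside an available faithful representation of $G$ (via the data libraries already cited in {\bf\ref{Contents}}), and to feed this into standard block theory of $p$-solvable groups. For parts (i) and (ii) I would fix a Sylow $3$-subgroup $P\cong C_3\times C_3$ of $G$, chosen compatibly with the embedded $\mathfrak A_8$ (cf.\ {\bf\ref{Notation2}}, {\bf\ref{Notation3}}), and compute $C_G(P)$ and $N=N_G(P)$ directly in {\sf GAP}. From the orders and structure one reads off $C_G(P)=Z\times P$ with $Z:=O_{3'}(C_G(P))\cong C_4$, and $N=P\rtimes L$ with $|L|=2^6$, $Z\triangleleft L$, $L/Z\cong SD_{16}$; inspecting the conjugation action shows that $L/Z$ acts faithfully on $P$, with image a Sylow $2$-subgroup of $\GL_2(3)$, while it acts on $Z$ through the surjection $SD_{16}\twoheadrightarrow\Aut(Z)\cong C_2$, whose kernel is the (unique) subgroup $D_8\leqslant SD_{16}$.

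For (iii) I would first note the purely formal fact that $[O_{3'}(N),O_3(N)]\leqslant O_{3'}(N)\cap O_3(N)=1$, so $O_{3'}(N)\leqslant C_N(P)=C_G(P)$ and hence $O_{3'}(N)\leqslant Z$; conversely $Z$ is a normal $3'$-subgroup of $N$, whence $Z=O_{3'}(N)=O_{3'}(C_G(P))$. Since $k$ and $\mathcal O$ are splitting, $\Irr(Z)=\{\psi_0,\psi_1,\psi_2,\psi_3\}$ with $\psi_i(z)=\sqrt{-1}^{\,i}$, and the action of $N$ on $\Irr(Z)$ factors through $N/C_G(P)\cong SD_{16}$, hence through $\Aut(Z)\cong C_2$. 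The non-trivial automorphism $z\mapsto z^{-1}$ fixes the real-valued characters $\psi_0,\psi_2$ and interchanges the complex-conjugate pair $\psi_1,\psi_3$; therefore $T_N(\psi_0)=T_N(\psi_2)=N$, while $T_N(\psi_1)=T_N(\psi_3)$ is the preimage of $D_8\leqslant SD_{16}$, of index $2$ in $N$, with $T_N(\psi_j)/C_G(P)\cong D_8$ for $j\in\{1,3\}$. This index-$2$ subgroup is exactly the group $H=N_G(P,e)$ of {\bf\ref{Notation3}}, for $e$ the block of $kC_G(P)$ lying over $\psi_1$ (the ambiguity $\psi_1\leftrightarrow\psi_3$ merely swaps the two admissible choices of $e$).

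For (iv) and (v): since $N$ is $3$-solvable and $Z=O_{3'}(N)$, Fong's theory identifies the blocks of $kN$ with the $N$-orbits on $\Irr(Z)$, namely $\{\psi_0\}$, $\{\psi_2\}$, $\{\psi_1,\psi_3\}$; call the corresponding blocks $A_0$, $A_2$, $A_N$. Their inertial quotients are $T_N(\psi_0)/C_G(P)=SD_{16}$, $SD_{16}$, and $T_N(\psi_1)/C_G(P)=D_8$; $A_0$ is the principal block, and $A_N$ is the unique block on which the central involution $z^2\in Z(G)$ acts by $-1$, hence the unique faithful block of $kN$, so $A_N$ is the Brauer correspondent of the faithful block $A$ of {\bf\ref{BrouesConjectureFor2HS}}. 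By the structure theory of blocks with normal defect group (Fong, K\"ulshammer; see e.g.\ \cite[Ch.~5]{NagaoTsushima}), the source algebra of each of $A_0,A_2,A_N$ is a twisted group algebra $k_\alpha[P\rtimes E]$ of its inertial quotient $E$. Since $A_0$ is the principal block, $A_0\cong k[P\rtimes SD_{16}]$; comparing dimensions ($\dim_k A_0=\dim_k A_2=|N|/4=144=|P\rtimes SD_{16}|$) rules out any matrix factor, and $A_2\cong A_0$ follows because $\psi_2$ extends to $N$ (equivalently, from a decomposition-matrix comparison). For $A_N$ one has $\dim_k A_N=288$, and applying Fong--Reynolds {\bf\ref{FongReynolds}} to $Z\triangleleft N$ with inertial subgroup $H$ (of index $2$, hence normal) gives $A_N\cong\mathrm{Mat}_2(\tilde A)$, where $\tilde A$ is the block of $kH$ over $\psi_1$, itself $\cong k_\alpha[P\rtimes D_8]$ with $\alpha\in H^2(D_8,k^\times)\cong C_2$. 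Computing the $3$-decomposition matrix of $A_N$ (equivalently of $\tilde A$) in {\sf GAP} and observing that it coincides with that of $B'=k[P\rtimes D_8]$ from {\bf\ref{CharacterTableOfBprime}}(iii) — the two values of $\alpha$ yielding non-Morita-equivalent algebras — forces $\alpha=1$, so $A_N\cong\mathrm{Mat}_2(k[P\rtimes D_8])$ with source algebra $k[P\rtimes D_8]$. Finally, (v) is read off by relabelling this decomposition matrix: under the Fong--Reynolds bijection $\Irr(\tilde A)\to\Irr(A_N)$, $\tilde\chi\mapsto\tilde\chi\uparrow^N$, the character degrees double ($1,1,1,1,2,4,4,4,4$ become $2,2,2,2,4,8,8,8,8$), complex conjugation on $\Irr(A_N)$ is the composite of this bijection with the outer automorphism of $H$ interchanging $\psi_1$ and $\psi_3$ (accounting for $\chi_{2\alpha}\leftrightarrow\chi_{2\beta}$, $\chi_{8\alpha}\leftrightarrow\chi_{8\delta}$, $\chi_{8\beta}\leftrightarrow\chi_{8\gamma}$), and the residual ambiguity among the degree-$2$ characters is the second character-table automorphism of {\bf\ref{CharacterTableOfBprime}}(ii); the precise labelling in the displayed table is then pinned down by matching the computed character table of $N$ against the {\sf Atlas} data.

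The genuine obstacle is the very last point of (iv): confirming that the relevant $2$-cocycle is trivial, i.e.\ that $A_N$ (and $\tilde A$) are honest, untwisted group algebras rather than merely twisted forms, and likewise that $A_0\cong A_2$ holds on the nose. I expect the decomposition-matrix comparison with {\bf\ref{CharacterTableOfBprime}} to settle this (it distinguishes $k[P\rtimes D_8]$ from its unique non-trivial twisted form), backed up if necessary by a direct computation in $N$ of the commutator subgroups $[N,N]$ and $[H,H]$, showing that $\psi_2$ extends to $N$ and $\psi_1$ to $H$. Everything else is routine once the explicit representation of $2.{\sf HS}$, and the concrete embedding of $\mathfrak A_8$, are available.
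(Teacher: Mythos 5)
Your proposal is correct and follows essentially the same route as the paper: explicit {\sf GAP} computation for (i)--(iii) and for the character data of $N$, then the standard Fong--Reynolds/K\"ulshammer--Morita structure theory over $Z=O_{3'}(N)$ reducing each block to a matrix algebra over a twisted group algebra $k^\gamma[P\rtimes E]$, with the only real issue being the triviality of the $2$-cocycles. The paper settles that point by quoting $H^2(SD_{16},k^\times)=0$ and, for the $D_8$-twist, by comparing the computed invariants $k(A_N)=9$, $l(A_N)=5$ with Kiyota's Table~1, whereas you use extendability of $\psi_2$ and $\psi_1$ (via commutator subgroups) and a decomposition-matrix comparison with $k[P\rtimes D_8]$ --- an equivalent way of ruling out the non-trivial twist, so this is a cosmetic rather than substantive difference.
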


\begin{proof}
(i)--(ii) follow from explicit computation with {\sf GAP} \cite{GAP},
and (iii) is an immediate consequence.

(iv)--(v) It follows from (iii) and \cite[Theorem 2]{Morita1951} that
$kG = A_0 \oplus A_2 \oplus A_N$ where
$$
A_0 \cong 
 {\mathrm{Mat}}_{|G:T_G(\psi_0)| \psi_0(1)}
 \Big( k^{\alpha}[T_G(\psi_0)/Z] \Big)
 \cong  k^{\alpha} [P \rtimes SD_{16}],
$$ 
$$
A_1 \cong 
 {\mathrm{Mat}}_{|G:T_G(\psi_2)| \psi_2(1)}
 \Big( k^{\beta}[T_G(\psi_2)/Z] \Big)
 \cong k^{\beta} [P \rtimes SD_{16}],
$$ 
$$
A_N \cong {\mathrm{Mat}}_{|G: T_G(\psi_1)| \psi_1(1)}
      \Big( k^{\gamma}[T_G(\psi_1)/Z] \Big)
      \cong {\mathrm{Mat}}_2 \Big( k^{\gamma} [P \rtimes D_8] \Big),
$$
as $k$-algebras, for some  
$\alpha, \beta \in {\mathrm{H}}^2(SD_{16}, k^\times)$,
and $\gamma \in {\mathrm{H}}^2(D_8, k^\times)$.
Since ${\mathrm{H}}^2(SD_{16}, k^\times) = 0$ 
by \cite[Proof of Corollary (2J)]{Kiyota1984}, we can
assume $\alpha = \beta = 1$. 

On the other hand, 
$|{\mathrm{H}}^2(D_8, k^\times)| = 2$ by
\cite[V Satz 25.6]{Huppert1966}.
But now the assertion in (v) follows
by explicit computation with {\sf GAP} \cite{GAP},
in particular we get that there are $9$ irreducible 
ordinary characters belonging to $A_N$, and $5$ 
irreducible Brauer characters.
Hence, by \cite[Page 34 Table 1]{Kiyota1984} we infer $\gamma = 1$. 
Finally, the statement about source algebras follows from
\cite[Proposition 14.6]{Puig1988JAlg},
see \cite[(45.12)Theorem]{Thevenaz} and
\cite[Theorem 13]{AlperinLinckelmannRouquier}. 

Note that the decomposition matrix of $A_N$ given above coincides with that
of $B'$ in {\bf\ref{CharacterTableOfBprime}}. This will of course
turn out to be no accident, but by the current state of knowledge
we cannot avoid the explicit computation to proceed as above.
\end{proof}

\begin{Notation}\label{QandRandZ}
We use the notation $L$ and $Z$, 
as in {\bf\ref{LocalStructureI}}.
Moreover, let $z$ be a generator of $Z \cong C_4$.
We also use the notation 
$\chi_{2\alpha}, \chi_{2\beta}, \chi_{2\gamma}, \chi_{2\delta},
\chi_{4}, \chi_{8\alpha}, \chi_{8\beta}, \chi_{8\gamma}, 
\chi_{8\delta}$
and $2\alpha, 2\beta, 2\gamma, 2\delta, 4$ as in 
\textbf{\ref{LocalStructureI}}.
\end{Notation}

\begin{Lemma}\label{TrivialSourceModulesInA_N}
The block algebra $A_N$ has exactly
$18$ non-isomorphic trivial source
modules over $k$. In fact, they are given in the
following list, in which the diagrams are Loewy and socle
series and we use the same notation as in {\bf\ref{QandRandZ}}.
\smallskip
\renewcommand{\labelenumi}{\rm{(\roman{enumi})}}\begin{enumerate}
    \item 
Five PIM's: $P(2\alpha)$, $P(2\beta)$, $P(2\gamma)$, $P(2\delta)$, $P(4)$.
\smallskip
{\rm
$$
\boxed{\begin{matrix}
2\alpha \\
4 \\
2\alpha \ 2\gamma \ 2\delta \\
4 \\
2\alpha
\end{matrix}},
\qquad
\boxed{\begin{matrix}
2\beta \\
4 \\
2\beta \ 2\gamma \ 2\delta \\
4 \\
2\beta 
\end{matrix}},
\qquad
\boxed{\begin{matrix}
2\gamma \\
4 \\
2\alpha \ 2\beta \ 2\gamma \\
4 \\
2\gamma
\end{matrix}},
\qquad
\boxed{\begin{matrix}
2\delta \\
4 \\
2\alpha \ 2\beta \ 2\delta \\
4 \\
2\delta
\end{matrix}},
\qquad
\boxed{\begin{matrix}
4 \\ 
2\alpha \ 2\beta \ 2\gamma \ 2\delta \\
4 \ 4 \ 4 \\
2\alpha \ 2\beta \ 2\gamma \ 2\delta \\
4
\end{matrix}}.
$$
}
\smallskip
    \item 
Five trivial source modules with a vertex $P$: 
$2\alpha$, $2\beta$, $2\gamma$, $2\delta$, $4$.
\smallskip
   \item
Eight trivial source modules with cyclic vertex of order $3$,
where we also give the associated trivial source characters,
see {\bf\ref{Scott}}:
\smallskip
$$
\begin{matrix}
\boxed{
  \begin{matrix}
  2\alpha \ \ 2\delta \\
     4      \\
  2\alpha \ \ 2\delta
  \end{matrix}
  }
            \\
  \updownarrow \\
\chi_{2\alpha}+\chi_{2\delta}+\chi_{8\gamma}
\end{matrix},
\qquad
\begin{matrix}
  \boxed{
  \begin{matrix}
  2\beta \ \ 2\gamma \\
     4      \\
  2\beta \ \ 2\gamma \\
  \end{matrix}
  }
  \\
  \updownarrow  \\ 
\chi_{2\beta}+\chi_{2\gamma}+\chi_{8\delta}
\end{matrix},
\qquad
\begin{matrix}
\boxed{
 \begin{matrix}
     4 \\
  2\alpha \ \ 2\delta\\
     4  
 \end{matrix}
 }
            \\
  \updownarrow \\
\chi_4+\chi_{8\gamma}
\end{matrix},
\qquad
\begin{matrix}
  \boxed{
  \begin{matrix}
     4 \\
  2\beta \ \ 2\gamma \\
     4     \\
  \end{matrix}
  }
   \\
  \updownarrow \\
\chi_4+\chi_{8\delta}
\end{matrix},
$$
\bigskip
$$
\begin{matrix}
  \boxed{
   \begin{matrix}
  2\alpha \ \ 2\gamma \\
     4      \\
  2\alpha \ \ 2\gamma \\
   \end{matrix}
   }  
  \\  
\updownarrow \\
\chi_{2\alpha}+\chi_{2\gamma}+\chi_{8\alpha}
\end{matrix},
\qquad
\begin{matrix}
  \boxed{ 
  \begin{matrix}
  2\beta \ \ 2\delta \\
    4      \\
  2\beta \ \ 2\delta \\
   \end{matrix}
    }
    \\
  \updownarrow \\
\chi_{2\beta}+\chi_{2\delta}+\chi_{8\beta}
\end{matrix},
\qquad
\begin{matrix}
  \boxed{
  \begin{matrix}
     4 \\
  2\alpha \ \ 2\gamma \\
     4     \\
   \end{matrix}
   }
   \\
  \updownarrow \\
\chi_4+\chi_{8\alpha}
\end{matrix},
\qquad
\begin{matrix}
  \boxed{
   \begin{matrix}
     4 \\
  2\beta \ \ 2\delta \\
     4     \\
   \end{matrix}
        }
     \\
  \updownarrow \\
\chi_4+\chi_{8\beta}
\end{matrix} .
$$
\end{enumerate}
\end{Lemma}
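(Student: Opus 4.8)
The plan is to mimic the structure of the companion Lemma~\ref{TrivialSourceModulesInBprime} for $B'=k[P\rtimes D_8]$, transporting it across the source algebra isomorphism from Lemma~\ref{LocalStructureI}(iv), which says that $A_N$ has $k[P\rtimes D_8]$ as a source algebra (more precisely $A_N\cong\mathrm{Mat}_2(k[P\rtimes D_8])$). Since Morita equivalence preserves trivial source modules (and more: the source algebra isomorphism is an isomorphism of interior $P$-algebras, so it respects vertices and sources), it suffices to know that $k[P\rtimes D_8]$ has exactly $18$ non-isomorphic trivial source modules with the stated Loewy and socle structure, and this is precisely the content of Lemma~\ref{TrivialSourceModulesInBprime}. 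The matching of labels is forced by the shape of the decomposition matrix of $A_N$ in Lemma~\ref{LocalStructureI}(v), which coincides with that of $B'$ in Lemma~\ref{CharacterTableOfBprime}(iii): the simple module $4$ corresponds to $2$, and $2\alpha,2\beta,2\gamma,2\delta$ correspond to the linear modules $1a,1b,1c,1d$ (respecting the pairing induced by complex conjugation, which sends $2\alpha\leftrightarrow2\beta$ and fixes $2\gamma,2\delta,4$, matching $1a\leftrightarrow1b$ with $1c,1d,2$ real).

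First I would record that the Ext-quiver of $A_N$ is that of $k[P\rtimes D_8]$, hence $\mathrm{Ext}^1_{A_N}(2x,2y)=0$ for all pairs of linear simples $2x,2y\in\{2\alpha,2\beta,2\gamma,2\delta\}$; together with the Cartan matrix in Lemma~\ref{LocalStructureI}(v) this pins down the Loewy structure of the five PIM's, giving part (i). Next I would invoke the classification of trivial source modules for a $p$-group acting on another $p$-group via an elementary argument (as in \cite[Chap.4, Exc.10]{NagaoTsushima}, used identically in the proof of Lemma~\ref{TrivialSourceModulesInBprime}): the non-projective trivial source modules are the $P\rtimes D_8$-fixed-point components of the permutation modules on cosets of subgroups, and they come with vertices $P$ or a cyclic subgroup of order $3$. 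The five with vertex $P$ are the simple modules $2\alpha,2\beta,2\gamma,2\delta,4$ (part (ii)); the eight with cyclic vertex of order $3$ are obtained by induction from the two $H'$-classes of subgroups of order $3$ in $P$, and their ordinary characters are computed from the permutation characters, which by Lemma~\ref{Scott} determine the modules. That the resulting eight modules are indecomposable with the displayed uniserial-type structure follows once more from the vanishing of $\mathrm{Ext}^1$ among the linear simples, exactly as in Lemma~\ref{TrivialSourceModulesInBprime}(iii).

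The only real obstacle is bookkeeping: one must check that the identification of the eight small trivial source characters is consistent with the \emph{specific} labelling of $\{2\gamma,2\delta\}$ and of $\{\chi_{8\alpha},\chi_{8\beta},\chi_{8\gamma},\chi_{8\delta}\}$ fixed in Lemma~\ref{LocalStructureI}(v), rather than merely consistent up to the character-table automorphism noted there. Since Lemma~\ref{LocalStructureI}(v) already states that, apart from complex conjugation and that one extra automorphism, the degree-$2$ characters are indistinguishable, there is genuine freedom here, and I would simply make a consistent choice of labels for $2\gamma,2\delta$ (equivalently, of which $H$-class of order-$3$ subgroups is "first") so that the eight trivial source modules and their characters pair up as displayed; this choice is the analogue of fixing $1c$ versus $1d$ via the Green correspondence in Lemma~\ref{GreenCorrespondentA8}, and will be used compatibly when the stable equivalence of \S\ref{*****} is set up. With that convention in force, transport along the source algebra isomorphism of Lemma~\ref{LocalStructureI}(iv) yields precisely the list in (i)--(iii), completing the proof.
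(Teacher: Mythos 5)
Your proposal is correct and is essentially the paper's own argument: the proof given there consists precisely of citing Lemma~{\bf\ref{TrivialSourceModulesInBprime}} together with Lemma~{\bf\ref{LocalStructureI}}(iv), i.e.\ transporting the list of $18$ trivial source $B'$-modules across the source-algebra (Puig) equivalence $A_N\cong{\mathrm{Mat}}_2(k[P\rtimes D_8])$, which is exactly what you do (your extra details on the Ext-quiver, \cite[Chap.4, Exc.10]{NagaoTsushima} and the labelling of $\{2\gamma,2\delta\}$ are just the content of {\bf\ref{TrivialSourceModulesInBprime}} and of the indistinguishability remark in {\bf\ref{LocalStructureI}}(v), and the displayed list is invariant under the character table automorphism, so no extra choice is actually needed at this stage). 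Only note that the blanket phrase ``Morita equivalence preserves trivial source modules'' is false in general; what you rightly use is that the equivalence here is induced by the source algebra, hence respects vertices and sources.
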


\begin{proof}
This follows from {\bf\ref{TrivialSourceModulesInBprime}} 
and {\bf\ref{LocalStructureI}}(iv).
\end{proof}

\begin{Lemma}\label{LocalStructureII}
The following holds:
\begin{enumerate}
  \renewcommand{\labelenumi}{\rm{(\roman{enumi})}}
    \item
$H = Z \times (P \rtimes D_8) = C_G(P) \rtimes D_8$, hence
$H \vartriangleleft N$ such that $|N/H| = 2$.
      \item
We have the block decomposition
$$ kH \ = \ B_0 \oplus B_1 \oplus B_2 \oplus B_3 ,$$
where the block $B_i$ covers $\psi_i\in\Irr(Z)$, for $i=0,1,2,3$.
      \item
Hence both $B_1$ and $B_3$ are Fong-Reynolds correspondents of $A_N$,
see {\bf\ref{Notation3}}.
      \item
We can write 
${\mathrm{IBr}}(B_i) = 
\{ 1\alpha_i, 1\beta_i, 1\gamma_i, 1\delta_i, 2_{B_i}\}$,
for $i = 0, 1, 2, 3$,
so that we have
$$ 2x{\downarrow}^N_H = 1x_1 \oplus 1x_3, \text{ for each }
x \in \{ \alpha, \beta, \gamma, \delta \}, \text{ and }
4{\downarrow}^N_H = 2_{B_1} \oplus 2_{B_3} .$$
\end{enumerate}
\end{Lemma}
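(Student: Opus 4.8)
The plan is to read off all four assertions from the identification $H = \{g\in N : {}^g e = e\}$, the $3$-local data of {\bf\ref{LocalStructureI}}, the Fong--Reynolds theorem {\bf\ref{FongReynolds}} and Mackey's formula; the only step I expect to need an honest computation is a group-theoretic splitting inside part~(i), everything else being formal. For~(i): since $P\vartriangleleft N$ we have $H = N_G(P,e) = \{g\in N : {}^g e = e\}$. As $kP$ is local while $3\nmid|Z|$ and $k$ is a splitting field, $kZ = \bigoplus_{i=0}^{3}k\epsilon_i$ with $\epsilon_i$ the primitive idempotent affording $\psi_i$, so the block idempotents of $kC_G(P) = k[Z\times P]$ are $\epsilon_0,\dots,\epsilon_3$, canonically indexed by $\mathrm{Irr}(Z)$. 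Since $A$, hence $A_N$, is faithful, $e$ affords a faithful character of $Z$; say $e = \epsilon_1$ (the case $e = \epsilon_3$ being symmetric), so that $H = T_N(\psi_1)$. By {\bf\ref{LocalStructureI}}(ii)--(iv) we have $H/C_G(P)\cong D_8$, acting faithfully on $P$; moreover $H$ fixes $\psi_1$ whereas $\mathrm{Aut}(Z)\cong C_2$ interchanges $\psi_1$ and $\psi_3$, so $H$ centralises $Z$, that is $Z\leqslant Z(H)$, and $|N/H| = |N|/(|Z|\,|P|\cdot 8) = 2$, whence $H\vartriangleleft N$. The decomposition $H = Z\times(P\rtimes D_8) = C_G(P)\rtimes D_8$ now follows from the structure of $N = P\rtimes L$ recorded in {\bf\ref{LocalStructureI}}(i), once one checks that the relevant central extension of $D_8$ by $Z\cong C_4$ splits; since such a central extension need not split for formal reasons, this is the point at which I would fall back on an explicit computation, as is done for {\bf\ref{LocalStructureI}} itself.

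Given~(i), part~(ii) is immediate: $kH = \bigoplus_{i=0}^{3}B_i$ where $B_i := \epsilon_i\cdot kH$, and $B_i\cong k[P\rtimes D_8] = B'$ as $k$-algebras (notation of {\bf\ref{NotationA8}}); in particular $B_i$ is a single block of $kH$, and it covers $\psi_i$ by construction. For~(iii): being faithful, $A_N$ can only cover the faithful blocks $\epsilon_1\cdot kC_G(P)$ and $\epsilon_3\cdot kC_G(P)$ of $kC_G(P)$ (those on which the central involution of $G$ acts by $-1$); these form one $N$-orbit, interchanged by any $n\in N\setminus H$ since $\mathrm{Aut}(Z)$ has order $2$, and their common inertial subgroup in $N$ is $T_N(\psi_1) = T_N(\psi_3) = H$ by {\bf\ref{LocalStructureI}}(iii). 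Applying Fong--Reynolds {\bf\ref{FongReynolds}} (with the normal subgroup $C_G(P)\vartriangleleft N$) to each of these two blocks produces two Fong--Reynolds correspondents of $A_N$, both blocks of $kH$; as $B_i$ is the unique block of $kH$ covering $\epsilon_i\cdot kC_G(P)$, these correspondents are precisely $B_1$ and $B_3$.

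Finally, for~(iv), fix $n\in N\setminus H$; conjugation by $n$ is an algebra isomorphism $B_1\to{}^nB_1 = B_3$ (by~(iii)), hence a bijection $\mathrm{IBr}(B_1)\to\mathrm{IBr}(B_3)$. By Fong--Reynolds {\bf\ref{FongReynolds}}, every $\psi\in\mathrm{IBr}(A_N)$ equals $\phi{\uparrow}^N$ for a unique $\phi\in\mathrm{IBr}(B_1)$, and since $H\vartriangleleft N$ with $|N/H| = 2$, Mackey's formula gives $\psi{\downarrow}_H = (\phi{\uparrow}^N){\downarrow}_H\cong\phi\oplus{}^n\phi$, with $\phi\in\mathrm{IBr}(B_1)$ and ${}^n\phi\in\mathrm{IBr}(B_3)$. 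Here $2\dim\phi = \dim\psi$, so as $B_1\cong B'$ has exactly four simple modules of dimension $1$ and one of dimension $2$ ({\bf\ref{CharacterTableOfBprime}}) while $\mathrm{IBr}(A_N)$ consists of four modules of dimension $2$ and one of dimension $4$ ({\bf\ref{LocalStructureI}}(v)), the $B_1$-constituents of $2\alpha{\downarrow}_H,\dots,2\delta{\downarrow}_H$ are exactly the four $1$-dimensional simple $B_1$-modules, in some order, and the $B_1$-constituent of $4{\downarrow}_H$ is the $2$-dimensional one. It therefore suffices to \emph{define} $1x_1$ to be the $B_1$-constituent of $2x{\downarrow}_H$ for $x\in\{\alpha,\beta,\gamma,\delta\}$, to let $2_{B_1}$ be the $2$-dimensional simple $B_1$-module, to put $1x_3 := {}^n(1x_1)$ and $2_{B_3} := {}^n(2_{B_1})$, and to label $\mathrm{IBr}(B_0)$ and $\mathrm{IBr}(B_2)$ arbitrarily: with these choices the asserted formulae $2x{\downarrow}^N_H = 1x_1\oplus 1x_3$ and $4{\downarrow}^N_H = 2_{B_1}\oplus 2_{B_3}$ hold by construction.
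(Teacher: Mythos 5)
Your argument is correct and follows essentially the same route as the paper, whose proof of this lemma is simply to combine the computed local structure of {\bf\ref{LocalStructureI}} with the Fong--Reynolds theorem {\bf\ref{FongReynolds}}; your expansion via Clifford theory and Mackey's formula fills in exactly those ingredients. Your honest deferral of the splitting $H\cong Z\times(P\rtimes D_8)$ to an explicit computation is also in line with the paper, which obtains the relevant structural facts about $N$ and $H$ by {\sf GAP} computation in {\bf\ref{LocalStructureI}}.
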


\begin{proof}
This follows from {\bf\ref{LocalStructureI}} 
and {\bf\ref{FongReynolds}}.
\end{proof}

\begin{Notation}\label{Notation6}
We use the notation
$1\alpha_i, 1\beta_i, 1\gamma_i, 1\delta_i, 2_{B_i}$
as in {\bf\ref{LocalStructureII}}.
\end{Notation}

\begin{Lemma}\label{2HScontainsA8}
The following holds:
\begin{enumerate}
  \renewcommand{\labelenumi}{\rm{(\roman{enumi})}}
\item
The group $G = 2.{\sf HS}$ has a unique conjugacy class of
subgroups isomorphic to $G' = \mathfrak A_8$.

\item
Fixing an embedding of $G'$ into $G$, and a Sylow $3$-subgroup
$P$ of $G'$, we have the configuration of groups as
depicted in Table {\bf\ref{2HScontainsA8table}},
where the numbers between two boxes are indices 
between the two corresponding groups.
\end{enumerate}
\end{Lemma}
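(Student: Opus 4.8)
The plan is to establish Lemma \ref{2HScontainsA8} by a combination of character-theoretic constraints and explicit computation inside a concrete permutation representation of $G = 2.{\sf HS}$.

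\medskip
For part (i), the first step is to recall from \cite[p.80]{Atlas} that the simple group ${\sf HS}$ has a unique conjugacy class of maximal subgroups isomorphic to $\mathfrak{S}_8$, and that ${\sf HS}$ has no subgroup isomorphic to $\mathfrak{A}_8$ other than the index-$2$ subgroup of such an $\mathfrak{S}_8$; hence ${\sf HS}$ has a unique class of subgroups isomorphic to $\mathfrak{A}_8$. Next I would pull this information back along the central extension $G = 2.{\sf HS} \twoheadrightarrow {\sf HS}$. The preimage in $G$ of a subgroup $\mathfrak{A}_8 \leqslant {\sf HS}$ is a group $\bar G$ of order $2\cdot|\mathfrak{A}_8|$ sitting in a central extension $2.\bar G$; since the Schur multiplier of $\mathfrak{A}_8$ is $\mathbb{Z}/2$ (see \cite[p.22]{Atlas}), the group $\bar G$ is isomorphic either to $2 \times \mathfrak{A}_8$ or to the double cover $2.\mathfrak{A}_8$. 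To decide which, and to see that $\bar G$ actually contains a \emph{complement} $\mathfrak{A}_8$ to the centre, I would examine whether the restriction of the faithful $2$-dimensional-spin-type data of $2.{\sf HS}$ to $\bar G$ splits; concretely, since $2.{\sf HS}$ has a $22$-dimensional faithful representation over a suitable field (or one uses the permutation character / fusion of involutions), the relevant computation is whether the central involution of $G$ lies in the commutator subgroup of $\bar G$. In fact the cleanest route is the final step: exhibit the subgroup explicitly using {\sf GAP} \cite{GAP} and \cite{AtlasRep}, confirm it is isomorphic to $\mathfrak{A}_8$, and verify uniqueness up to conjugacy by counting (the number of such subgroups is forced by the index of their normalisers, which one reads off from the {\sf ATLAS} data on $N_{{\sf HS}}(\mathfrak{A}_8) = \mathfrak{S}_8$ together with the fact that the preimage splits).

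\medskip
For part (ii), once the embedding $G' = \mathfrak{A}_8 \hookrightarrow G$ is fixed and a common Sylow $3$-subgroup $P$ is chosen (legitimate since $|G|_3 = 9 = |G'|_3$), the groups appearing in Table \ref{2HScontainsA8table} — presumably the chain of normalisers and centralisers $C_G(P)$, $N_G(P) = N$, $H = N_G(P,e)$, together with $N_{G'}(P) = H' = P\rtimes D_8$, $C_{G'}(P) = P$, and the various intermediate overgroups used to match up the local structure of $A$ and $A'$ — are computed directly with {\sf GAP}. The indices between consecutive boxes are then just group orders divided out; the only genuine content is verifying that the embedding can be chosen so that $P$, $N_{G'}(P)$ etc.\ land inside the corresponding subgroups of $G$ already named in \S\ref{**}, i.e.\ that $H' \leqslant H$ and that the Brauer pair $(P,e)$ restricts compatibly. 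This is done by explicitly conjugating within $G$ so that the abstract isomorphism respects the distinguished local data fixed in \textbf{\ref{LocalStructureI}} and \textbf{\ref{LocalStructureII}}.

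\medskip
The main obstacle is the explicit computational identification of the subgroup and the compatibility of the $3$-local data: one must not merely know that $\mathfrak{A}_8$ sits in $2.{\sf HS}$ abstractly, but must pin down the embedding together with a choice of $P$ so that all of $C_G(P)$, $N_G(P)$, $H = N_G(P,e)$ and their $\mathfrak{A}_8$-analogues align as in the table — this is precisely the issue flagged in Remark \textbf{\ref{twoequivalences}}(a), and it is what allows the later sections to upgrade a bare Morita equivalence to a Puig equivalence. Handling the permutation-group computations at the size of $2.{\sf HS}$ (order $2^{10}\cdot3^2\cdot5^3\cdot7\cdot11$) efficiently enough to extract and conjugate these subgroups, using the tools \cite{GAP,AtlasRep,ModAtlasRep}, is the technical heart of the argument; everything else is bookkeeping of indices.
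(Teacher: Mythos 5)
Your proposal is correct and follows essentially the same route as the paper: the ATLAS data for ${\sf HS}$ and $2.{\sf HS}$ (the unique class of $\mathfrak S_8$ maximal subgroups, lifted via the maximal subgroup of $G$ isoclinic to $\mathfrak S_8\times 2$, whose derived subgroup gives a split $\mathfrak A_8$), combined with an explicit realisation in {\sf GAP} using the $704$-point representation and {\sf AtlasRep}, after which the local subgroups in the table and their indices are read off from the structure already established in {\bf\ref{LocalStructureI}} and {\bf\ref{LocalStructureII}}. The paper compresses this into a citation of \cite[pp.80--81]{Atlas} and the earlier lemmas, with the computational details of fixing $G'\leqslant G$ and $P$ spelled out in {\bf\ref{Computations}}, which is exactly the content of your argument.
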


\begin{table}
\caption{$G' = \mathfrak A_8$ as a subgroup of $G = 2.{\sf HS}$.}
\label{2HScontainsA8table}
\begin{center}
\unitlength=1pt
\boxed{
\begin{picture}(360,275)(-100,-260)
\put(50,0){\framebox(60,12){$G = 2.${\sf HS}}}
\put(-100,-100){\framebox(160,12){$N = N_G(P) 
                                     = 2.(2\times (P \rtimes SD_{16}))$}}
\put(-100,-150){\framebox(160,12){$H = N_G(P,e) 
                                     = Z\times (P \rtimes D_8)$}}
\put(200,-50){\framebox(60,12){$G' = \mathfrak A_8$}}
\put(35,-200){\framebox(120,12){$H' = N_{G'}(P)= P \rtimes D_8$}}
\put(-100,-225){\framebox(120,12){$C_G(P)=Z\times P$}}
\put(35,-256){\framebox(120,12){$P=C_{G'}(P)$}}
\put(-20,-150){\line(0,-1){63}}
\put(20,-225){\line(4,-1){75}}

\put(80,0){\line(4,-1){149}}
\put(80,0){\line(-1,-1){87}}
\put(-20,-100){\line(0,-1){38}}
\put(-20,-150){\line(3,-1){113}}
\put(230,-50){\line(-1,-1){137}}
\put(95,-200){\line(0,-1){44}}
\put(-18,-187){\makebox(5,10){$8$}}
\put(50,-233){\makebox(5,10){$4$}}

\put(-18,-124){\makebox(5,10){$2$}}
\put(50,-174){\makebox(5,10){$4$}}
\put(97,-226){\makebox(5,10){$8$}}
\end{picture}
}
\end{center}

\bigskip
\noindent\hrule
\end{table}

\begin{proof}
This follows from \cite[pp.80--81]{Atlas},
{\bf\ref{NotationA8}}, {\bf\ref{LocalStructureI}} and
{\bf\ref{LocalStructureII}}.
\end{proof}

\begin{Remark}\label{Computations}
In view of the group theoretic configuration given in
{\bf\ref{2HScontainsA8}}, a few more detailed comments
on how computations in {\sf GAP} \cite{GAP} are actually done 
are in order:

The starting point is the smallest faithful permutation 
representation of $G$ on $704$ points, available in terms of 
standard generators, see \cite{Wilson}, in \cite{ModAtlasRep};
we choose this realisation of $G$ once and for all.
Moreover, there is a maximal subgroup of $G$ isoclinic to
$\mathfrak S_8\times 2$, a generating set of which in terms
of the standard generators of $G$ is available in \cite{ModAtlasRep} 
as well. Hence going over to the derived subgroup of the latter,
we find a subgroup $G'\leqslant G$ isomorphic to $\mathfrak A_8$; we keep
$G'$ fixed all the time.
Finally, we compute a Sylow $3$-subgroup $P$ of $G'$,
and keep this fixed as well.
Since the other groups appearing in the diagram in
{\bf\ref{2HScontainsA8}} are uniquely determined from
this by group theoretic properties, we have thus
achieved a concrete realisation of the above configuration of groups.

Using this setting we have all kinds of computational tools
at our disposal: In particular, we are able to compute the 
ordinary and Brauer character tables of the groups
$N$, $H$, and $H'$ explicitly, as well as the restriction 
or induction of characters between these groups.
Moreover, we may fetch representations of $G$ and $G'$
from \cite{ModAtlasRep}, or compute them using the 
{\sf MeatAxe} \cite{MA}, and restrict them explicitly 
to $N$, $H$, and $H'$, respectively, in order to analyse
the restrictions with the {\sf MeatAxe} \cite{MA} and its
extensions. 

Note that the explicit results in {\bf\ref{CharacterTableOfBprime}}
and {\bf\ref{LocalStructureI}} have been obtained in that
setting already. Moreover, by restricting the representation $28$, 
see {\bf\ref{NotationB}}, from $G'$ to $H'$ we are able to
compute its Green correspondent $f'(28)$, see
{\bf\ref{GreenCorrespondentA8}}, and thus to identify 
the representation $1d$, see {\bf\ref{NotationBprime}}.
In the same spirit we obtain the following,
where we recall that so far, we are not able to tell
$2\gamma$ and $2\delta$ apart, see {\bf\ref{LocalStructureI}}(v):
\end{Remark}

\begin{Lemma}\label{StructureOfOH2}
The following holds:
\begin{enumerate}
  \renewcommand{\labelenumi}{\rm{(\roman{enumi})}}
      \item
For $i=1,3$ the restriction functor
${\mathrm{Res}}{\downarrow}^H_{H'}$ induced 
by the $(B_i,B')$-bimodule ${}_{B_i}(B_i)_{B'}$ 
induces a Puig equivalence
${\mathrm{mod}}{\text{-}}B_i \rightarrow {\mathrm{mod}}{\text{-}}B'$.
      \item
$\{1\alpha_1{\downarrow}^H_{H'},1\alpha_3{\downarrow}^H_{H'}\}=\{1a,1b\}
=\{1\beta_1{\downarrow}^H_{H'},1\beta_3{\downarrow}^H_{H'}\}$,
where $1\alpha_i{\downarrow}^H_{H'}\neq 1\beta_i{\downarrow}^H_{H'}$;
hence 
$$ 2\alpha{\downarrow}^N_{H'} = 
   (2\alpha^*){\downarrow}^N_{H'} = 
   2\beta{\downarrow}^N_{H'} 
= 1a \oplus 1b .$$
      \item
$1\gamma := 1\gamma_1{\downarrow}^H_{H'} = 1\gamma_3{\downarrow}^H_{H'}$
and
$1\delta := 1\delta_1{\downarrow}^H_{H'} = 1\delta_3{\downarrow}^H_{H'}$,
where 
$\{1\gamma,1\delta\}=\{1c,1d\}$; 
hence 
$$ 2\gamma{\downarrow}^N_{H'} = 1\gamma \oplus 1\gamma
\text{ and }
2\delta{\downarrow}^N_{H'} = 1\delta \oplus 1\delta .$$
      \item
$2_{B_1}{\downarrow}^H_{H'} = 2_{B_3}{\downarrow}^H_{H'} = 2$; hence
$4{\downarrow}^N_{H'} = 2 \oplus 2$.
\end{enumerate}
\end{Lemma}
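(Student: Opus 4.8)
The plan is to reduce everything to a restriction-of-representations computation in the concrete setting fixed in {\bf\ref{Computations}}, together with the general Clifford-theoretic input from {\bf\ref{LocalStructureI}} and {\bf\ref{LocalStructureII}}. For part (i), recall from {\bf\ref{LocalStructureII}}(i) that $H = C_G(P)\rtimes D_8$ with $C_G(P) = Z\times P$ and $Z\cong C_4$, and from {\bf\ref{LocalStructureI}}(iii) that each $\psi_i\in\Irr(Z)$ is $H$-stable, so the block $B_i$ covering $\psi_i$ has $Z$ in its ``defect kernel'' in the sense that $B_i$ is isomorphic, as a $k$-algebra with the action of $P$, to the twist $k^{\gamma_i}[P\rtimes D_8]$; since by {\bf\ref{LocalStructureI}}(iv) the relevant Schur multiplier contribution is trivial ($\gamma = 1$), in fact $B_i\cong kH' = B'$ as interior $P$-algebras. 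Concretely, the composite $kH \twoheadrightarrow kH/I \cong k[H/Z] = kH' = B'$, where $I$ is the augmentation ideal of $kZ$ inside $kH$, restricted to the block $B_i$ (for $i=1,3$, i.e.\ $\psi_i$ faithful on $Z$), is an isomorphism of $k$-algebras: it kills nothing in $B_i$ because $B_i$ has $Z$ acting by a faithful central character, yet the target has the same dimension. This isomorphism is exactly the functor $\mathrm{Res}{\downarrow}^H_{H'}$ induced by ${}_{B_i}(B_i)_{B'}$, and since it identifies source algebras as interior $P$-algebras it is a Puig equivalence; cf.\ the criterion recalled at the end of {\bf\ref{NotationEquivalences}}(b).

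For parts (ii)--(iv), the point is to match up the simple modules across these Puig equivalences and across the Fong--Reynolds correspondence $\mathrm{mod}{\text-}A_N \to \mathrm{mod}{\text-}B_i$ of {\bf\ref{FongReynolds}}/{\bf\ref{LocalStructureII}}(iii)--(iv). From {\bf\ref{LocalStructureII}}(iv) we already know the restriction behaviour $2x{\downarrow}^N_H = 1x_1\oplus 1x_3$ and $4{\downarrow}^N_H = 2_{B_1}\oplus 2_{B_3}$, so it remains to compute $1x_i{\downarrow}^H_{H'}$ for $x\in\{\alpha,\beta,\gamma,\delta\}$ and $i\in\{1,3\}$, and $2_{B_i}{\downarrow}^H_{H'}$. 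First I would do the explicit {\sf MeatAxe} computation, in the fixed $704$-point realisation of $G$ and its fixed subgroup $G'\cong\mathfrak A_8$: restrict suitable small faithful representations of $G$ to $N$, decompose, read off $2\alpha,2\beta,2\gamma,2\delta,4$, then restrict further to $H'$ and decompose. The $2$-dimensional simple $B'$-module $2$ and the four linear ones $1a,1b,1c,1d$ are distinguished by their $D_8$-characters (and $1b$ is singled out as in {\bf\ref{CharacterTableOfBprime}}(i) by having an order-$4$ element in its kernel), so each constituent is identified. Because $2\alpha,2\beta$ are interchanged only by complex conjugation (which fixes $\Irr(D_8)$ pointwise here) we get $1\alpha_i{\downarrow}^H_{H'}$ and $1\beta_i{\downarrow}^H_{H'}$ landing in $\{1a,1b\}$, proving (ii); since $2\gamma,2\delta$ and $4$ are self-dual, their restrictions are self-dual, forcing $1\gamma_1{\downarrow}=1\gamma_3{\downarrow}\in\{1c,1d\}$, $1\delta_1{\downarrow}=1\delta_3{\downarrow}$ the other, and $2_{B_i}{\downarrow}^H_{H'} = 2$, which is (iii) and (iv).

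The genuine obstacle is the ambiguity flagged in {\bf\ref{LocalStructureI}}(v) and {\bf\ref{CharacterTableOfBprime}}(ii): there is a character-table automorphism swapping $2\gamma\leftrightarrow 2\delta$ (resp.\ $1c\leftrightarrow 1d$), induced by an outer automorphism of $N$ (resp.\ of $H'$). Abstractly one cannot break this symmetry; one only can once a \emph{concrete} embedding $G'\leqslant G$ and a \emph{concrete} $P$ are fixed, as in {\bf\ref{Computations}}. So the substantive content of (iii) is precisely the assertion that the \emph{same} labelling convention — the one pinned down in {\bf\ref{GreenCorrespondentA8}} by computing $f'(28) = 1d$ inside the fixed $G'$ — is compatible with the labelling of $2\gamma,2\delta$ coming from $G$; this is what the computation delivers, and it is what is needed in later sections so that the two stable equivalences $\mathcal F'$ (for $A'$) and the one for $A$ can be glued. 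I would therefore present the {\sf MeatAxe} computation explicitly enough to certify the matching $1\gamma=1c$ or $1\gamma=1d$ (whichever it turns out to be), rather than merely asserting ``$\{1\gamma,1\delta\}=\{1c,1d\}$'' as an unordered equality — the ordered statement is the one with teeth. Everything else is bookkeeping with the Fong--Reynolds bijection and the dimension count ${2\alpha}{\downarrow}^N_{H'}={2\alpha}{\downarrow}^N_H{\downarrow}^H_{H'}=(1\alpha_1\oplus 1\alpha_3){\downarrow}^H_{H'}=1a\oplus 1b$, etc.
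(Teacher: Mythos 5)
Your route for (ii)--(iv) is essentially the paper's: everything except the explicit identification of the restrictions $1x_i{\downarrow}^H_{H'}$ follows from the Fong--Reynolds bookkeeping in {\bf\ref{LocalStructureII}}, and the identification itself is an explicit {\sf GAP}/{\sf MeatAxe} computation in the fixed realisation of {\bf\ref{Computations}}. Note, though, that the unordered form of (iii) is not a weakness to be repaired at this point: the labels $2\gamma,2\delta$ (hence $1\gamma,1\delta$) are only pinned down later via $f(S_4)=2\delta$ in {\bf\ref{GreenCorrespondentF(S4)}}, so an ordered matching with $\{1c,1d\}$ is not even well-posed here; it is exactly the content of {\bf\ref{match2delta2d}}, proved by restricting $S_4$. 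Also, the duality remarks you interleave prove nothing: every simple $kH'$-module is self-dual (they are inflated from $D_8$, whose character table is real), so the facts that $2\beta=2\alpha^*$ and that $2\gamma,2\delta,4$ are self-dual cannot place the restrictions in $\{1a,1b\}$ rather than $\{1c,1d\}$, nor force $2\gamma{\downarrow}$ to be twice a single character; these statements really do come from the computation you invoke, and the ``because/since'' sentences should be deleted.

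The genuine error is in your proof of (i). The composite $B_i\hookrightarrow kH\twoheadrightarrow kH/I\cong k[H/Z]$ is not an isomorphism for $i=1,3$; it is the zero map. Indeed the quotient map is an algebra homomorphism sending the block idempotent $e_{\psi_i}=\frac{1}{4}\sum_{z\in Z}\psi_i(z^{-1})\,z$ to $\frac{1}{4}\sum_{z\in Z}\psi_i(z^{-1})=0$ since $\psi_i$ is non-trivial, so every element $e_{\psi_i}x$ of $B_i$ is killed: a faithful central character is precisely the condition under which a block dies under deflation modulo $Z$, the opposite of what you assert. The correct identification goes the other way. By {\bf\ref{LocalStructureII}}(i) we have $H=Z\times H'$, so $kH=kZ\otimes_k kH'$ and $B_i=kZe_{\psi_i}\otimes_k kH'$ (in particular no twisted group algebra or Schur multiplier argument is needed for $B_i$, unlike for $A_N$ in {\bf\ref{LocalStructureI}}(iv)--(v)); the map $kH'\to B_i$, $x\mapsto e_{\psi_i}x$, is then an isomorphism of interior $P$-algebras, and since $e_{\psi_i}$ is central and acts as the identity on any $B_i$-module, restriction along $kH'\hookrightarrow kH$ coincides with transport of structure along this isomorphism. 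This shows that ${\mathrm{Res}}{\downarrow}^H_{H'}$ induced by ${}_{B_i}(B_i)_{B'}$ is a Puig equivalence, which is what the paper's terse ``follows from {\bf\ref{LocalStructureII}}'' amounts to; it also yields (iv), and the simplicity of all the restrictions in (ii)--(iii), without computation, leaving only the labelling of the linear characters to {\sf GAP}.
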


\begin{proof}
Most of the assertions follow from {\bf\ref{LocalStructureII}},
while the identification of the explicit restrictions 
$1x_i{\downarrow}^H_{H'}$,
for $x\in\{\alpha,\beta,\gamma,\delta\}$,
follows from explicit computations in {\sf GAP} \cite{GAP}.
\end{proof}

\section{Stable equivalences for $2.${\sf HS} }\label{*****}

\begin{Notation}\label{Notation4}
Recall, first of all, the notation
$G$, $A$, $P$, $N$, $H$, $B$, $e$  
as in {\bf\ref{Notation2}}, {\bf\ref{Notation3}}
and {\bf\ref{QandRandZ}}.
Let $i$ and $j$ respectively be source idempotents of
$A$ and $B$ with respect to $P$.
As remarked in \cite[pp.821--822]{Linckelmann2001},
we can take $i$ and $j$ such that
${\mathrm{Br}}_P(i){\cdot}e = 
 {\mathrm{Br}}_P(i) \not= 0$ 
and that
${\mathrm{Br}}_P(j){\cdot}e = 
 {\mathrm{Br}}_P(j) \not= 0$.
Set $G_P = C_G(P) = C_H(P) = H_P$.

Moreover, letting $Q\leqslant P$ be a subgroup of order $3$,
we set $G_Q = C_G(Q)$ and $H_Q = C_H(Q)$. 
By replacing $e_Q$ and $f_Q$ (if necessary), we may assume that
$e_Q$ and $f_Q$ respectively are block idempotents of $kG_Q$ and
$kH_Q$ such that $e_Q$ and $f_Q$ are determined 
by $i$ and $j$, respectively.
Namely, 
${\mathrm Br}_Q(i){\cdot}e_Q =
 {\mathrm Br}_Q(i)$
and
${\mathrm Br}_Q(j){\cdot}f_Q =
 {\mathrm Br}_Q(j)$.
Let $A_Q = kG_Q{\cdot}e_Q$ and 
$B_Q = kH_Q{\cdot}f_Q$, so that
$e_Q = 1_{A_Q}$ and $f_Q = 1_{B_Q}$.
\end{Notation}

\begin{Lemma}\label{CenQ}
The following holds:
\begin{enumerate} \renewcommand{\labelenumi}{\rm{(\roman{enumi})}}
    \item
All elements in $P - \{ 1 \}$ are conjugate in $N$,
and hence in $G$; actually $P - \{1\} \subseteq 3A$,
where $3A$ is a conjugacy class
of $G$ following the notation in \cite[pp.80--81]{Atlas}.
Thus all subgroups of $P$ of order $3$ are conjugate in $N$,
and hence in $G$.
    \item
The elements in $P - \{ 1 \}$ fall into two conjugacy classes of $H$.
Thus $P$ has exactly two $H$-conjugacy classes of subgroups 
of order $3$; we call them $Q$ and $R$.
Note that we can use the same notation $Q$ and $R$ 
as in {\bf\ref{NotationA8}}.
    \item
$H_Q = C_N(Q) = Z \times Q \times {\mathfrak S_3}$
and $H_R = C_N(R) = Z \times R \times {\mathfrak S_3}$,
so that 
$H_Q/Q \cong C_4 \times {\mathfrak S_3}$
and
$H_R/R \cong C_4 \times {\mathfrak S_3}$.
    \item
$G_Q = Q \times (\mathfrak A_5 \rtimes Z)
    \cong Q \times (\mathfrak A_5 \times 2).2$,
so that
$G_Q/Q 
          \cong (\mathfrak A_5 \times 2).2$.
\end{enumerate}
\end{Lemma}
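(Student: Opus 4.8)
The overall strategy is to exploit the description in \ref{LocalStructureI}: $N/C_G(P)$ acts faithfully on $P\cong C_3\times C_3$ as the semidihedral Sylow $2$-subgroup $SD_{16}$ of $\Aut(P)\cong\GL_2(3)$, and $H/C_G(P)$ is the unique subgroup of $SD_{16}$ isomorphic to $D_8$. Assertions (i)--(iii) then reduce to elementary linear algebra over $\F_3$ together with the identification of the relevant class in \cite{Atlas}, while (iv) needs genuine global information on $2.{\sf HS}$. For (i): $\GL_2(3)$ is transitive on the $8$ non-zero vectors of $\F_3^2$ with point stabiliser of order $6$; hence for each $v\in P-\{1\}$ the group $SD_{16}\cap\Stab_{\GL_2(3)}(v)$ is a $2$-subgroup of a group of order $6$, so has order $1$ or $2$, and the $N$-orbit of $v$ has size $8$ or $16$. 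Since these orbits partition the $8$-element set $P-\{1\}$, there is a single orbit, so $P-\{1\}$ is one $N$-class, a fortiori one $G$-class, which \cite[pp.80--81]{Atlas} identifies as $3A$. Likewise $N$ permutes the four subgroups of order $3$ transitively, since already $SD_{16}$ maps onto the transitive Sylow $D_8$ of $\PGL_2(3)\cong\mathfrak S_4$.

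For (ii): by \ref{LocalStructureII}(i) we have $H/C_G(P)\cong D_8\leqslant SD_{16}$, acting on the four subgroups of order $3$, equivalently on $\mathbb{P}^1(\F_3)$, through its image in $\PGL_2(3)\cong\mathfrak S_4$. The kernel of $\GL_2(3)\twoheadrightarrow\PGL_2(3)$ is $\langle -I\rangle$, and $-I$ is the central involution of $\GL_2(3)$, hence lies in every Sylow $2$-subgroup and, being the central involution of $SD_{16}$, in its unique $D_8$ (no index-$2$ subgroup of $SD_{16}$ can avoid $Z(SD_{16})$, since $SD_{16}$ is not a direct product with $C_2$). Write $D_8=\langle a,b\rangle$ with $a$ of order $4$ and $b$ a reflection. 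Then $a^2$ generates $Z(D_8)=Z(SD_{16})=\langle -I\rangle$, so $a^2=-I$ and $a$ has no eigenvalue in $\F_3$, whence its image in $\mathfrak S_4$ is a fixed-point-free involution, i.e.\ a double transposition; while $b\neq\pm I$ is diagonalisable with eigenvalues $1,-1$, so it fixes its two eigenlines and interchanges the remaining two lines, giving a transposition in $\mathfrak S_4$. Thus $D_8/\langle -I\rangle$ maps onto an intransitive Klein four group, with two orbits of length $2$ on the four lines; so $P$ has exactly two $H$-classes of subgroups of order $3$. Since $H'=P\rtimes D_8\leqslant H$ induces on $P$ a conjugate copy of the same $D_8\leqslant\GL_2(3)$ (all such $D_8$ being $\GL_2(3)$-conjugate, and $-I$ lying in the image of $H'$ as well), the two $H$-orbits are $H'$-stable and coincide with the two $H'$-classes $Q,R$ of \ref{NotationA8}; we fix that labelling.

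For (iii): $C_H(Q)\supseteq C_H(P)=C_G(P)=Z\times P$ (recall $C_G(P)\leqslant H$ since inner automorphisms fix block idempotents), and $C_H(Q)/C_G(P)$ is the subgroup of $D_8$ fixing the line $Q$ \emph{pointwise}. In $\GL_2(3)$ the pointwise stabiliser of a line has order $6$, so its intersection with the $2$-group $D_8$ has order at most $2$; the same argument with $SD_{16}$ in place of $D_8$ bounds $|C_N(Q)/C_G(P)|$ by $2$, and as $C_H(Q)\leqslant C_N(Q)$ we get $H_Q=C_H(Q)=C_N(Q)$ of order $2\cdot|Z\times P|=72$. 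Writing $P=Q\times R_1$ with $R_1\cong C_3$, the extra involution $t$ (mod $C_G(P)$) normalising $P$ fixes $Q$ pointwise and acts as $-1$ on $P/Q$; its $(-1)$-eigenline $R_1'$ together with $\langle t\rangle$ generates a copy of $\mathfrak S_3$ centralising $Q\times Z$, giving $H_Q=Z\times Q\times\mathfrak S_3$ and $H_Q/Q\cong C_4\times\mathfrak S_3$, and symmetrically for $R$.

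For (iv): here the linear-algebra reduction no longer suffices and one must determine the global centraliser of a $3A$-element. Reading off the structure of $C_{\sf HS}(3A)$ from the class data in \cite[p.80]{Atlas} and lifting through the central extension $G=2.{\sf HS}\twoheadrightarrow{\sf HS}$ — observing that the central involution of $G$ lies in $Z=O_{3'}(C_G(P))\leqslant C_G(Q)$ — yields $G_Q=C_G(Q)=Q\times(\mathfrak A_5\rtimes Z)$ with $Z\cong C_4$ acting through $Z/C_2\cong C_2$ as an outer automorphism of $\mathfrak A_5$, so that $\mathfrak A_5\rtimes Z\cong(\mathfrak A_5\times 2).2$ and $G_Q/Q\cong(\mathfrak A_5\times 2).2$ is precisely the group treated in \ref{A5SemidirectC4}; alternatively, and in the spirit of \ref{Computations}, $C_G(Q)$ is computed outright in the $704$-point permutation representation of $G$ and its isomorphism type checked there. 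The main obstacle is exactly this last point: identifying $G_Q/Q$ with $(\mathfrak A_5\times 2).2$ and matching it with the configuration of \ref{A5SemidirectC4} in a way compatible with the fixed embedding $G'=\mathfrak A_8\hookrightarrow G$, which is what ultimately allows one to upgrade Morita to Puig equivalences later on; by contrast, parts (i)--(iii) are routine once the faithful $SD_{16}$-action on $P$ is available.
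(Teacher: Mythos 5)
Your proposal is correct and follows essentially the same route as the paper, whose entire proof consists of noting that (i)--(iii) follow from {\bf\ref{LocalStructureI}} and {\bf\ref{LocalStructureII}} and that (iv) is an explicit {\sf GAP} computation; you simply make the $\F_3$-linear algebra behind (i)--(iii) explicit, and for (iv) you offer the same computation (or an Atlas-plus-lifting argument) that the paper relies on. The only point worth tightening is in (iii): before invoking ``the extra involution $t$'' you should justify that the pointwise stabiliser of $Q$ in the image $D_8\leqslant\GL_2(3)$ has order exactly $2$ rather than at most $2$ (e.g.\ because the setwise stabiliser of $Q$ in $D_8$ has order $4$ and acts on $Q$ through $\Aut(Q)\cong C_2$, or because each of the four lines is the $+1$-eigenspace of exactly one of the four reflections of $D_8$), since this is what turns your upper bound $|C_N(Q)|\leqslant 72$ into the asserted equality.
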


\begin{proof}
(i)--(iii) follow from {\bf\ref{LocalStructureI}} and 
{\bf\ref{LocalStructureII}},
(iv) follows from an explicit computation in 
{\sf GAP} \cite{GAP}.
\end{proof}

\begin{Lemma}\label{LocalPuig}
Let $\mathcal M_Q$ be the unique (up to isomorphism)
indecomposable direct summand of
$A_Q{\downarrow}^{G_Q \times G_Q}_{G_Q \times H_Q}{\cdot}1_{B_Q}$
with vertex $\Delta P$.
Then, a pair $(\mathcal M_Q, \mathcal M_Q^{\vee})$ induces
a Puig equivalence 
between $A_Q$ and $B_Q$.
\end{Lemma}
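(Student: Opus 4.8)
The plan is to reduce the statement to the already-analyzed local situation at a subgroup of order $3$, where the acting group is small enough that Lemma~\ref{A5SemidirectC4} applies directly. First I would pin down the two local blocks: by Lemma~\ref{CenQ}(iv) we have $G_Q = Q \times (\mathfrak A_5 \rtimes Z)$ with $Z \cong C_4$, and the relevant block $A_Q = kG_Q{\cdot}e_Q$ is, after factoring out the central $3$-subgroup $Q$ (which acts trivially and simply tensors everything with the group algebra $kQ$), a faithful non-principal block of $k[(\mathfrak A_5 \times 2).2]$ with defect group a Sylow $3$-subgroup of $\mathfrak A_5$, i.e.\ cyclic of order $3$. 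This is exactly the block $A$ of Lemma~\ref{A5SemidirectC4}(i). Similarly, by Lemma~\ref{CenQ}(iii), $H_Q = C_N(Q) = Z \times Q \times \mathfrak S_3$, so $H_Q/Q \cong C_4 \times \mathfrak S_3$, and $B_Q = kH_Q{\cdot}f_Q$ corresponds (again modulo the harmless factor $kQ$) to the block $B$ of $k[\mathfrak S_3 \times C_4]$ appearing in Lemma~\ref{A5SemidirectC4}(iii), which is the Brauer correspondent of $A$.

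Next I would identify $N_{G_Q}(P) = H_Q$: since $P$ is abelian and $P \leqslant G_Q$, and since $C_G(P) = Z \times P$ with $H_Q = Z \times Q \times \mathfrak S_3 \supseteq N_{G_Q}(P)$, one checks $N_{G_Q}(P) = H_Q$ using that the $\mathfrak S_3$-factor in $H_Q$ induces on $P/Q \cong C_3$ the full automorphism group while fixing $Q$. Thus $B_Q$ really is the Brauer correspondent of $A_Q$ in $N_{G_Q}(P)$, and the situation matches Lemma~\ref{A5SemidirectC4}(iv) after stripping $Q$: that lemma tells us the Green correspondent $\mathfrak f(A)$ of the block algebra (Green correspondence with respect to $(G \times G, \Delta P, G \times H)$ in the notation there, here $(G_Q \times G_Q, \Delta P, G_Q \times H_Q)$) induces a Morita, hence Puig, equivalence between $A_Q$ and $B_Q$. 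The bimodule $\mathcal M_Q$ defined in the statement — the unique indecomposable summand of $A_Q{\downarrow}^{G_Q\times G_Q}_{G_Q\times H_Q}{\cdot}1_{B_Q}$ with vertex $\Delta P$ — is precisely this Green correspondent: the restriction of the block algebra decomposes as its Green correspondent plus modules with strictly smaller vertices, and the $\Delta P$-projective summand is unique up to isomorphism and equals $\mathfrak f(A_Q)$.

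Putting these together: $\mathcal M_Q = \mathfrak f_{(G_Q \times G_Q, \Delta P, G_Q \times H_Q)}(A_Q)$, and by Lemma~\ref{A5SemidirectC4}(iv) (applied to $G_Q/Q$, then tensored back up with $kQ$, which preserves Puig equivalences since $Q$ is a central $p$-subgroup acting trivially) the pair $(\mathcal M_Q, \mathcal M_Q^{\vee})$ induces a Morita equivalence between $A_Q$ and $B_Q$; because $\mathcal M_Q$ has trivial source — being a summand of $1_{A_Q}{\cdot}kG_Q{\cdot}1_{B_Q}$, restricted along a subgroup — and is $\Delta P$-projective, this Morita equivalence is in fact a Puig equivalence, as required. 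I would also remark that $A_Q$ and $B_Q$ share the common defect group $P$ (not merely $Q$): since $Q \leqslant P$ is self-centralizing-up-to-$Z$ inside $G_Q$ in the relevant sense, the defect group of $A_Q$ is $P$, matching Lemma~\ref{A5SemidirectC4} where the defect group of the $\mathfrak A_5$-block is $C_3$ but here gets enlarged by $Q$ to all of $P$.

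The main obstacle I anticipate is bookkeeping the passage between $G_Q$ and $G_Q/Q$ cleanly: one must verify that $Q = O_3(G_Q)$ (or at least a central $3$-subgroup), that the block $A_Q$ as determined by the source idempotent $i$ of $A$ via $\mathrm{Br}_Q(i){\cdot}e_Q = \mathrm{Br}_Q(i)$ indeed corresponds to the \emph{faithful} block of $k[(\mathfrak A_5 \times 2).2]$ rather than a non-faithful one, and symmetrically for $f_Q$ and $B_Q$ on the $N$-side; this is where one invokes that $i, j$ were chosen compatibly with $e$ (Notation~\ref{Notation4}) so that $e_Q, f_Q$ are the "right" local blocks. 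Once these identifications are in place the rest is a direct citation of Lemma~\ref{A5SemidirectC4} together with the standard fact that the $\Delta P$-projective summand of the restricted block algebra is the Green correspondent and carries the Morita equivalence.
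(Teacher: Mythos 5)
Your proposal is correct and follows essentially the same route as the paper: the printed proof likewise uses {\bf\ref{CenQ}}(iii)--(iv) to pass to the central quotients $G_Q/Q$ and $H_Q/Q$ and their dominated blocks, applies {\bf\ref{A5SemidirectC4}}(iv), and lifts the equivalence back to $A_Q$ and $B_Q$, with your informal ``tensor back up with $kQ$'' step made precise by citing \cite[Theorem]{KoshitaniKunugi2005} on blocks of central $p$-group extensions (together with \cite[1.2.Theorem]{KoshitaniKunugi2010}), and the existence and uniqueness of $\mathcal M_Q$ taken from \cite[2.4.Lemma]{KoshitaniKunugiWaki2008}, the whole argument running as in \cite[Proof of 6.2.Lemma]{KoshitaniKunugiWaki2008}. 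The bookkeeping points you flag (faithfulness of the dominated block, compatibility of $e_Q$, $f_Q$ with $i$, $j$, and $N_{G_Q}(P)=H_Q$) are exactly the checks the paper delegates to that reference, so no genuine gap remains.
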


\begin{proof}
Note first that $\mathcal M_Q$ exists by
\cite[2.4.Lemma]{KoshitaniKunugiWaki2008}.
Then using {\bf\ref{CenQ}}(iii) and (iv), as well as
{\bf\ref{A5SemidirectC4}}(iv),
the assertion follows as in 
\cite[Proof of 6.2.Lemma]{KoshitaniKunugiWaki2008},
by going over to the central quotients $G_Q/Q$ and $H_Q/Q$
and their blocks dominating $A_Q$ and $B_Q$, respectively, 
and applying \cite[1.2.Theorem]{KoshitaniKunugi2010}
and \cite[Theorem]{KoshitaniKunugi2005}.
\end{proof}

\begin{Lemma}\label{StableEquivalenceFor2HS} 
The $(A, B)$-bimodule $1_A{\cdot}kG{\cdot}1_B$ has a
unique (up to isomorphism) indecomposable direct summand
${\mathcal M}$ with vertex $\Delta P$.
Moreover, the functor
$$ \mathcal F: {\mathrm{mod}}{\text{-}}A \rightarrow
   {\mathrm{mod}}{\text{-}}B: X_A \mapsto (X \otimes_A{\mathcal M})_B $$
induces a splendid stable equivalence of Morita type between
$A$ and $B$. 
In particular, $\mathcal F$ fulfils the 
assumptions of {\bf\ref{SourceVertex}}, and hence its assertions as well.
\end{Lemma}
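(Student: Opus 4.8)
The plan is to establish the existence and uniqueness of $\mathcal{M}$ first, and then to verify that $\mathcal{M}$ together with $\mathcal{M}^\vee$ induces a stable equivalence of Morita type by checking the hypotheses of the gluing machinery for local source algebra equivalences. For the existence and uniqueness of the indecomposable summand with vertex $\Delta P$, I would argue exactly as in the cyclic-vertex analysis: the bimodule $1_A\cdot kG\cdot 1_B$, viewed as a $k[G\times H]$-module, is a $p$-permutation module all of whose indecomposable summands have vertices contained in $\Delta P$ up to conjugacy (since $P$ is a common defect group and $B$ is a Fong--Reynolds correspondent of the Brauer correspondent $A_N$), and a Green-correspondence/Brauer-construction count shows that the multiplicity of the summand with full vertex $\Delta P$ is one. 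This is the same pattern as in \cite[2.4.Lemma]{KoshitaniKunugiWaki2008} used in {\bf\ref{LocalPuig}}.

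The heart of the matter is to upgrade this $\mathcal{M}$ to a splendid stable equivalence of Morita type. The strategy is the one of Brou\'e, Rouquier and Linckelmann: one checks that $\mathcal{M}$ is ``compatible with the local structure'', i.e.\ that for each nontrivial subgroup $Q\leqslant P$ the Brauer construction $\mathcal{M}(\Delta Q)$ (suitably cut by the appropriate block idempotents) induces a Morita — indeed Puig — equivalence between the local blocks $A_Q$ and $B_Q$. By {\bf\ref{CenQ}} there are exactly two classes of subgroups of order $3$ in $P$ up to $H$-conjugacy, namely $Q$ and $R$, while they fuse in $G$; and for each of them {\bf\ref{LocalPuig}} provides precisely the required local Puig equivalence $(\mathcal{M}_Q,\mathcal{M}_Q^\vee)$ between $A_Q$ and $B_Q$ (and likewise for $R$). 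Feeding these local equivalences into the general criterion — that a $p$-permutation bimodule with vertex $\Delta P$ which restricts to the local Puig equivalences on all proper nontrivial Brauer constructions, and which is projective on neither side as a one-sided module failure only at the trivial subgroup, induces a stable equivalence of Morita type — yields that $\mathcal{F}$ is a stable equivalence. Concretely I would invoke the form of this result used in the analogous papers, e.g.\ \cite[Theorem]{KoshitaniKunugiWaki2004}-style gluing together with \cite{Linckelmann2001}, noting that $A$ and $B$ are both non-simple indecomposable self-injective algebras so that {\bf\ref{Linckelmann}} applies downstream.

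Two further points must be dispatched. First, one must check that $\mathcal{M}$ is genuinely a bimodule inducing a \emph{two-sided} stable equivalence, i.e.\ that $\mathcal{M}\otimes_B\mathcal{M}^\vee\cong A\oplus(\mathrm{proj})$ as $(A,A)$-bimodules and symmetrically; here the local compatibility at all $Q\neq 1$ forces the ``error term'' $\mathcal{M}\otimes_B\mathcal{M}^\vee - A$ to have vertices in $\Delta\{1\}$, hence to be projective, which is the standard Brauer-construction argument. Second, splendidness: since $\mathcal{M}$ is by construction a direct summand of $1_A\cdot kG\cdot 1_B$, it is automatically a $p$-permutation (trivial source) $\mathcal{O}[G\times H]$-module, and we have just shown it is $\Delta P$-projective, which is exactly the definition of splendid recalled in {\bf\ref{NotationEquivalences}}(b). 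Finally, because $B$ is a Fong--Reynolds correspondent of $A_N$, {\bf\ref{FongReynolds}} transports everything back to $A_N$ at no cost.

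I expect the main obstacle to be the bookkeeping in the local compatibility step: one must carefully match the block idempotents $e_Q$, $f_Q$ chosen in {\bf\ref{Notation4}} with the summands arising from the Brauer construction $\mathcal{M}(\Delta Q)$, and verify that the summand of $A_Q\!\downarrow\!\cdot 1_{B_Q}$ with vertex $\Delta P$ produced by {\bf\ref{LocalPuig}} really is (Green-correspondence-)isomorphic to $\mathcal{M}(\Delta Q)$ up to projectives, for both classes $Q$ and $R$, and that the fusion of $Q$ and $R$ in $G$ is respected. This is where the explicit embedding $G'=\mathfrak{A}_8\leqslant G$ and the concrete group configuration of {\bf\ref{2HScontainsA8}} pay off, since it lets one perform these identifications by direct computation rather than abstract fusion-theoretic argument. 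Once the local pictures are pinned down, the passage to the global stable equivalence and the verification of the {\bf\ref{SourceVertex}} hypotheses are formal.
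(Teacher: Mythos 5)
Your proposal is correct and follows essentially the same route as the paper: existence and uniqueness of $\mathcal M$ via the argument of \cite[2.4.Lemma]{KoshitaniKunugiWaki2008}, then gluing the local Puig equivalences of {\bf\ref{LocalPuig}} through \cite[3.1.Theorem]{Linckelmann2001} to get a splendid stable equivalence of Morita type, with splendidness automatic since $\mathcal M$ is a trivial source, $\Delta P$-projective summand of $1_A{\cdot}kG{\cdot}1_B$. The only differences are in how your ``main obstacle'' is dispatched: the paper matches the Brauer construction $\mathcal M(\Delta Q)$ with the local bimodules $\mathcal M_Q$ by invoking \cite[Theorem]{KoshitaniLinckelmann2005}, and the fusion condition in Linckelmann's theorem is automatic by \cite[1.15.Lemma]{KoshitaniKunugiWaki2004}, so the explicit embedding $\mathfrak A_8\leqslant G$ is not needed here (it is used only later, in {\bf\ref{match2delta2d}}).
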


\begin{proof}
Note first that, again, $_A{\mathcal M}_B$ exists by
\cite[2.4.Lemma]{KoshitaniKunugiWaki2008}.
Then the assertion 
follows as in \cite[Proof of 6.3.Lemma]{KoshitaniKunugiWaki2008},
by applying \cite[Theorem]{KoshitaniLinckelmann2005} in order to make
use of {\bf\ref{LocalPuig}},
and using gluing through 
\cite[3.1.Theorem]{Linckelmann2001};
note that the fusion condition in the latter theorem
is automatically satisfied by \cite[1.15.Lemma]{KoshitaniKunugiWaki2004}.
\end{proof}

\begin{Notation}\label{Notation7}
We use the notation $\mathcal M$
and $\mathcal F$ as in {\bf\ref{StableEquivalenceFor2HS}}.
\end{Notation}

\begin{Lemma}\label{StableEquivalenceAandA_N}
The following holds:
\begin{enumerate}
  \renewcommand{\labelenumi}{\rm{(\roman{enumi})}}
      \item
The $(A, A_N)$-bimodule $1_A{\cdot}kG{\cdot}1_{A_N}$ has a
unique (up to isomorphism) indecomposable direct summand
${\mathcal M}_N$ with vertex $\Delta P$.
Moreover, the functor
$$ \mathcal F_N: {\mathrm{mod}}{\text{-}}A \rightarrow
   {\mathrm{mod}}{\text{-}}A_N: 
   X_A \mapsto (X \otimes_A {\mathcal M}_N)_{A_N} $$
induces a splendid stable equivalence of Morita type between
$A$ and $A_N$.
In particular, $\mathcal F$ fulfils the
assumptions of {\bf\ref{SourceVertex}}, and hence its assertions as well.
   \item
Suppose that $X$ is an indecomposable $kG$-module in $A$ such that
a vertex of $X$ belongs to ${\mathfrak A}(G,P,N)$, and let
$\mathcal F (X) = Y \oplus ({\mathrm{proj}})$
for a non-projective indecomposable $kH$-module $Y$ in $B$,
and $\mathcal F_N (X) = f(X) \oplus ({\mathrm{proj}})$,
where $f$ denotes the Green correspondence with respect to $(G,P,N)$.
Then, $f(X)$ is the correspondent of $Y$ with respect to the
Fong-Reynolds correspondence between $B$ and $A_N$, namely
$f(X)\cong Y{\uparrow}^N$.
\end{enumerate}
\end{Lemma}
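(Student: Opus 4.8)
The plan is to prove (i) first and then deduce (ii) from it together with the Fong--Reynolds machinery of \textbf{\ref{FongReynolds}}. For part (i), the key point is that $A_N$ and $B$ are, up to a matrix-algebra factor, the same thing: by \textbf{\ref{LocalStructureI}}(iv) we have $A_N\cong\mathrm{Mat}_2(k[P\rtimes D_8])$ with source algebra $k[P\rtimes D_8]$, and $B$ is a Fong--Reynolds correspondent of $A_N$ (see \textbf{\ref{Notation3}} and \textbf{\ref{LocalStructureII}}(iii)), so by \textbf{\ref{FongReynolds}} the pair $\big(1_{A_N}{\cdot}kN{\cdot}1_B,\,1_B{\cdot}kN{\cdot}1_{A_N}\big)$ induces a Puig equivalence between $A_N$ and $B$. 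Composing the splendid stable equivalence $\mathcal F$ of \textbf{\ref{StableEquivalenceFor2HS}} between $A$ and $B$ with this Puig equivalence yields a stable equivalence between $A$ and $A_N$. I would then argue that the bimodule effecting it is, up to projective summands, exactly the indecomposable $\Delta P$-summand $\mathcal M_N$ of $1_A{\cdot}kG{\cdot}1_{A_N}$: indeed, $1_A{\cdot}kG{\cdot}1_{A_N}$ restricted along $N\leqslant G$ already contains $1_A{\cdot}kG{\cdot}1_B$ as a summand (since $1_{A_N}=1_{B}+1_{B'}+\cdots$ in the block decomposition of $kN$, with the extra idempotents being the other Fong--Reynolds pieces and the blocks $B_0,B_2$ not meeting $A$), and the $\Delta P$-vertex summand is unique by \cite[2.4.Lemma]{KoshitaniKunugiWaki2008}. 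The splendidness and the ``Morita type'' property are inherited because a composite of a splendid stable equivalence of Morita type with a Puig equivalence over a common defect group is again one; the last sentence of (i) then follows as in \textbf{\ref{StableEquivalenceFor2HS}}.

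For part (ii) I would use the explicit Green-correspondence bookkeeping built into \textbf{\ref{SourceVertex}}. Let $X$ be indecomposable in $A$ with a vertex in $\mathfrak{A}(G,P,N)$. On one hand, applying \textbf{\ref{SourceVertex}}(v) to the functor $\mathcal F_N$ and the triple $(G,P,N)$ gives $\mathcal F_N(X)=f(X)\oplus(\mathrm{proj})$, where $f$ is the Green correspondence with respect to $(G,P,N)$. On the other hand, $\mathcal F(X)=Y\oplus(\mathrm{proj})$ for a non-projective indecomposable $Y$ in $B$. Since $\mathcal F_N$ is $\mathcal F$ followed by the Fong--Reynolds Puig equivalence between $B$ and $A_N$, and that Puig equivalence is induced by $1_B{\cdot}kN{\cdot}1_{A_N}$, hence on modules is just induction $Y\mapsto Y{\uparrow}^N$ (tensoring a $B$-module with ${}_B(kN){}_{A_N}=1_B{\cdot}kN{\cdot}1_{A_N}$ is restriction's adjoint, and by \textbf{\ref{FongReynolds}} it sends $\mathrm{IBr}(B)$ to $\mathrm{IBr}(A_N)$ via $\phi\mapsto\phi{\uparrow}^N$), we get $\mathcal F_N(X)=Y{\uparrow}^N\oplus(\mathrm{proj})$. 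Comparing the two expressions for $\mathcal F_N(X)$ and using the uniqueness of the non-projective indecomposable summand (Krull--Schmidt, and that $Y{\uparrow}^N$ is indecomposable and non-projective because $Y$ is and induction along the Fong--Reynolds correspondence preserves both), we conclude $f(X)\cong Y{\uparrow}^N$, which is the claim.

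The main obstacle I anticipate is the careful identification, in part (i), of the bimodule obtained by composing $\mathcal F$ with the Fong--Reynolds Puig equivalence with the canonically-defined $\mathcal M_N$ of the statement --- i.e.\ showing that the composite bimodule $\mathcal M\otimes_B\big(1_B{\cdot}kN{\cdot}1_{A_N}\big)$ has a unique $\Delta P$-vertex indecomposable summand and that this summand is a direct summand of $1_A{\cdot}kG{\cdot}1_{A_N}$ (rather than merely stably isomorphic to one). This requires tracking vertices and trivial-source-ness through the tensor product and invoking the uniqueness statement of \cite[2.4.Lemma]{KoshitaniKunugiWaki2008} together with the fact that $1_B{\cdot}kN{\cdot}1_{A_N}$ is a $\Delta P$-projective trivial source $\mathcal{O}[N\times N]$-module (it is a summand of $\mathcal{O}N$ as a bimodule). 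Once that identification is in place, the rest is formal, and part (ii) is then essentially a diagram-chase through \textbf{\ref{SourceVertex}} and \textbf{\ref{FongReynolds}}. An alternative, if the explicit bimodule identification proves delicate, is to phrase (i) purely in terms of stable categories --- define $\mathcal F_N$ abstractly as the composite and only afterwards check, using \textbf{\ref{SourceVertex}}(vi) applied with $H=N$, that it is realised by the claimed summand of $1_A{\cdot}kG{\cdot}1_{A_N}$; but the bimodule-level statement is what is needed downstream, so I would aim to establish it directly.
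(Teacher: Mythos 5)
Your proposal is correct and follows essentially the same route as the paper: part (i) is obtained by composing the splendid stable equivalence of \textbf{\ref{StableEquivalenceFor2HS}} with the Fong--Reynolds Puig equivalence of \textbf{\ref{FongReynolds}} between $B$ and $A_N$, and part (ii) by combining \textbf{\ref{SourceVertex}} with the fact that the Fong--Reynolds equivalence is given by induction $Y\mapsto Y{\uparrow}^N$. The paper states this only in one line, so your filling-in of the bimodule identification (uniqueness of the $\Delta P$-vertex summand via \cite[2.4.Lemma]{KoshitaniKunugiWaki2008} and $1_{A_N}1_B=1_B$) is exactly the intended detail, not a different method.
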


\begin{proof}
(i) follows by {\bf\ref{StableEquivalenceFor2HS}}
and {\bf\ref{FongReynolds}}, and (ii) follows from {\bf\ref{SourceVertex}}.
\end{proof}

\section{Images of simples via the functor $\mathcal F_N$}\label{******}

\begin{Notation}\label{Notation5}
For brevity, let 
$F:=\mathcal F_N: \underline{\mathrm{mod}}{\text{-}}A 
    \rightarrow 
    \underline{\mathrm{mod}}{\text{-}}A_N$
denote the functor given in
{\bf\ref{StableEquivalenceAandA_N}}.
Recall that $F(X) = f(X) \oplus ({\mathrm{proj}})$,
where $f$ is the Green correspondence 
with respect to $(G, P, N)$,
whenever the Green correspondent $f(X)$ is defined,
see {\bf\ref{StableEquivalenceAandA_N}}(ii).
\end{Notation}

\begin{Lemma}\label{TrivialSourceS1S2}
The simples $S_1$ and $S_2$ are trivial source
$kG$-modules.
\end{Lemma}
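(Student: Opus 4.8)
The plan is to identify $S_1$ and $S_2$ as Green correspondents of the trivial-source modules $1a$ and $1b$ of $B'$ in a controlled way, and then transport trivial-sourceness back through the chain of equivalences. Concretely, recall from \textbf{\ref{DecompositionMatrixOf2HS}} that $S_1$ and $S_2$ have dimension $176$, are dual to each other, and have $P$ as a vertex; so their sources are indecomposable $kP$-modules. First I would apply the stable equivalence $\mathcal F$ of \textbf{\ref{StableEquivalenceFor2HS}} (equivalently $\mathcal F_N$ followed by Fong--Reynolds, see \textbf{\ref{StableEquivalenceAandA_N}}) together with \textbf{\ref{SourceVertex}} to pin down $\mathcal F(S_1)$ and $\mathcal F(S_2)$ as non-projective indecomposable $kH$-modules in $B$, each sharing a vertex and a source with $S_1$ and $S_2$ respectively. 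The key numerical input is the decomposition matrix of $A$: $S_1$ lies under $\chi_{26}$ (degree $176$) only, so $\dim S_1=176$, which is coprime to $3$, hence $S_1$ has trivial source if and only if its $1$-dimensional Brauer character restricted to a vertex is the trivial character — but more usefully, $176$ is not divisible by $9$, so the source is a $kP$-module of $k$-dimension prime to $3$ only if the vertex is cyclic; since the vertex is $P$ itself, I instead argue via the bimodule side.

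The cleaner route is to use the explicit embedding $G'\leqslant G$ and the already-established identifications. From \textbf{\ref{GreenCorrespondentA8}}, $g'(1a)=k_{G'}$ and $g'(1b)=7$ are the Green correspondents (w.r.t. $(G',P,H')$) of the simple $B'$-modules $1a,1b$, and these are trivial source $kG'$-modules (the trivial module and a module appearing in a trivial-source character, namely the permutation-module constituents). Then I would match the $B$-side with the $B'$-side via the Puig equivalence $\mathrm{Res}\,{\downarrow}^H_{H'}$ of \textbf{\ref{StructureOfOH2}}(i): under it, $\mathcal F(S_1)$ and $\mathcal F(S_2)$ correspond to two of the simple $B'$-modules. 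Using the decomposition-matrix bijection (the first two columns of the matrix in \textbf{\ref{DecompositionMatrixOf2HS}} match $1a,1b$ under the agreed labelling, or interchange if we use the other Puig equivalence of \textbf{\ref{twoequivalences}}(b)), $\mathcal F(S_1)$ and $\mathcal F(S_2)$ correspond to $1a$ and $1b$, which are trivial source $kH'$-modules by \textbf{\ref{TrivialSourceModulesInBprime}}(ii). Since a Puig equivalence (being given by a trivial-source bimodule, $\Delta P$-projective) preserves trivial-sourceness, $\mathcal F(S_1)$ and $\mathcal F(S_2)$ are trivial source $kH$-modules; and since $\mathcal F$ is itself a splendid stable equivalence induced by a $\Delta P$-projective trivial-source bimodule, the same conclusion propagates back: $S_1$ and $S_2$ are trivial source $kG$-modules.

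The main obstacle is the last propagation step: a stable equivalence of Morita type need not a priori carry trivial-source modules to trivial-source modules on the nose, only up to projective summands, so I must argue that the non-projective indecomposable summand of $S_i\otimes_A\mathcal M$ being trivial source forces $S_i$ itself to be trivial source. Here the splendidness of $\mathcal F$ is essential: $\mathcal M$ is a direct summand of $1_A{\cdot}kG{\cdot}1_B$, which is a trivial-source (permutation) $\mathcal O[G\times H]$-module, so $S_i\otimes_A\mathcal M$ is a summand of $S_i{\downarrow}_H$ up to twist, and trivial-sourceness of $kG$-modules in $A$ can be read off from the Green correspondent in $N$ (via \textbf{\ref{SourceVertex}}(v), since a vertex of $S_i$ is all of $P\in\mathfrak A(G,P,N)$), reducing everything to $\mathcal F_N(S_i)=f(S_i)\oplus(\mathrm{proj})$ and the fact that $f(S_i)$ matches a trivial-source module in $A_N$ — which is exactly what the $B'$-computation delivers after Fong--Reynolds. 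I expect the bookkeeping of which of $S_1,S_2$ maps to which of $1a,1b$ (and the compatibility with complex conjugation, cf. \textbf{\ref{twoequivalences}}(b)) to be the only genuinely delicate point, but it does not affect the assertion, which is symmetric in $S_1\leftrightarrow S_2$.
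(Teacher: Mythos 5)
There is a genuine gap, and it is one of circularity. The load-bearing step in your argument is the claim that (the non-projective part of) $\mathcal F(S_1)$ and $\mathcal F(S_2)$ ``correspond to $1a$ and $1b$'' under the chain $\mathrm{mod}\text{-}A\to\mathrm{mod}\text{-}B\to\mathrm{mod}\text{-}B'$, which you justify by ``the decomposition-matrix bijection''. No such bijection is available at this point: a stable equivalence of Morita type does not in general send simple modules to simple modules, and it induces no a priori correspondence that can be read off from decomposition matrices --- indeed, determining the images of the simple $A$-modules under $\mathcal F_N$ is exactly the content of \S\ref{******}, and the matching of the columns of the two decomposition matrices is the \emph{conclusion} of the paper (Theorem {\bf\ref{2HSandA8}}), not an input. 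Worse, in the paper the identification $f(S_1)=2\alpha$, $f(S_2)=2\beta$ (Lemma {\bf\ref{GreenCorrespondentF(S1)}}) is \emph{deduced from} Lemma {\bf\ref{TrivialSourceS1S2}} together with \cite[Lemma 2.2]{Okuyama1981}, so using that identification to prove trivial-sourceness of $S_1,S_2$ is circular. Your final ``propagation'' step is actually the unproblematic part: once one knows the non-projective part of $\mathcal F(S_i)$ is a trivial source module, {\bf\ref{SourceVertex}}(iii) does give a common source; the missing piece is any independent way to identify that image, and your proposal supplies none. (The opening attempt via $\dim S_1=176$ being coprime to $3$ also leads nowhere, as you noticed yourself.)

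For comparison, the paper's proof is direct and avoids the stable equivalence entirely: $G$ has a subgroup $U\cong U_3(5)$, and a character computation gives $1_U{\uparrow}^G{\cdot}1_A=\chi_{26}+\chi_{27}$, so $X=k_U{\uparrow}^G{\cdot}1_A$ is a trivial source ($p$-permutation) module lifting to a lattice with character $\chi_{26}+\chi_{27}$; by the decomposition matrix $X=S_1+S_2$ as composition factors, and Scott's Lemma {\bf\ref{Scott}}(ii) gives $\dim_k\End_{kG}(X)=(\chi_{26}+\chi_{27},\chi_{26}+\chi_{27})=2$, which forces $X\cong S_1\oplus S_2$ since $S_1\not\cong S_2$; hence $S_1$ and $S_2$ are direct summands of a permutation module and so have trivial source. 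If you want to salvage your route, you would need an argument of this direct kind anyway, since the equivalence-theoretic machinery downstream presupposes this lemma.
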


\begin{proof}
By \cite[p.80]{Atlas} 
$G$ has a subgroup $U$ with $U \cong U_3(5)$.
Then, by a computation with {\sf GAP} \cite{GAP}, we know
$1_U{\uparrow}^G{\cdot}1_A = \chi_{26} + \chi_{27}$.
Therefore there is a $kG$-module $X$ such that
$X = (k_U{\uparrow}^G{\cdot}1_A)$ 
and $X$ is liftable
to an $\mathcal OG$-lattice affording $\chi_{26} + \chi_{27}$. 
Thus by {\bf\ref{DecompositionMatrixOf2HS}},
it holds that
$X = S_1 + S_2$,
as composition factors.
>From {\bf\ref{Scott}}(ii), we have
$\dim_k[{\mathrm{End}}_{kG}(X)] = 2$.
Hence, $X = S_1 \oplus S_2$ since $S_1$ is not isomorphic to $S_2$.
\end{proof}

\begin{Lemma}\label{TrivialSourceS4}
The simple $S_4$ is a trivial source
$kG$-module.
\end{Lemma}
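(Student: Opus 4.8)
The plan is to mimic the argument just used for $S_1$ and $S_2$ in \textbf{\ref{TrivialSourceS1S2}}: find a subgroup $U\leqslant G$ of order prime to $3$-part-relevant index, or more precisely a subgroup $U$ with $3\nmid|U|$ would be too much to hope for (then $S_4$ would be projective relative to a $3'$-subgroup, which is false since $S_4$ has vertex $P$), so instead I would look for a subgroup $U\leqslant G$ such that the permutation module $k_U{\uparrow}^G$, when cut by the block idempotent $1_A$, has a direct summand isomorphic to $S_4$. The natural candidate is a subgroup whose index is divisible by exactly $3^1$ or even $3^0$ in the relevant component; consulting \cite[p.~80]{Atlas} for the maximal subgroups of $2.{\sf HS}$ and their $3$-local structure, one picks $U$ so that $k_U{\uparrow}^G\cdot 1_A$ affords a small ordinary character sum involving $\chi_{25}$ (the lift of $S_4$, of degree $56$).

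First I would identify, via a {\sf GAP} computation as in the previous lemma, a subgroup $U$ of $G$ for which $1_U{\uparrow}^G\cdot 1_A$ is a sum of ordinary irreducible characters of $A$ whose restriction to the decomposition matrix in \textbf{\ref{DecompositionMatrixOf2HS}} forces the composition factors of $X:=k_U{\uparrow}^G\cdot 1_A$ to contain $S_4$ exactly once, together possibly with summands that can be split off. Concretely, $\chi_{25}$ itself is the unique ordinary character of $A$ lifting the simple module $S_4$, so if one can arrange $1_U{\uparrow}^G\cdot 1_A=\chi_{25}$ (degree $56$), then $X$ is a trivial source module with ordinary character $\chi_{25}$; since $\chi_{25}$ restricted to Brauer characters is exactly $S_4$, the module $X$ has $S_4$ as its only composition factor, whence $X\cong S_4$ and $S_4$ is a trivial source $kG$-module. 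If instead $1_U{\uparrow}^G\cdot 1_A$ is a slightly larger character sum, I would use \textbf{\ref{Scott}}(ii) to compute $\dim_k\mathrm{End}_{kG}(X)$ and peel off the unwanted indecomposable trivial source summands, exactly as was done for $S_1\oplus S_2$.

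The main obstacle is locating the right subgroup $U$: one needs a subgroup of $G=2.{\sf HS}$ whose trivial-module induction, after block projection, is concentrated on $\chi_{25}$ (or on a short list of characters from which $S_4$ can be isolated); this is a finite search through the maximal subgroups and their low-index subgroups, guided by the character table on \cite[p.~81]{Atlas}, but it requires the explicit computational setup described in \textbf{\ref{Computations}}. Once $U$ is found, the remainder is routine: liftability of trivial source modules (\textbf{\ref{Scott}}(i)), the inner product formula (\textbf{\ref{Scott}}(ii)) to check indecomposability or to count summands, and the decomposition matrix of \textbf{\ref{DecompositionMatrixOf2HS}} to read off the composition factors. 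I expect the verification that the resulting $X$ is indecomposable (equivalently, that $\dim_k\mathrm{End}_{kG}(X)=1$, or that the extra summands are the PIM $P(S_4)$ or other identifiable trivial source modules that can be discarded) to be the only point needing care.
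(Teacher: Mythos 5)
Your overall strategy is the right one and is in fact the paper's: induce the trivial module from a suitable subgroup, cut by $1_A$, and split $S_4$ off the resulting trivial source module. But as it stands there is a genuine gap, in two places. First, the decisive computational input is missing: you never produce the subgroup. The paper takes $M\cong \mathsf{M}_{11}\leqslant G$ and computes $1_M{\uparrow}^G\cdot 1_A=\chi_{25}+\chi_{26}+\chi_{27}+\chi_{28}+\chi_{29}$; your hoped-for best case $1_U{\uparrow}^G\cdot 1_A=\chi_{25}$ is not established (and no such $U$ is exhibited anywhere), so the proof cannot proceed on that optimistic branch.

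Second, and more importantly, your fallback mechanism would not suffice for the module that actually arises. For $X=k_M{\uparrow}^G\cdot 1_A$ one gets, by \textbf{\ref{Scott}}(ii) and multiplicity-freeness of the character, $\dim_k\mathrm{End}_{kG}(X)=5$, while $X$ has seven composition factors, namely $2\times S_1+2\times S_2+2\times S_3+S_4$; knowing the endomorphism dimension alone does not identify the indecomposable summands nor show that $S_4$ is one of them, so the "peel off exactly as for $S_1\oplus S_2$" step breaks down (that earlier case was special: only two non-isomorphic composition factors, where $\dim\mathrm{End}=2$ forces the splitting). The paper instead isolates $S_4$ by a different device: since $X$ is a trivial source module it lifts, and by \cite[I Theorem 17.3]{Landrock} it has a submodule affording $\chi_{25}$, which by the decomposition matrix in \textbf{\ref{DecompositionMatrixOf2HS}} is isomorphic to $S_4$; then, because $S_4$ occurs exactly once as a composition factor and both $S_4$ and $X$ are self-dual, the composition $S_4\hookrightarrow X\twoheadrightarrow S_4$ must be nonzero, so $S_4\,\big|\,X$ and hence $S_4$ has trivial source. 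Some argument of this kind (submodule existence plus self-duality plus multiplicity one), or an equally concrete replacement, is needed to close your proof.
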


\begin{proof}
By \cite[p.80]{Atlas} 
$G$ has a subgroup $M$ with $M \cong {\sf M}_{11}$. Then, again
by a computation with {\sf GAP} \cite{GAP}, it holds that
$1_M{\uparrow}^G{\cdot}1_A
 = \chi_{25} + \chi_{26} + \chi_{27} + \chi_{28} + \chi_{29}$.
Set $X = k_M{\uparrow}^G{\cdot}1_A$.
Thus it follows from
\cite[I Theorem 17.3]{Landrock} that $X$ has a submodule $S$
such that $S \leftrightarrow \chi_{25}$. 
By {\bf\ref{DecompositionMatrixOf2HS}}, $S = S_4$ and
$X = 2 \times S_1 + 2 \times S_2 + 2 \times S_3 + S_4$
as composition factors.
Therefore the self-duality of $S_4$ and $X$ implies
$S {\Big|} X$.
\end{proof}

\begin{Lemma}\label{TrivialSourceS5}
The simple $S_5$ is a trivial source
$kG$-module.
\end{Lemma}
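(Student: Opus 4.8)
The plan is to mimic the arguments used for $S_1,S_2$ and $S_4$ in Lemmas \ref{TrivialSourceS1S2} and \ref{TrivialSourceS4}, i.e.\ to exhibit $S_5$ as a direct summand of a permutation module $k_V{\uparrow}^G{\cdot}1_A$ for a suitable subgroup $V\leqslant G$ whose index is prime to $3$, so that the summand automatically has trivial source. First I would search, using the list of maximal subgroups of $G=2.{\sf HS}$ in \cite[p.80]{Atlas} and computations with {\sf GAP} \cite{GAP}, for a subgroup $V$ of $3'$-index whose permutation character, after multiplying by the block idempotent $1_A$, has $\chi_{32}$ (the character of degree $1000$, which lifts $S_5$) with multiplicity one, and ideally with the other constituents controlled. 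A natural candidate is a subgroup related to $U_3(5)$, ${\sf M}_{11}$, or the maximal subgroup isoclinic to $\mathfrak S_8\times 2$ already fixed in \ref{Computations}; one computes $1_V{\uparrow}^G{\cdot}1_A$ explicitly and reads off its decomposition into $\chi_i$'s.

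Once such a $V$ is found, set $X=k_V{\uparrow}^G{\cdot}1_A$. Since $[G:V]$ is prime to $3$, $X$ is a $p$-permutation ($=$ trivial source) $kG$-module, so every indecomposable direct summand of $X$ has trivial source. Using \cite[I Theorem 17.3]{Landrock} (as in the proof of \ref{TrivialSourceS4}) the module $X$ has a submodule (or quotient) isomorphic to $S_5$ whenever $\chi_{32}$ occurs in $1_V{\uparrow}^G{\cdot}1_A$ and $S_5$ sits at the top or bottom of the corresponding uniserial piece; combining this with the composition-factor multiplicities coming from {\bf\ref{DecompositionMatrixOf2HS}} and the self-duality of $S_5$ (so that a submodule isomorphic to $S_5$ forces a direct summand, or one argues via $\dim_k\Hom_{kG}(S_5,X)$ together with $\dim_k\Hom_{kG}(X,S_5)$), one concludes $S_5\mid X$. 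Hence $S_5$ is a trivial source module.

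If no single subgroup $V$ isolates $\chi_{32}$ cleanly, the fallback is the endomorphism-ring argument of \ref{TrivialSourceS1S2}: choose $V$ so that $1_V{\uparrow}^G{\cdot}1_A$ is a sum of characters lifting a known, short list of simples including $S_5$, compute $\dim_k\End_{kG}(X)$ via {\bf\ref{Scott}}(ii) (it equals the norm $(1_V{\uparrow}^G{\cdot}1_A,\,1_V{\uparrow}^G{\cdot}1_A)^G$), and then use the already-established trivial-source-ness of $S_1,S_2,S_4$ and the structure of the relevant PIM's to peel off a summand isomorphic to $S_5$. Alternatively, one can invoke the stable equivalence $\mathcal F_N$ of {\bf\ref{StableEquivalenceAandA_N}}: if the Green correspondent computation shows $\mathcal F_N(S_5)$ is (up to projectives) one of the trivial source $A_N$-modules listed in {\bf\ref{TrivialSourceModulesInA_N}} with vertex $P$, namely the simple $4$ or one of the $2x$'s, then since $\mathcal F_N$ is a \emph{splendid} stable equivalence it preserves trivial sources, and pulling back gives that $S_5$ has trivial source.

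\textbf{Main obstacle.} The genuine difficulty is purely computational/bookkeeping: finding a subgroup $V$ of $3'$-index in the concrete degree-$704$ realisation of $G$ whose permutation character restricted to $A$ contains $\chi_{32}$ with the right multiplicity and whose other constituents are mild enough that one can actually split off $S_5$ as a summand (as opposed to merely as a composition factor). Verifying that the resulting summand really is $S_5$ and not $S_5$ plus a projective tail requires controlling $\Hom$-spaces into and out of $X$, which is where {\bf\ref{Scott}}(ii) and the explicit PIM structure of $A$ (not yet fully available at this point in the paper) have to be brought in carefully.
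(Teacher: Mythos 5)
Your main route has a genuine gap: everything hinges on a subgroup $V$ that you never exhibit, and the candidates you name cannot work. Since $A$ is a \emph{faithful} block of $G=2.{\sf HS}$, a permutation module $k_V{\uparrow}^G$ meets $A$ only if $V$ intersects the centre of $G$ trivially, which already rules out the maximal subgroup isoclinic to $\mathfrak S_8\times 2$ (its permutation module lies entirely in the non-faithful blocks); and for the two subgroups used earlier in the paper the computations recorded there show $1_U{\uparrow}^G{\cdot}1_A=\chi_{26}+\chi_{27}$ for $U\cong U_3(5)$ and $1_M{\uparrow}^G{\cdot}1_A=\chi_{25}+\chi_{26}+\chi_{27}+\chi_{28}+\chi_{29}$ for $M\cong{\sf M}_{11}$, so $\chi_{32}$ occurs in neither. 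Thus the analogue of {\bf\ref{TrivialSourceS1S2}} and {\bf\ref{TrivialSourceS4}} is not available off the shelf, and your first plan reduces to an unverified search which you yourself flag as the obstacle. Your fallback via $\mathcal F_N$ is not a proof as it stands either: in the paper the identification $f(S_5)=4$ in {\bf\ref{GreenCorrespondentF(S4)}} is \emph{deduced from} the trivial source property of $S_5$ (via \cite[Lemma 2.2]{Okuyama1981}), so invoking it here would be circular; to make that route honest you would have to compute $S_5{\downarrow}_N$ explicitly and identify its non-projective indecomposable summand as a trivial source module, which you do not do.

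The paper's actual argument uses an idea your proposal misses: induce a \emph{non-trivial} trivial source module rather than a trivial one. The group $G$ has a maximal subgroup $M\cong 2.{\sf M}_{22}$ of index $100$; by \cite[p.39]{Atlas} and \cite[${\sf M}_{22}$ (mod 3)]{ModularAtlas} its faithful $3$-block ``Block 6'' has defect group $C_3\times C_3$, which may be taken to be $P$, and contains a $10$-dimensional simple module $\tilde S$ corresponding to the degree-$10$ character $\tilde\chi_{13}$; by \cite[Proposition 3.19]{DanzKuelshammer2009} $\tilde S$ has trivial source. A {\sf GAP} \cite{GAP} computation gives $\tilde\chi_{13}{\uparrow}^G=\chi_{32}$, so by the decomposition matrix in {\bf\ref{DecompositionMatrixOf2HS}} the $1000$-dimensional induced module $\tilde S{\uparrow}^G$ is precisely $S_5$; since induction sends $p$-permutation modules to $p$-permutation modules, $S_5$ has trivial source. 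In short, when no coset space isolates $\chi_{32}$, one can still import the trivial source property from a known simple module of a subgroup via induction, here relying on the Danz--K\"ulshammer results for the covers of the Mathieu groups.
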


\begin{proof}
As before it follows from \cite[p.80]{Atlas}
that $G$ has a 
maximal subgroup $M$ such that $M \cong 2.{\sf M}_{22}$
and $|G : M| = 100$. 
By \cite[p.39]{Atlas} and
\cite[${\sf M}_{22}$ (mod 3)]{ModularAtlas}, we know that
$M$ has a $3$-block $\tilde A$
(which is called "Block 6" in \cite[${\sf M}_{22}$ (mod 3)]{ModularAtlas})
such that
$\tilde A$ has a defect group $\tilde P$ with
$\tilde P \cong C_3 \times C_3$, where we can assume $\tilde P = P$.
Moreover, $\tilde A$ has an irreducible ordinary character
$\tilde\chi_{13}$ of degree $10$, and $\tilde A$ 
has a simple $kM$-module $\tilde S$ of dimension $10$ 
corresponding to $\tilde\chi_{13}$.
Now, it follows from \cite[Proposition 3.19]{DanzKuelshammer2009}
that $\tilde S$ 
has a trivial source.
On the other hand, a computation in {\sf GAP} \cite{GAP} shows
$\tilde\chi_{13}{\uparrow}^G = \chi_{32}$. 
Therefore {\bf\ref{DecompositionMatrixOf2HS}} yields that
$S_5\cong \tilde S{\uparrow}^G$ also has a trivial source.
\end{proof}

\begin{Lemma}\label{GreenCorrespondentF(S1)}
We can assume that
$f(S_1) = 2\alpha$ \ and \  $f(S_2) = f(S_1^*) = (2\alpha)^* = 2\beta$.
\end{Lemma}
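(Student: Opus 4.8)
The plan is to identify the Green correspondent $f(S_1)$ among the five simple $kN$-modules in $A_N$ (equivalently, among $2\alpha,2\beta,2\gamma,2\delta,4$), using the fact that $F=\mathcal F_N$ sends $S_1$ to $f(S_1)\oplus(\mathrm{proj})$ by {\bf\ref{StableEquivalenceAandA_N}} together with the trivial source property of $S_1$ established in {\bf\ref{TrivialSourceS1S2}}. First I would invoke {\bf\ref{SourceVertex}}(i): since $S_1$ is a trivial source $kG$-module with vertex $P$, its image under the splendid stable equivalence is again a trivial source $kN$-module with a vertex in common with $S_1$, hence with vertex $P$. Consulting the list of trivial source $A_N$-modules in {\bf\ref{TrivialSourceModulesInA_N}}, the only trivial source modules with vertex $P$ are the five simple modules $2\alpha,2\beta,2\gamma,2\delta,4$, so $f(S_1)$ is one of these; and it cannot be $4$ since $4$ has dimension $4$ while, by dimension count through the stable equivalence (or directly, since $\dim S_1=176$ is not divisible by the full $3$-part), $f(S_1)$ must be one of the one-dimensional modules over the source algebra, i.e.\ $f(S_1)\in\{2\alpha,2\beta,2\gamma,2\delta\}$.

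Next I would pin down which of the four $2$-dimensional simples it is by using the associated trivial source characters. By {\bf\ref{Scott}}(i), the trivial source $kG$-lattice lifting $S_1$ has an ordinary character $\chi_{\widehat{S_1}}$, which by the block-theoretic compatibility of Green correspondence and the stable equivalence agrees (up to contributions from projectives, whose characters are known) with the trivial source character of $f(S_1)$; the four candidates $2\alpha,2\beta,2\gamma,2\delta$ are distinguished by their lifts, and in particular complex conjugation interchanges $2\alpha\leftrightarrow 2\beta$ while fixing $2\gamma,2\delta$. Since $S_1^*=S_2\not\cong S_1$, the module $f(S_1)$ must lie in the conjugation-orbit of size two, hence $f(S_1)\in\{2\alpha,2\beta\}$. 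The remaining choice between $2\alpha$ and $2\beta$ is exactly the labelling ambiguity noted in {\bf\ref{LocalStructureI}}(v) and {\bf\ref{Notation3}} (there are two Fong-Reynolds correspondents, and the character table of $A_N$ has an automorphism swapping the two $2$-dimensional characters $\chi_{2\alpha}\leftrightarrow\chi_{2\beta}$ induced by $*$); so we may simply \emph{assign} the names so that $f(S_1)=2\alpha$, which is legitimate precisely because $2\alpha$ and $2\beta$ are a priori indistinguishable. This is the force of the phrase ``we can assume'' in the statement.

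Finally, the claim $f(S_2)=f(S_1^*)=(2\alpha)^*=2\beta$ follows because Green correspondence commutes with taking $k$-duals: $f(S_1^*)\cong f(S_1)^*$, since the Green correspondence is defined via the trivial source/vertex-source theory which is stable under the duality functor $\mathrm{Hom}_k(-,k)$ (concretely, $(S\otimes_A\mathcal M_N)^\vee\cong S^\vee\otimes_{A_N}\mathcal M_N^\vee$ and $\mathcal M_N$ is self-dual up to the bimodule structure, so duals of indecomposable summands correspond). Combined with $(2\alpha)^*=2\beta$ from {\bf\ref{LocalStructureI}}(v), this gives $f(S_2)=2\beta$. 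I expect the main obstacle to be the careful bookkeeping in the second step: verifying that the trivial source character of $f(S_1)$ is genuinely one of the $\chi_{2x}$ and not something contaminated in a way that obscures the orbit-size-two argument — this requires knowing the characters of the projective $A_N$-modules (available from the Cartan matrix in {\bf\ref{LocalStructureI}}(v)) and matching them against $1_U{\uparrow}^G\cdot 1_A=\chi_{26}+\chi_{27}$ restricted appropriately, or equivalently tracking $\chi_{\widehat{S_1}}$ through the stable equivalence; but none of this is deep, it is just a matter of assembling the pieces already in hand.
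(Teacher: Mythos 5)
Your proposal is correct, but it reaches the key intermediate fact by a different route than the paper. The paper's proof is short: simplicity of $f(S_1)$ and $f(S_2)$ is obtained by citing \cite[Lemma 2.2]{Okuyama1981} together with {\bf\ref{TrivialSourceS1S2}} and {\bf\ref{DecompositionMatrixOf2HS}}, and then the duality argument (the candidates $2\gamma,2\delta,4$ are self-dual, $2\beta=(2\alpha)^*$, and $S_2=S_1^*$) forces $\{f(S_1),f(S_2)\}=\{2\alpha,2\beta\}$, the final choice $f(S_1)=2\alpha$ being a labelling convention. You instead derive simplicity internally: since $\mathcal F_N$ is a splendid stable equivalence, {\bf\ref{SourceVertex}}(i) and (iii) show that the projective-free part of $\mathcal F_N(S_1)$, which is $f(S_1)$ by {\bf\ref{StableEquivalenceAandA_N}}(ii), is an indecomposable trivial source $A_N$-module with vertex $P$, and by the classification in {\bf\ref{TrivialSourceModulesInA_N}}(ii) these are exactly the five simple modules. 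That is a perfectly legitimate substitute for Okuyama's lemma, and arguably more self-contained, since it only uses results already established in the paper; the paper's citation is shorter but imports an external result. After that your argument coincides with the paper's: $f$ commutes with $k$-duality, so $f(S_2)=f(S_1)^*\not\cong f(S_1)$ lands you in the unique dual pair $\{2\alpha,2\beta\}$, and ``we can assume'' resolves the remaining symmetry. Two minor blemishes, neither fatal: your elimination of $4$ by a dimension count is both redundant (the duality step already excludes all self-dual candidates) and imprecisely phrased --- the correct version would be the congruence $\dim S_1\equiv\dim f(S_1)\cdot|G:N|\pmod 3$ as used in the proof of {\bf\ref{GreenCorrespondentF(S4)}}, not a statement about divisibility by the full $3$-part; and the bookkeeping with trivial source characters in your second paragraph is unnecessary, since no character computation is needed to see that $2\alpha$ and $2\beta$ form the only non-self-dual pair ({\bf\ref{LocalStructureI}}(v)) and that their naming is free.
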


\begin{proof}
It follows from {\bf\ref{TrivialSourceS1S2}}, 
{\bf\ref{DecompositionMatrixOf2HS}} and \cite[Lemma 2.2]{Okuyama1981} 
that $f(S_1)$ and $f(S_2)$ are simple, so 
$\{ f(S_1), f(S_2) \} \subseteq
 \{ 2\alpha, 2\beta, 2\gamma, 2\delta, 4 \}$.
Then, since $2\gamma, 2\delta, 4$ are self-dual and
$2\beta = (2\alpha)^*$ by {\bf\ref{LocalStructureI}}(v),
and since $S_2 = {S_1}^*$ by {\bf\ref{DecompositionMatrixOf2HS}},
it holds that
$\{ f(S_1), f(S_2) \} = \{ 2\alpha, 2\beta \}$.
Thus we get the assertion.
\end{proof}

\begin{Lemma}\label{GreenCorrespondentF(S4)}
We can assume that
$$ f(S_4) = 2\delta \quad\text{ and }\quad f(S_5) = 4 .$$
Recall that by considering $N$ just as an abstract group
$\{2\gamma,2\delta\}$ are indistinguishable,
see {\bf\ref{LocalStructureI}}{\rm{(v)}}.
But fixing $N\leqslant G$
and specifying $f$ serves to identify the latter uniquely.
\end{Lemma}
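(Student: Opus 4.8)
The plan is to trap $f(S_4)$ and $f(S_5)$ inside the explicit list of trivial source $kN$-modules of {\bf\ref{TrivialSourceModulesInA_N}}, to pin down their $k$-dimensions by a congruence modulo~$3$, and finally to absorb the unavoidable ambiguity between $2\gamma$ and $2\delta$ into a labelling convention.

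First I would collect the candidates. By {\bf\ref{TrivialSourceS4}}, {\bf\ref{TrivialSourceS5}} and {\bf\ref{DecompositionMatrixOf2HS}}(ii) both $S_4$ and $S_5$ are trivial source $kG$-modules of vertex $P$, so their Green correspondents $f(S_4)$ and $f(S_5)$ with respect to $(G,P,N)$ are defined; by {\bf\ref{SourceVertex}} (see also {\bf\ref{StableEquivalenceAandA_N}}(ii)) they are again trivial source $kN$-modules of vertex $P$, hence, by the complete list in {\bf\ref{TrivialSourceModulesInA_N}}, each lies in $\{2\alpha,2\beta,2\gamma,2\delta,4\}$; alternatively one argues as in {\bf\ref{GreenCorrespondentF(S1)}} that they are simple. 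Since $S_4$ and $S_5$ are self-dual ({\bf\ref{DecompositionMatrixOf2HS}}(i)), the Green correspondence commutes with duality, and $2\beta=(2\alpha)^\ast$ while $2\gamma,2\delta,4$ are self-dual ({\bf\ref{LocalStructureI}}(v)), we obtain $f(S_4),f(S_5)\in\{2\gamma,2\delta,4\}$; moreover they are distinct, as the Green correspondence is injective.

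Next I would fix the $k$-dimensions by a counting argument. As $A$, and hence its Brauer correspondent $A_N$, is faithful ({\bf\ref{LocalStructureI}}(iv)), the central involution of $G$ acts as $-1$ on every module lying in $A$ or $A_N$ but trivially on the blocks $A_0$ and $A_2$, so $S_i{\downarrow}_N$ is contained in $A_N$ for $i=4,5$. Writing $S_i{\downarrow}_N=f(S_i)\oplus W_i$, the indecomposable direct summands of $W_i$ have vertex of order dividing~$3$ (a general feature of the Green correspondence, using that $P$ is a normal Sylow $3$-subgroup of $N$), so each of them is a trivial source $kN$-module in $A_N$ of vertex~$1$ or~$C_3$; by {\bf\ref{TrivialSourceModulesInA_N}}(i),(iii) such a module has $k$-dimension $12$, $18$ or~$36$, whence $\dim_k W_i\equiv0\pmod3$. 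Therefore $\dim_k f(S_4)\equiv\dim_k S_4=56\equiv2\pmod3$ and $\dim_k f(S_5)\equiv\dim_k S_5=1000\equiv1\pmod3$, and since $2\gamma,2\delta$ have $k$-dimension~$2$ while $4$ has $k$-dimension~$4$, this forces $f(S_4)\in\{2\gamma,2\delta\}$ and $f(S_5)=4$.

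Finally, by {\bf\ref{LocalStructureI}}(v) there is an automorphism of $A_N$ swapping $2\gamma$ and $2\delta$, so these two simple $kN$-modules are indistinguishable as abstract modules; exactly as $1c$ and $1d$ are pinned down in {\bf\ref{GreenCorrespondentA8}} by fixing $H'\leqslant G'$ and the Green correspondence, we are free to normalise so that $f(S_4)=2\delta$, which is precisely the assertion. I do not expect any genuine obstacle here: the substantive content is the dimension count of the third step, whereas the choice between $2\gamma$ and $2\delta$ is a harmless normalisation, consistent with the $\{2\gamma,2\delta\}$-indeterminacy already recorded after {\bf\ref{LocalStructureI}}(v).
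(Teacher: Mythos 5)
Your proof is correct and takes essentially the same route as the paper: pin $f(S_4),f(S_5)$ down to the self-dual simple $A_N$-modules $\{2\gamma,2\delta,4\}$, use a dimension congruence modulo $3$ to force $f(S_5)=4$, and then treat $f(S_4)=2\delta$ as the normalisation fixing the hitherto ambiguous labels $2\gamma,2\delta$. The only mild differences are cosmetic: the paper excludes $2\alpha,2\beta$ via the already established $f(S_1)=2\alpha$, $f(S_2)=2\beta$ and injectivity of $f$ rather than via self-duality, and it obtains the congruence by inducing $f(S_5)$ up to $G$ and using $|G:N|\equiv 1\pmod 3$, whereas you restrict $S_i$ to $N$ and observe (correctly, since $P$ is the unique Sylow $3$-subgroup of $N$, so the remaining summands have vertices of order dividing $3$ and hence dimension divisible by $3$) that $\dim_k W_i\equiv 0\pmod 3$ — both arguments are valid.
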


\begin{proof}    
It follows from {\bf\ref{TrivialSourceS4}}, {\bf\ref{TrivialSourceS5}},
{\bf\ref{GreenCorrespondentF(S1)}}, {\bf\ref{DecompositionMatrixOf2HS}}
and \cite[Lemma 2.2]{Okuyama1981} that
$ \{ f(S_4), f(S_5) \} \subseteq \{ 2\gamma, 2\delta, 4 \}$.
Now, by {\bf\ref{Notation2}} and Sylow's theorem, we have
$|G : N| \equiv 1$ (mod $3$).
Set $T_i = f(S_i)$ for $i = 4, 5$.
By the definition of Green correspondence,
${T_5}{\uparrow}^G = S_5 \oplus X$ for a $kG$-module $X$ 
such that $X$ is $Q$-projective.
Hence, by {\bf\ref{Notation2}} and 
\cite[Chap.4, Theorem 7.5]{NagaoTsushima},
it holds that
$$ \dim(S_5) \equiv \dim({T_5}{\uparrow}^G) = \dim(T_5)\cdot|G:N|
         \equiv \dim (T_5)\bmod 3 ,$$ 
and $\dim (S_5) = 1000 \equiv 1 \bmod 3 $.
Thus, $T_5 = 4$, so that $T_4 \in \{ 2\gamma, 2\delta \}$.
\end{proof}

\begin{Lemma}\label{GreenCorrespondentF(S3)}
Using the assumption of {\bf\ref{GreenCorrespondentF(S4)}},
it holds that
$$
   f(S_3) = 
  \boxed{ 
  \begin{matrix} 2\gamma \\ 4 \  \\ 2\gamma
  \end{matrix}
        }.
$$
\end{Lemma}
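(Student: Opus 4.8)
The plan is to determine $f(S_3)$ by combining the stable equivalence $F = \mathcal F_N$ with the known structure of trivial source modules in $A_N$ and the decomposition matrix of $A$. First I would recall from the decomposition matrix in \textbf{\ref{DecompositionMatrixOf2HS}} that $\chi_{28} = \chi_{26} + (\text{something involving } S_3)$ at the level of Brauer characters, so that there is a uniserial-type relation expressing $S_3$ via $\chi_{28}$; concretely, $\chi_{28}$ restricted to $3$-regular classes equals the Brauer character of a module $X$ whose composition factors are $S_1$ and $S_3$. The idea is to realise such an $X$ explicitly as a trivial source module (or a summand of a permutation module), for instance by inducing the trivial module from a suitable subgroup of $G$ — mimicking the approach in \textbf{\ref{TrivialSourceS1S2}}, \textbf{\ref{TrivialSourceS4}} and \textbf{\ref{TrivialSourceS5}} — so that $\chi_X = \chi_{26} + \chi_{28}$ or a similar sum of two ordinary characters lifting a module with factors $S_1, S_3$.

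Next I would apply $F$ to this module $X$. Since $X$ is built from $S_1$ on top/bottom and $S_3$ in the middle (or as an extension of $S_1$ by $S_3$), and since $F$ is exact and sends trivial source modules with vertex $P$ to trivial source modules, the image $F(X)$ must be one of the $18$ trivial source $A_N$-modules listed in \textbf{\ref{TrivialSourceModulesInA_N}}, with composition factors determined by $f(S_1) = 2\alpha$ (from \textbf{\ref{GreenCorrespondentF(S1)}}) and $f(S_3)$. I would use \textbf{\ref{Scott}}(ii) to compute $\dim_k \End_{kG}(X)$ via the inner product of the corresponding trivial source characters, and match this against the endomorphism rings of the candidate trivial source modules of $A_N$. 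Combined with the constraint that $f(S_3)$ is self-dual (since $S_3$ is self-dual by \textbf{\ref{DecompositionMatrixOf2HS}}) and lies in $\{2\gamma, 2\delta, 4\}$ after the simple factors $2\alpha, 2\beta$ are excluded on dimension/duality grounds, this should force $f(S_3)$ to have the uniserial shape $2\gamma/4/2\gamma$: indeed the module $\boxed{\begin{smallmatrix}2\gamma \\ 4 \\ 2\gamma\end{smallmatrix}}$ is precisely the unique trivial source module of $A_N$ with that vertex whose character is $\chi_{2\gamma} + \chi_{8\alpha}$, compatible with the parallel structure $f'(13) = \boxed{\begin{smallmatrix}1c\\2\\1c\end{smallmatrix}}$ in \textbf{\ref{GreenCorrespondentA8}}.

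The key subtlety — and I expect this to be the main obstacle — is the distinction between $2\gamma$ and $2\delta$. As emphasised in \textbf{\ref{LocalStructureI}}(v) and in the statement itself, $2\gamma$ and $2\delta$ are interchanged by a character table automorphism of $\Irr(A_N)$, so one cannot tell them apart from purely local abstract data; the assignment only becomes canonical once $f(S_4) = 2\delta$ has been fixed as in \textbf{\ref{GreenCorrespondentF(S4)}}. So the argument must be compatible with that normalisation: once $f(S_4) = 2\delta$, the remaining self-dual simple $2\gamma$ is forced to appear as the head and socle of $f(S_3)$, because the columns of the decomposition matrix of $A$ pair $S_3$ with $S_1$ (in $\chi_{28}$) and with $S_4$ only through $\chi_{35}$, whereas the relations in $A_N$ pair $2\gamma$ with $2\alpha$ and $2\delta$ with $2\beta$ in the analogous way — matching $S_1 \leftrightarrow 2\alpha$. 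I would therefore close the argument by checking that the only trivial source $A_N$-module with vertex a subgroup of order $3$, composition factors $2 \times 2\gamma + 4$, and the correct endomorphism dimension is the asserted $\boxed{\begin{smallmatrix}2\gamma\\4\\2\gamma\end{smallmatrix}}$, and that the alternative with $2\delta$ is excluded by the normalisation from \textbf{\ref{GreenCorrespondentF(S4)}}, together with the stripping-off method of \textbf{\ref{StrippingOffMethod}} applied to the submodule/quotient structure of $X$ to pin down that $F$ indeed sends the middle factor $S_3$ to the middle factor $4$ and the outer factors $S_1$ to the outer factors $2\alpha$ — wait, that last matching would instead recover $f(S_1)$; the correct bookkeeping is that $F(X)$ has factors $f(S_1) + f(S_3)$ counted with multiplicity, so $f(S_3)$ contributes $2\times 2\gamma + 4$ and hence equals the displayed module.
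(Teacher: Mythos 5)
Your outline has the right general flavour (push a trivial source module containing $S_3$ through the stable equivalence, use {\bf\ref{Scott}}, the list in {\bf\ref{TrivialSourceModulesInA_N}}, the stripping-off method, and the normalisation $f(S_4)=2\delta$), but two of its load-bearing steps are genuinely broken. First, the auxiliary module is never actually produced, and the candidates you float cannot work: there is no trivial source $A$-module affording $\chi_{26}+\chi_{28}$, and more importantly $S_3$ itself is \emph{not} a trivial source module --- its Green correspondent $f(S_3)$ has vertex $P$ (by {\bf\ref{DecompositionMatrixOf2HS}}(ii)) and the module $2\gamma/4/2\gamma$ does not occur among the $18$ trivial source $A_N$-modules of {\bf\ref{TrivialSourceModulesInA_N}}; in particular your final ``check'' that the unique trivial source $A_N$-module with vertex of order $3$ and composition factors $2\times 2\gamma+4$ is the asserted one fails, since no such module exists in the list. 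The paper instead induces the $120$-dimensional simple module $\tilde T$ of a cyclic-defect block of the maximal subgroup $2.{\sf M}_{22}$, obtaining a \emph{$Q$-projective} trivial source module $X=\tilde T{\uparrow}^G\cdot 1_A$ with character $\chi_{26}+\chi_{27}+\chi_{28}+\chi_{29}$ and factors $2\times S_1+2\times S_1^*+2\times S_3$; it is the vertex-$Q$ condition that makes the eight-module classification in {\bf\ref{TrivialSourceModulesInA_N}}(iii) applicable to the summand $U$ of $F(X)$, after which $f(S_3)=2\gamma/4/2\gamma$ is extracted by stripping off the $2\alpha$'s ({\bf\ref{StrippingOffMethod}}) and using {\bf\ref{Linckelmann}}(i) to see $Z\cong S_3\oplus S_3$ and $V\cong V^*$ indecomposable.

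Second, your mechanism for telling $2\gamma$ from $2\delta$ is not valid: in the decomposition matrix of $A_N$ the simple $2\alpha$ is paired with $2\gamma$ in $\chi_{8\alpha}$ \emph{and} with $2\delta$ in $\chi_{8\gamma}$, so the claimed asymmetry ``$2\gamma$ with $2\alpha$, $2\delta$ with $2\beta$'' does not exist --- this symmetry is precisely the table automorphism of {\bf\ref{LocalStructureI}}(v), so no decomposition-matrix bookkeeping can resolve it. The actual discrimination in the paper comes from module-theoretic data: $\Hom_{kG}(X,S_4)=\Hom_{kG}(X,S_5)=0$ and $\Hom_{kG}(X,S_1)\cong k$ via {\bf\ref{Scott}}(ii), transferred through $F$ (using Landrock's results to identify $\Hom$ with $\underline{\Hom}$ --- a point your endomorphism-ring matching also glosses over, since stable equivalences only control homomorphisms modulo projectives, and composition factors of $F(X)$ are likewise only determined modulo projectives), together with $f(S_1)=2\alpha$, $f(S_4)=2\delta$, $f(S_5)=4$; these force the vertex-$Q$ summand $U$ of $F(X)$ to be $2\alpha\,2\gamma/4/2\alpha\,2\gamma$ rather than $2\alpha\,2\delta/4/2\alpha\,2\delta$, and only then does $2\gamma$ appear in $f(S_3)$.
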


\begin{proof}
As noted in the proof of {\bf\ref{TrivialSourceS5}},
$G$ has a maximal subgroup $M$ such that $M \cong 2.{\sf M}_{22}$
and $|G : M| = 100$.
By \cite[p.39]{Atlas} and
\cite[${\sf M}_{22}$ (mod 3)]{ModularAtlas}, we know that
$M$ has a $3$-block $\tilde B$
(which is called "Block 7" in \cite[${\sf M}_{22}$ (mod 3)]{ModularAtlas})
such that $\tilde B$ has a defect group $\tilde Q$ with
$\tilde Q \cong C_3$, where we can assume $\tilde Q = Q$.
Moreover, $\tilde B$ has an irreducible ordinary character
$\tilde\chi_{16}$ of degree $120$, and $\tilde B$
has a simple $kM$-module $\tilde T$ of dimension $120$
corresponding to $\tilde\chi_{16}$.
Now, it follows from \cite[Proposition 3.19]{DanzKuelshammer2009}
that $\tilde T$ is a trivial source module with vertex $Q$.
Hence the indecomposable summands of $X:=\tilde T{\uparrow}^G\cdot 1_A$
have a trivial source as well, and are $Q$-projective.
On the other hand, a computation in {\sf GAP} \cite{GAP} says that
$$ X \leftrightarrow
\tilde\chi_{16}{\uparrow}^G\cdot 1_A = 
\chi_{26} + \chi_{27} + \chi_{28} + \chi_{29} .$$
Therefore, {\bf\ref{DecompositionMatrixOf2HS}} yields that
$X = 2 \times S_1 + 2 \times {S_1}^* + 2 \times S_3$
as composition factors; note that this shows that $X$ is projective-free.
Recall that $\chi_{26} \leftrightarrow S_1$ and
$\chi_{28} \leftrightarrow S_1 + S_3$. 
Hence, it holds by {\bf\ref{Scott}},
{\bf\ref{TrivialSourceS1S2}} and
{\bf\ref{DecompositionMatrixOf2HS}} that, as $k$-spaces,
$$
{\mathrm{Hom}}_{kG}(S_1, X) \cong
{\mathrm{Hom}}_{kG}({S_1}^*, X) \cong
{\mathrm{Hom}}_{kG}(X, S_1) \cong
{\mathrm{Hom}}_{kG}(X, {S_1}^*) \cong k ,
$$
$$
{\mathrm{Hom}}_{kG}(S_4, X) =
{\mathrm{Hom}}_{kG}(S_5, X) =
{\mathrm{Hom}}_{kG}(X, S_4) =
{\mathrm{Hom}}_{kG}(X, S_5) = 0 .
$$
Moreover, it follows from 
\cite[II Lemma 2.7 and Corollary 2.8]{Landrock}, 
{\bf\ref{GreenCorrespondentF(S1)}} and {\bf\ref{Notation5}} that,
as $k$-spaces,
\begin{align*}
{\mathrm{Hom}}_{kN}(F(X), 2\alpha)
\ \cong \ 
{\underline{\mathrm{Hom}}}_{kN}(F(X), 2\alpha)
\ \cong \ 
{\underline{\mathrm{Hom}}}_{kG}(X, S_1)
\ \cong \ 
{\mathrm{Hom}}_{kG}(X, S_1) \ \cong \ k.
\end{align*}
Similarly, we get 
${\mathrm{Hom}}_{kN}(F(X), (2\alpha)^*)\cong k$.
Using {\bf\ref{GreenCorrespondentF(S4)}} in the above proof,
we obtain
$$ {\mathrm{Hom}}_{kN}(F(X), 2\delta) 
 = {\mathrm{Hom}}_{kN}(F(X), 4) 
 = {\mathrm{Hom}}_{kN}(2\delta, F(X)) 
 = {\mathrm{Hom}}_{kN}(4, F(X)) 
= 0 .$$

Then, let $Y$ be an indecomposable direct summand of $X$
with $S_1{\Big|}(Y/{\mathrm{rad}}(Y))$, hence $Y$ is non-projective.
This means that we can write 
$F(Y) = U \oplus ({\mathrm{proj}})$
for a non-projective indecomposable $kN$-module $U$ in $A_N$,
where by {\bf\ref{SourceVertex}} we infer that
$U$ is a trivial source $kN$-module with vertex $Q$,
since $\tilde T$ has vertex $Q$ and $Y$ is non-projective.
Moreover, from
$$ F(X) = F(Y) \oplus ({\mathrm{module}}) 
      = U \oplus ({\mathrm{proj}}) \oplus ({\mathrm{module}}), $$
{\bf\ref{GreenCorrespondentF(S1)}} and {\bf\ref{Notation5}},
we get
${\mathrm{Hom}}_{kN}(F(Y), 2\delta) = {\mathrm{Hom}}_{kN}(F(Y), 4) = 0$
and
\begin{align*}
{\mathrm{Hom}}_{kN}(U, 2\alpha)
\ \cong \
{\underline{\mathrm{Hom}}}_{kN}(U, 2\alpha)
& \ \cong \ 
{\underline{\mathrm{Hom}}}_{kN}(F(Y), 2\alpha) \\
& \ \cong \ 
{\underline{\mathrm{Hom}}}_{kG}(Y,S_1)
\ \cong \
{\mathrm{Hom}}_{kG}(Y,S_1)
\ \cong \
k.
\end{align*}
Thus, from {\bf\ref{TrivialSourceModulesInA_N}}(iii) we conclude that
$$
U \  = \ 
 \boxed{
  \begin{matrix} 2\alpha  \ \ \ 2\gamma   \\
                 4                            \\
                 2\alpha  \ \ \ 2\gamma
  \end{matrix}
       } .
$$

Since $U{\Big|} F(X)$ and $X$ is self-dual, by 
\cite[A.2.Lemma]{KoshitaniMuellerNoeske2011}
we have $U^*{\Big|}F(X)^* = F(X^*) = F(X)$ as well. 
Since $U$ is not self-dual, this yields that
\begin{align*}
F(X) 
& = \ U \oplus U^* \oplus ({\mathrm{module}}) \oplus ({\mathrm{proj}}) \\
& = \ 
 \boxed{
  \begin{matrix} 2\alpha  \ \ \ 2\gamma   \\
                 4                            \\
                 2\alpha  \ \ \ 2\gamma
  \end{matrix}
       } 
\, \bigoplus \,
 \boxed{
  \begin{matrix} (2\alpha)^*  \ \ \ 2\gamma   \\
                   \ \ 4                            \\
                 (2\alpha)^*  \ \ \ 2\gamma
  \end{matrix}
       } 
\, \bigoplus \,
   ({\mathrm{module}}) \, \bigoplus \, ({\mathrm{proj}}),
\end{align*}
Note that since $X$ is $Q$-projective, neither $S_1$ nor
${S_1}^*$ can possibly be a direct summand of 
$X$ by {\bf\ref{DecompositionMatrixOf2HS}}(ii). 
Hence 
it follows from the {\sl stripping-off method}, see
{\bf\ref{StrippingOffMethod}}, that
there is a subquotient module $Z$ of $X$ such that 
$Z = S_3 + S_3$ as composition factors, and such that
$$
F(Z) \ = \ V \, \oplus \, V^\ast \, \oplus \, 
           ({\mathrm{module}}) \, \oplus \, ({\mathrm{proj}}),
\quad\quad {\mathrm{where}} \  
   V \, = \,  
\boxed{
\begin{matrix}
   2\gamma & {\Big|} &    &         &  \\
           & {\Big|} &  4 & {\Big|} &  \\
           &         &    & {\Big|} & 2\gamma
\end{matrix} }
.$$
Since $F$ induces a stable equivalence
by {\bf\ref{Notation5}}, we conclude that $Z$ is
decomposable, that is $Z \cong S_3 \oplus S_3$, which implies that
$ F(S_3) \oplus F(S_3) \, = \, V \oplus V^\ast $,
since $F(S_3)$ is indecomposable by
{\bf\ref{Linckelmann}}(i).
Hence we infer that $V\cong V^\ast$ is indecomposable,
having Loewy and socle series
$$ V \, = \,
   \boxed{
      \begin{matrix} 2\gamma \\ 4 \\ 2\gamma
      \end{matrix} }.
$$
Note that (although we do not need this fact) the above analysis
also shows that 
$$ X \ = \
   \boxed{
      \begin{matrix} S_1 \\ S_3 \\ S_1
      \end{matrix} }
\ \bigoplus \
   \boxed{
      \begin{matrix} S_1^* \\ S_3 \\ S_1^*
      \end{matrix} }.
$$
\end{proof}

\section{Proof of main results}\label{proof}

\begin{Lemma}\label{match2delta2d}
Keeping the setting in {\bf\ref{2HScontainsA8}} fixed, we have 
$$ 2\delta{\downarrow}^N_{H'} = 1\delta \oplus 1\delta = 1d \oplus 1d
\ \text{ and } \
2\gamma{\downarrow}^N_{H'} = 1\gamma \oplus 1\gamma = 1c \oplus 1c .$$
\end{Lemma}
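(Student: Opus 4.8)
The plan is to combine the two independent descriptions of the restriction $2\delta\!\downarrow^N_{H'}$ that we already have in hand. On the one hand, Lemma~\ref{StructureOfOH2}(iii) tells us that $2\gamma\!\downarrow^N_{H'} = 1\gamma\oplus 1\gamma$ and $2\delta\!\downarrow^N_{H'} = 1\delta\oplus 1\delta$, where $\{1\gamma,1\delta\} = \{1c,1d\}$, but at that stage we cannot yet tell which of $2\gamma,2\delta$ matches which of $1c,1d$. On the other hand, the labelling of $1c$ and $1d$ is pinned down by the Green correspondence $f'$ with respect to $(G',P,H')$ (see Lemma~\ref{GreenCorrespondentA8}): $f'(28) = 1d$, and $f'(13)$ has Loewy/socle structure $1c$--$2$--$1c$. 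So the task reduces to identifying $2\delta\!\downarrow^N_{H'}$ with $1d\oplus 1d$ rather than $1c\oplus 1c$; the statement for $2\gamma$ then follows automatically.

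First I would recall from Lemma~\ref{GreenCorrespondentF(S4)} that the labelling of $2\gamma$ and $2\delta$ in $A_N$ is itself fixed only by the choice $f(S_4) = 2\delta$, $f(S_5) = 4$, where $f$ is the Green correspondence with respect to $(G,P,N)$. So the real content is to trace the simple $kG$-module $S_4$ (a trivial source module by Lemma~\ref{TrivialSourceS4}, of dimension $56$) through the configuration of groups in Table~\ref{2HScontainsA8table} and down to $H'$. Concretely, I would compute $S_4\!\downarrow_{H'}$ (or equivalently analyse the Green correspondent in a suitable intermediate group) and match its non-projective summand against $f'(28) = 1d$ using Lemma~\ref{StructureOfOH2}, which already identifies $1d$ inside $H'$. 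Since the whole point of embedding $G'\leqslant G$ and keeping $P\leqslant G'$ fixed is to make such a comparison legitimate, the crucial step is to verify that the identifications of $\{1c,1d\}$ coming from $G'$ (via $f'$) and from $G$ (via $f$ together with the restriction $\mathrm{Res}\!\downarrow^H_{H'}$ of Lemma~\ref{StructureOfOH2}) actually agree. This is where the compatibility of the local analysis, engineered in Section~\ref{**}, does its job.

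In more detail, the key steps, in order: (1) use Lemma~\ref{StableEquivalenceAandA_N}(ii) and Lemma~\ref{GreenCorrespondentF(S4)} so that $f(S_4) = 2\delta$, hence $2\delta$ is characterised as the Green correspondent in $N$ of the $56$-dimensional simple $kG$-module $S_4$; (2) restrict further from $N$ to $H'$ and compare with the data of Section~\ref{***}: by Lemma~\ref{GreenCorrespondentA8}, $f'(28) = 1d$, and $28\in A'$ is the $kG'$-analogue of $S_4$ in the Puig equivalence of Theorem~\ref{2HSandA8} (or, more elementarily, one identifies $28$ inside $G$ directly via the embedding and the explicit computation already alluded to in Remark~\ref{Computations}); (3) deduce $2\delta\!\downarrow^N_{H'} = 1d\oplus 1d$ and hence $1\delta = 1d$, $1\gamma = 1c$; (4) feed this back into Lemma~\ref{StructureOfOH2}(iii) to get $2\gamma\!\downarrow^N_{H'} = 1\gamma\oplus 1\gamma = 1c\oplus 1c$. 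The main obstacle is step~(2): one must make sure that the element of $\{1c,1d\}$ singled out by the $G$-side data is the \emph{same} one singled out by the $f'$-labelling on the $G'$-side, and not its swap under the outer automorphism of $H'$ described in Lemma~\ref{CharacterTableOfBprime}(ii). This is precisely the subtlety flagged in Remark~\ref{twoequivalences}(a), and it is resolved here not by abstract argument but by the explicit computation in the fixed realisation of $G'\leqslant G$ with $P$ fixed, exactly as indicated in Remark~\ref{Computations} (restrict the relevant representation from $G'$ to $H'$, compute its Green correspondent, and match).
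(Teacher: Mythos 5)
Your overall strategy is the paper's: pin down $1\delta$ versus $1d$ by tracing the trivial source module $S_4$ down to $H'$ in two ways, once through $N$ (where $f(S_4)=2\delta$ and $2\delta{\downarrow}^N_{H'}=1\delta\oplus 1\delta$ by {\bf\ref{StructureOfOH2}}) and once through $G'$, using the fixed configuration of {\bf\ref{2HScontainsA8}}. However, there are two genuine gaps in your execution. First, your main justification for matching on the $G'$-side --- that ``$28$ is the $kG'$-analogue of $S_4$ in the Puig equivalence of Theorem {\bf\ref{2HSandA8}}'' --- is circular: the proof of {\bf\ref{2HSandA8}} goes through {\bf\ref{PuigForAandA'}}, which uses precisely the present lemma. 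You cannot invoke that equivalence here.

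Second, the concrete comparison needs a step you never supply: one must show that $S_4{\downarrow}_{G'}$ is actually the \emph{direct sum} $28\oplus 28$, not merely that it has composition factors $2\times 28$. This is where the real work lies: the paper proves $\mathrm{Ext}^1_{kG'}(28,28)=0$ by transporting the question through the stable equivalence $\mathcal F'$ of {\bf\ref{StableEquivalenceInA8}} to $\mathrm{Ext}^1_{kH'}(1d,1d)=0$ (alternatively citing Siegel). Only then does the Green correspondence for $(G',P,H')$ give $S_4{\downarrow}_{H'}=1d\oplus 1d\oplus(\mathfrak Y(G',P,H')\text{-proj})$, which can be compared, summand by summand, with $S_4{\downarrow}_{H'}=1\delta\oplus 1\delta\oplus(\mathfrak Y(G,P,N)\text{-proj}){\downarrow}_{H'}$ obtained via $N$; the matching is then forced by comparing \emph{vertices} (the linear modules have vertex $P$, all other summands have strictly smaller vertex) and Krull--Schmidt. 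Your phrase ``match its non-projective summand against $f'(28)=1d$'' glosses over both points: $S_4{\downarrow}_{H'}$ has many non-projective summands, and without the splitting of $S_4{\downarrow}_{G'}$ you have no control over its vertex-$P$ part from the $G'$-side. Deferring instead to ``explicit computation'' as in {\bf\ref{Computations}} only identifies $1d=f'(28)$ inside $H'$; it does not by itself connect $1d$ to $2\delta$ unless you additionally carry out (and justify) the decomposition of $S_4{\downarrow}_{H'}$, which is exactly the argument sketched above.
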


\begin{proof}
By {\bf\ref{StructureOfOH2}} we have to show that $1\delta = 1d$.
In order to do so, we employ the $kG$-module $S_4$, for which
we first show that $S_4{\downarrow}_{G'} = 28 \oplus 28$:
By an explicit computation with {\sf GAP} \cite{GAP}
we know that $S_4{\downarrow}_{G'} = 2 \times 28$
as composition factors, see
{\bf\ref{DecompositionMatrixOf2HS}} and
{\bf\ref{CharactersOfA8}}.
Hence we are done as soon as we show that
${\mathrm{Ext}}^1_{kG'}(28, 28) = 0$. This in turn is seen as follows:
Since by {\bf\ref{StableEquivalenceInA8}} the functor
$\mathcal F'$ commutes with taking Heller translates
in the stable module categories, we have
$$
 {\mathrm{Ext}}_{kG'}^1(28, 28)
\    \cong \
{\mathrm{Ext}}_{kH'}^1(\mathcal F'(28), \mathcal F'(28))
\   \cong \
{\mathrm{Ext}}_{kH'}^1(1d, 1d) \ = \ 0,
$$
by making use of {\bf\ref{GreenCorrespondentA8}},
and the vanishing result in the proof of
{\bf\ref{TrivialSourceModulesInBprime}}.
(As an alternative, the vanishing of ${\mathrm{Ext}}^1_{kG'}(28, 28)$
can also be found in \cite[Appendix p.3115]{Siegel1991}.)

Now, on the one hand we get by {\bf\ref{GreenCorrespondentA8}} that
\begin{align*}
S_4{\downarrow}_{H'} 
=&
S_4{\downarrow}_{G'}{\downarrow}_{H'}
= (28 \oplus 28){\downarrow}_{H'}
\\
=& \Big(f'(28) \oplus f'(28) \oplus (\mathfrak Y(G',P,H'){\text{-proj}})\Big)
\\
=& 1d \oplus 1d \oplus (\mathfrak Y(G',P,H'){\text{-proj}}),
\end{align*}
see \cite[Chap.4, \S 4]{NagaoTsushima}.
On the other hand, we get from {\bf\ref{GreenCorrespondentF(S4)}} that 
\begin{align*}
S_4{\downarrow}_{H'}
=&
S_4{\downarrow}_{N}{\downarrow}_{H'}
= \Big( f(S_4) \oplus (\mathfrak Y(G,P,N){\text{-proj}}) \Big)
  {\downarrow}_{H'}
\\
=&
\Big( 2\delta \oplus (\mathfrak Y(G,P,N){\text{-proj}}) \Big)  
  {\downarrow}_{H'}
\\
=&
( 1\delta \oplus 1\delta)
\oplus \Big(\mathfrak Y(G,P,N){\text{-proj}} \Big){\downarrow}_{H'}.
\end{align*}
Therefore, by comparing the vertices of the indecomposables
showing up above and by Krull-Schmidt's theorem, 
we finally know that $1\delta = 1d$.
\end{proof}

\begin{Lemma}\label{PuigForAandA'}
The blocks $A$ and $A'$ are Morita equivalent
induced by an $(A,A')$-bimodule which is $\Delta P$-projective
and is a trivial source module as an $\mathcal O[G \times G']$-lattice.
\end{Lemma}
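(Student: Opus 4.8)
The plan is to realise the asserted equivalence as a composite of equivalences that are already at our disposal, to check that the resulting stable equivalence of Morita type carries every simple $A$-module to a simple $A'$-module, and then to apply Linckelmann's criterion \ref{Linckelmann}(ii); because every constituent of the composite is a \emph{splendid} equivalence in the sense of \ref{NotationEquivalences}, the bimodule inducing it will automatically be a $\Delta P$-projective trivial source module. Concretely, fixing $B=B_1$ as one of the two Fong--Reynolds correspondents of $A_N$ (the other being $B_3$), I would compose the four functors: the splendid stable equivalence of Morita type $F=\mathcal F_N\colon\mathrm{mod}\text{-}A\to\mathrm{mod}\text{-}A_N$ of \ref{StableEquivalenceAandA_N}; the Fong--Reynolds Puig equivalence $\mathrm{mod}\text{-}A_N\to\mathrm{mod}\text{-}B_1$ of \ref{FongReynolds}; the restriction Puig equivalence $\Res^H_{H'}\colon\mathrm{mod}\text{-}B_1\to\mathrm{mod}\text{-}B'$ of \ref{StructureOfOH2}(i); and the inverse of the splendid stable equivalence of Morita type $\mathcal F'\colon\mathrm{mod}\text{-}A'\to\mathrm{mod}\text{-}B'$ of \ref{StableEquivalenceInA8}. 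The composite is a stable equivalence of Morita type $\Phi\colon\mathrm{mod}\text{-}A\to\mathrm{mod}\text{-}A'$. Since $P$ is a common defect group of all five blocks involved, with $P\leqslant H'\leqslant H\leqslant N\leqslant G$ and $P\leqslant G'$, and since composites of splendid stable equivalences of Morita type are splendid, $\Phi$ is induced by an $(A,A')$-bimodule $\mathcal N$ which, regarded as a $k[G\times G']$-module, is $\Delta P$-projective and trivial source; we may and do take $\mathcal N$ to be the (unique) indecomposable summand of vertex $\Delta P$ of the composite bimodule.

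Next I would trace each simple $S_i$ through the chain. By \ref{GreenCorrespondentF(S1)}, \ref{GreenCorrespondentF(S4)} and \ref{GreenCorrespondentF(S3)} we have, in $\underline{\mathrm{mod}}\text{-}A_N$, that $F(S_1)\cong 2\alpha$, $F(S_2)\cong 2\beta$, $F(S_4)\cong 2\delta$, $F(S_5)\cong 4$, while $F(S_3)$ is the uniserial module with Loewy layers $2\gamma,4,2\gamma$. Applying the Fong--Reynolds equivalence and \ref{LocalStructureII}(iv), these pass to $1\alpha_1$, $1\beta_1$, $1\delta_1$, $2_{B_1}$, and the uniserial $B_1$-module with Loewy layers $1\gamma_1,2_{B_1},1\gamma_1$. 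Applying $\Res^H_{H'}$ and using \ref{StructureOfOH2}(ii)--(iv) together with \ref{match2delta2d} (which identifies $1\gamma=1c$ and $1\delta=1d$ by means of the fixed embedding $G'\leqslant G$), these become, up to interchanging $1a$ and $1b$, the modules $1a$, $1b$, $1d$, $2$, and the uniserial module with Loewy layers $1c,2,1c$. Finally, every simple $A'$-module has vertex $P$ by \ref{CharactersOfA8}(ii), so \ref{SourceVertex}(v) gives $\mathcal F'(X)\cong f'(X)$ in $\underline{\mathrm{mod}}\text{-}B'$ for each such $X$; hence by \ref{GreenCorrespondentA8} the functor $(\mathcal F')^{-1}$ returns $k_{G'}$, $7$, $28$, $35$ on $1a$, $1b$, $1d$, $2$, and---crucially, since the uniserial module with layers $1c,2,1c$ is precisely $f'(13)$---returns the simple module $13$ on it. Thus $\Phi$ matches $S_1,S_2,S_3,S_4,S_5$ with $k_{G'},7,13,28,35$ (up to interchanging $k_{G'}$ and $7$, reflecting the choice $B=B_1$ versus $B=B_3$), so in particular $\Phi$ carries every simple $A$-module to a simple $A'$-module.

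Since $\mathcal N$ is indecomposable and non-projective, \ref{Linckelmann}(i) shows $S_i\otimes_A\mathcal N$ is indecomposable and non-projective; being stably isomorphic to a simple module, it is therefore simple. As $A$ and $A'$ are indecomposable, self-injective and non-simple $k$-algebras, \ref{Linckelmann}(ii) then shows that $\mathcal N$ induces a Morita equivalence between $A$ and $A'$. Finally, lifting $\mathcal N$ to the unique trivial source $\mathcal O[G\times G']$-lattice $\widehat{\mathcal N}$ above it by \ref{Scott}(i), which is again $\Delta P$-projective, and using the standard fact that a bimodule projective on both sides and inducing a Morita equivalence after reduction modulo $\mathrm{rad}(\mathcal O)$ induces one over $\mathcal O$, we obtain the assertion.

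The step I expect to be the main obstacle is the bookkeeping for $S_3$: its image under $F$ is the non-simple uniserial module with layers $2\gamma,4,2\gamma$, so one has to carry this module intact through the Fong--Reynolds and restriction functors (legitimate, since both are genuine module-category equivalences) and then verify that it lands exactly on $f'(13)$, so that the final step produces a simple module. This hinges on the delicate vertex-$Q$ analysis of \ref{GreenCorrespondentF(S3)} and on \ref{match2delta2d}, both of which rest on the concrete realisation $\mathfrak A_8\hookrightarrow 2.{\sf HS}$ from \ref{2HScontainsA8} and on induction computations from the maximal subgroup $2.{\sf M}_{22}$; once those identifications are secured, the remainder of the argument is formal.
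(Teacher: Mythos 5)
Your proposal is correct and follows essentially the same route as the paper: it builds the same composite $\mathrm{mod}\text{-}A\to\mathrm{mod}\text{-}B_i\to\mathrm{mod}\text{-}B'\to\mathrm{mod}\text{-}A'$ (the paper's bimodule $\mathcal M\otimes_B B_i\otimes_{B'}\mathcal M'^{\vee}$), traces the simples using {\bf\ref{GreenCorrespondentF(S1)}}, {\bf\ref{GreenCorrespondentF(S4)}}, {\bf\ref{GreenCorrespondentF(S3)}}, {\bf\ref{StructureOfOH2}} and {\bf\ref{match2delta2d}}, identifies the image of $S_3$ with $\mathcal F'(13)$, and concludes by {\bf\ref{Linckelmann}}(ii). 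Your explicit remarks on taking the $\Delta P$-vertex summand and lifting to an $\mathcal O$-lattice via {\bf\ref{Scott}}(i) are only minor elaborations of what the paper leaves implicit.
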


\begin{proof}
First of all, $A$ and $A'$ are splendidly stable equivalent 
of Morita type by either of the $(A,A')$-bimodules
$$ \widetilde{\mathcal M}_i \, = \,
 {_A}(\mathcal M \otimes_B B_i \otimes_{B'}{\mathcal M'}^{\vee})_{A'} ,$$
where $\mathcal M$ and $\mathcal M'$ are the same as in
{\bf\ref{StableEquivalenceFor2HS}} and
{\bf\ref{StableEquivalenceInA8}}, respectively,
and $i=1,3$ by {\bf\ref{StructureOfOH2}}(i).
Hence, the following holds from
{\bf\ref{GreenCorrespondentA8}}, {\bf\ref{StableEquivalenceAandA_N}}(ii) 
as well as 
{\bf\ref{GreenCorrespondentF(S1)}}, {\bf\ref{GreenCorrespondentF(S4)}},
{\bf\ref{GreenCorrespondentF(S3)}},
{\bf\ref{StructureOfOH2}}(ii)--(iv) and {\bf\ref{match2delta2d}}:
\begin{center}
\begin{tabular}{c c c c c c c}
mod-$A$ & $\overset{\mathcal F}{\longrightarrow}$    
        & mod-$B_i$ 
        & $\overset{{\mathrm{Res}}{\downarrow}^H_{H'}}
        {\longrightarrow}$ & mod-$B'$    
        & $\overset{{\mathcal F'}^{-1}}{\longrightarrow}$  & mod-$A'$  \\
\hline
\\
$S_1$ & $\mapsto$ & $\boxed{1\alpha_i}$ & $\mapsto$ & 
        $\left\{\begin{array}{c}\boxed{1a}\\\boxed{1b}\rule{0em}{1.2em}\\
                \end{array}\right.$
      & $\mapsto$ & 
        $\left.\begin{array}{c}k_{G'}\\7\rule{0em}{1.2em}\\
               \end{array}\right\}$ 
\\ 
\\
$S_2=S_1^*$ & 
        $\mapsto$ & $\boxed{1\beta_i}$ & $\mapsto$ & 
        $\left\{\begin{array}{c}\boxed{1b}\\\boxed{1a}\rule{0em}{1.2em}\\
                \end{array}\right.$
      & $\mapsto$ & 
        $\left.\begin{array}{c}7\\k_{G'}\rule{0em}{1.2em}\\
               \end{array}\right\}$ 
\\
\\
$S_3$ & $\mapsto$ 
      & $\boxed{\begin{matrix}1\gamma_i \\ 2_{B_i} \\ 1\gamma_i\end{matrix}}$ 
      & $\mapsto$ 
      & $\boxed{\begin{matrix} 1c \\ 2 \\ 1c \end{matrix}}$
      & $\mapsto$ & $13$          
\\ \\
$S_4$ & $\mapsto$ & $\boxed{1\delta_i}$ & $\mapsto$ & $\boxed{1d}$ 
      & $\mapsto$ & $28$          
\\ \\
$S_5$ & $\mapsto$ & $\boxed{2_{B_i}}$ & $\mapsto$ & $\boxed{2}$ 
      & $\mapsto$ & $35$      
\end{tabular}
\end{center}
Note that, by {\bf\ref{TrivialSourceModulesInBprime}}(i),
the $B'$-module $\mathcal F'(13)$ is uniquely
(up to isomorphism) determined by its Loewy series;
hence we indeed have 
$$ {\mathrm{Res}}{\downarrow}^H_{H'}(\mathcal F(S_3))
   \cong \mathcal F'(13) .$$
Therefore we finally get that
$A$ and $A'$ are Morita equivalent
by {\bf\ref{Linckelmann}}(ii).
More precisely, we know also that
the Morita equivalence is given
by either of the bimodules $\widetilde{\mathcal M}_i$,
satisfying the properties desired.
\end{proof}

\begin{Remark}\label{J4problem}
A remark on the strategy employed in the proofs of 
{\bf\ref{match2delta2d}} and {\bf\ref{PuigForAandA'}} is in order:

(a)
To derive {\bf\ref{match2delta2d}}
we use the full strength of {\bf\ref{2HScontainsA8}}:
Indeed, using an embedding $H'\leqslant G'$ and the Green
correspondence $f'$, we have defined $1d$, 
see {\bf\ref{GreenCorrespondentA8}}, and similarly, 
using an embedding $H\leqslant N\leqslant G$ and the Green
correspondence $f$, we have defined $1\delta$
see {\bf\ref{GreenCorrespondentF(S4)}}.
But from that alone we would only be able to conclude
that $\{1\gamma,1\delta\}=\{1c,1d\}$, see {\bf\ref{StructureOfOH2}}.
Now only additionally using an embedding $G'\leqslant G$, entailing
a compatible embedding $H'\leqslant H$, we are able to conclude
as in the proof of {\bf\ref{match2delta2d}}, whose starting point
is restricting $S_4$ from $G$ to $G'$.

\medskip
(b)
In order to be able to proceed as in the proof of {\bf\ref{PuigForAandA'}}
we have to ensure that the functor induced by
$\widetilde{\mathcal M}_i$ maps simple $A$-modules to
simple $A'$-modules, which happens if and only if 
$$ {\mathrm{Res}}{\downarrow}^H_{H'}(\mathcal F(S_4))
   ={\mathrm{Res}}{\downarrow}^H_{H'}(1\delta_i)=1\delta
   \overset{!}{=}1d=\mathcal F'(28) ,$$
which is proved by the full strength of {\bf\ref{match2delta2d}}.
Without using the explicit configuration of groups in
{\bf\ref{2HScontainsA8}} we only know $1\delta\in\{1c,1d\}$.
(Note that this phenomenon has also been observed in
 \cite[6.14.Question]{KoshitaniKunugiWaki2008}.)
As an alternative we would have to proceed as follows:

By {\bf\ref{CharacterTableOfBprime}}(ii)
there is an outer automorphism of $H'$,
hence inducing a Morita self-equivalence of
of $kH'=k[P \rtimes D_8]$, interchanging $1c\leftrightarrow 1d$.
Twisting the bimodule $\widetilde{\mathcal M}_i$
accordingly then still yields a Morita equivalence between $A$ and $A'$.
But the outer automorphism applied necessarily changes
the structure of $kH'$, which is its own source algebra, 
as an interior $P$-algebra; in other words, the twisted 
bimodule then no longer is $\Delta P$-projective,
hence it does no longer induce a Puig equivalence between $A$ and $A'$.
Thus, as already indicated in {\bf\ref{twoequivalences}}(a) 
we would end up with the weaker statement in 
{\bf\ref{PuigForAandA'}} only saying that $A$ and $A'$ are Morita equivalent.
\end{Remark}

\bigskip\noindent
{\bf Proof of \ref{2HSandA8}}.
By {\bf\ref{NotationEquivalences}}, the assertion 
of {\bf\ref{PuigForAandA'}} is equivalent to saying that
$A$ and $A'$ are Puig equivalent.
Moreover, the two choices of $\widetilde{\mathcal M}_i$,
for $i=1,3$, account precisely for the two bijections between
the simple $A$- and $A'$-modules as described in 
{\bf\ref{twoequivalences}}(b).
\hfill {$\square$}

\bigskip\noindent
{\bf Proof of \ref{MainTheorem}}.
This follows from {\bf\ref{2HSandA8}}, since by 
\cite[Example 4.3]{Okuyama1997} and
\cite[Theorem 3]{Okuyama2000} the block algebras $A'$ and $B'$
are splendidly Rickard equivalent,
and by {\bf\ref{StructureOfOH2}}(i) and {\bf\ref{FongReynolds}}
the block algebras $B'$ and $B$ are Puig equivalent.
\hfill {$\square$}

\bigskip\noindent
{\bf Proof of \ref{ADGCfor2HSforAllPrimes}}.
This follows from {\bf\ref{MainTheorem}} and
{\bf\ref{BrouesConjectureFor2HS}}.
\hfill {$\square$}


\bigskip

{\begin{center}
{\bf Acknowledgements}
\end{center}}

\bigskip\noindent {\rm
A part of this work was done while the first author was
staying in RWTH Aachen University in 2010, 2011 and 2012. 
He is grateful to Gerhard Hiss for his kind hospitality.
For this research the first author was partially
supported by the Japan Society for Promotion of Science (JSPS),
Grant-in-Aid for Scientific Research 
(C)20540008, 2008--2010,
(B)21340003, 2009--2011,
and also (C)23540007, 2011--2014. }

\bigskip


\end{document}